\documentclass[preprint,12pt]{elsarticle}
\usepackage{cmap} 
           \usepackage[T2A]{fontenc}   
           \usepackage[utf8]{inputenc}
           \usepackage[russian,english]{babel}

\usepackage{a4wide,amsfonts, amsmath, amscd}
\usepackage[psamsfonts]{amssymb}
\usepackage{hyperref}
\usepackage{amssymb}
 \usepackage{graphicx}
\usepackage{epsfig}

\usepackage{amsthm}

\newcommand{\sysn}{\left\{\begin{array}{rcl}}
\newcommand{\sysk}{\end{array}\right.}

\newcommand{\ra}{\rangle}
\newcommand{\la}{\langle}

\renewcommand{\le}{\leqslant}
\renewcommand{\ge}{\geqslant}

\def\si{\sigma}
\def\de{\delta}
\def\es{\varnothing}
\def\om{\omega}
\def\cF{\mathcal{F}}
\def\cQ{\mathcal{Q}}

\def\cP{\mathcal{P}}
\def\la{\lambda}
\def\ot{\mathbf{2}}
\def\ga{\gamma}
\def\al{\alpha}
\def\rarr{\Rightarrow}
\def\larr{\Leftarrow}
\def\lrarr{\Leftrightarrow}

\def\De{\Delta}
\def\Si{\Sigma}

\usepackage{xcolor}

\def\R{\mathbb{R}}
\def\Q{\mathbb{Q}}
\def\restr#1#2{#1 \upharpoonright #2}
\def\:{\colon}

\newtheorem{theorem}{Theorem}[section]
\newtheorem{lemma}[theorem]{Lemma}

\newtheorem{proposition}[theorem]{Proposition}

\newtheorem{corollary}[theorem]{Corollary}

\theoremstyle{definition}
\newtheorem{example}[theorem]{Example}
\newtheorem{definition}[theorem]{Definition}
\newtheorem{remark}[theorem]{Remark}
\newtheorem{question}[theorem]{Question}

\def\U{\mathcal{U}}

\journal{...}

\begin{document}

\title{Lindel\"{o}f-type properties of spaces of Baire-one functions}

\author{Alexander V. Osipov}
\address{Krasovskii Institute of Mathematics and Mechanics, \\ Ural Federal
 University, Yekaterinburg, Russia}
\ead{OAB@list.ru}

\author{Evgenii Reznichenko}
\address{Department of General Topology and Geometry, Mechanics and Mathematics Faculty, M.V.Lomonosov  Moscow State University, Leninskie  Gory 1, Moscow, 199991, Russia}
\ead{erezn@inbox.ru}

\begin{abstract} In this paper, we investigate Lindel\"{o}f-type properties of spaces of Baire-one functions equipped with the topology of pointwise convergence.
In particular, we investigate the properties of normality, extent and realcompactness for the space $B_1(X)$ of Baire-one real-valued functions. We also study when $B_1(X)$ is realcompact: we show that this is the case for every hereditarily Baire (in particular, every Polish) separable metrizable space $X$ and for every Luzin set $X$. The paper provides answers to two Teng's questions. The paper raises open questions.
\end{abstract}

\begin{keyword}   Baire-one function \sep  extent \sep realcompactness \sep Lindel\"{o}f property \sep normality \sep function spaces \sep $nCM$-spaces
\MSC[2010] 54C35 \sep 54C40
\end{keyword}

\maketitle


\section{Introduction}

Recall that a space $X$ is called {\it Lindel\"{o}f}  if $X$ is a regular space and every open cover of $X$ has a countable subcover of $X$. Investigating the Lindel\"{o}f property for a function space is a difficult problem. For instance, the question of whether space $C_p(X)$  is Lindelöf has remained unsolved for over 50 years.

In 1987, Pestryakov  investigated cardinal-valued functions on the space of Baire-one functions \cite{pes1,pes2} and, in particular, studied the Lindel\"{o}f property of the space $B_1(X)$. Various topological properties of spaces of Baire-one functions were studied in \cite{Gab1, GOR26, os}. More specifically, Pestryakov  investigated the following questions:

\medskip

{\it When $B_1(X)$ is Lindel\"{o}f?
In particular, is there an uncountable (separable metrizable) space $X$ for which the space $B_1(X)$ is Lindel\"{o}f?

\medskip

Is there an uncountable (separable metrizable) space $X$ for which the space $B_1(X)$ is normal and realcompact?}

\medskip

These questions remain open.

\medskip

Pestryakov obtained a number of necessary conditions for space $X$ under which space $B_1(X)$ is Lindel\"{o}f (see Proposition \ref{pr1}).

Note that the Lindel\"{o}f property for an arbitrary topological space $X$ implies that $X$ has countable extent, is normal, and is realcompact.

 Since Pestryakov's questions turned out to be quite difficult for us, we  study the properties that follow from  Lindel\"{o}f property of $B_1(X)$  such as normality, countable extent, and realcompactness.
 In investigating the question of the normality of space $B_1(X)$, we obtained answers to Teng's questions (\cite{Pea}, Problems L41 and L42 and in \cite{Problem}):

 (L41) {\it Let $X$ be a fortissimo space and $p$ the particular point of $X$. Let $Y=X\setminus \{p\}$. Is $C_p(Y|X)$ normal?}

 (L42) {\it With $X$ and $Y$ as in L41, is $C_p(Y|X)$ homeomorphic to the $\Sigma$-product of $|X|$-many real lines?}

In Example \ref{exa:norm:2} and Remark \ref{r:teng}, we show that the answers to Teng's problems  are negative.


In Theorem \ref{t:pr:1} we obtain some strengthening of the properties obtained by Pestryakov in \ref{pr1}. Namely, if a space $B_1(X)$ is Lindel\"{o}f for some space $X$, then
$X$ is a perfect zero-dimensional space, any closed subset of $X$ is a $CZ$-set, $s(X)=\aleph_0$, and any continuous image of  $X$ is $k$-scattered.

In Proposition \ref{c:enba:2} we show that if $B_1(X)$ is normal then $B_1(X)$ has countable extent; together with Lemma \ref{l:cantor:2} this implies that $X$ is an $nCM$-space. In Theorem \ref{t:en:1} we prove a similar statement for spaces of (functionally) Baire-one maps with values in a separable metrizable space: if such a space is weakly normal, then it has countable extent.
We also investigate when $B_1(X)$ is realcompact. We show that $B_1(X)$ is realcompact for every hereditarily Baire (in particular, every Polish) separable metrizable space $X$, for every Luzin set $X\subset\R$, and for every $\si$-compact metrizable space $X$ (Corollary \ref{t:rc:1} and the remark after it). On the other hand, if $X$ is a $\la$-set which is not a $Q$-set, then $B_1(X)$ is not realcompact (Proposition \ref{p:lambdaY}); such sets exist under CH or MA, but we do not know whether they exist in ZFC. We also do not know whether $B_1(X)$ is realcompact for every Suslin (analytic) separable metrizable space $X$ (Question \ref{q:analytic}).

Interesting results have been obtained for classical topological spaces.

(1) If $X\subset \mathbb{R}$ is a Luzin set then the space $B_1(X)$ is realcompact (Corollary \ref{t:rc:1}).

(2) If $K$ is a Cantor discontinuum then the extent of the space of all indicator Baire-one functions $f\colon K\to\{0,1\}$ is equal to the continuum (Theorem \ref{t:sp:n:1}).

(3) If $X$ is a Bernstein set then $B_1(X)$ is not normal and the extent of $B_1(X)$ is equal to the continuum (Theorem \ref{t:bern}). Thus, for any  Bernstein set $X\subset \R$ the space $B_1(X)$ is realcompact, but not normal (Corollary \ref{c:bern}).

Pestryakov proved that if $B_1(X)$ is Lindel\"{o}f for some space $X$ then {\it for any continuous function $f:X\rightarrow \mathbb{R}$ the space $f(X)$ contains no uncountable compact subspace.}
We show that this property of the space $X$  is equivalent to being $nCM$-space (Remark \ref{r:cantor}). Thus, we separately investigate $nCM$-spaces (see Section \ref{sec:ncm}) as a necessary condition under which $B_1(X)$ is Lindel\"{o}f.

\section{Preliminaries}

\subsection{Main definitions}

Throughout this paper,  all spaces are assumed to be Tychonoff.

A real-valued function $f$ on a space $X$ is a {\it Baire-one
function} (or a {\it function of the first Baire class}) if $f$ is
a pointwise limit of a sequence of continuous functions on $X$.
Let $B_1(X)$ denote the space of all Baire-one real-valued
functions on a space
 $X$ with the topology of pointwise convergence.
 It is well known that the space $B_1(X)$ forms a vector space which
contains the space of all continuous functions $C(X)$ and which is
closed with respect to the uniform convergence. Moreover, $f\cdot
g$ and $\max(f,g)$ are Baire-one functions whenever $f,g\in
B_1(X)$.

For $f\in\R^X$, a finite set $K\subset X$ and $\varepsilon>0$ we put
$[f,K,\varepsilon]=\{g\in \R^X: |g(x)-f(x)|<\varepsilon$ for $x\in K\}$ and $\langle f,x,\varepsilon\ra=[f,\{x\},\varepsilon]$.

 We recall that a subset of $X$ that is the
 complete preimage of zero for a certain function from~$C(X)$ is called a zero-set.
A subset $O\subseteq X$  is called  a cozero-set (or functionally
open) of $X$ if $X\setminus O$ is a zero-set of $X$. It is easy to
check that zero sets are preserved by finite unions and countable
intersections. Hence cozero sets are preserved by finite
intersections and countable unions.

Countable unions of zero sets
will be denoted by $Zer_{\sigma}$ (or $Zer_{\sigma}(X)$),
countable intersection of cozero sets by $Coz_{\delta}$ (or
$Coz_{\delta}(X)$). It is easy to check that $Zer_{\sigma}$-sets
are preserved by countable unions and finite intersections. Note
that if $Y\subset X$ and $A\in Coz_{\delta}(X)$ then $A\cap Y\in
Coz_{\delta}(Y)$. Note also that any zero set is $Coz_{\delta}$
and any cozero-set is $Zer_{\sigma}$.
If $g\colon X\to Y$ is continuous, then $g^{-1}(A)\in Zer_\si(X)$ for $A\in Zer_\si(Y)$ and $g^{-1}(A)\in Coz_\de(X)$ for $A\in Coz_\de(Y)$.

Let $Y$ be a space. Denote by $B_1(X,Y)$ the set of all mappings $f$ from $X$ into $Y$ such that $f$ is the pointwise limit of a sequence of functions from $C(X,Y)$.

Denote by $K_1(X,Y)$ the set of all mappings $f$ from $X$ into $Y$ such that $f^{-1}(U)\in Zer_{\sigma}(X)$ for any cozero set $U\subset Y$.

It is well known that $f$ is
of the first Baire class if and only if $f^{-1}(U)\in
Zer_{\sigma}$ for every open $U\subseteq \mathbb{R}$ (see Exercise
3.A.1 in \protect\cite{lmz1}).
It follows that $B_1(X):=B_1(X,\R)=K_1(X,\R)$.

A subset $A$ of a space $X$ is called a {\it $CZ$-set}, if $A\in Coz_{\delta}(X)\cap Zer_{\sigma}(X)$.
Any zero set and cozero set is a $CZ$-set. Finite unions and finite intersections of $CZ$-sets, and complements of $CZ$-sets, are $CZ$-sets.

Denote by $\ot=\{0,1\}$. A function $f: X\rightarrow \ot$  belongs to $K_1(X,\ot)$ if and only if $f^{-1}(0)$ and $f^{-1}(1)$ are $CZ$-sets. Note that $K_1(X,\ot)=\{ f\in B_1(X): f(X)\subset\ot\}$; in particular, $K_1(X,\ot)$ is a closed subspace of $B_1(X)$.

Note also that $B_1(X, \ot)\subseteq K_1(X,\ot)$, but there are spaces $X$ such that $B_1(X, \ot)$ is trivial. For instance, if $X$ is a connected space then $B_1(X, \ot)$ consists only of the two constant functions.

\subsection{Auxiliary lemmas}

\begin{lemma}[gluing]\label{l:glue}
Let $M$ be a $CZ$-set in $X$, $Y$ a space and $f,g\in K_1(X,Y)$. Then the function $h$, $h\restriction M=f\restriction M$, $h\restriction (X\setminus M)=g\restriction (X\setminus M)$, belongs to $K_1(X,Y)$.
\end{lemma}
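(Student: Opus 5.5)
Unwinding the definition of $K_1(X,Y)$ reduces the claim to a preimage computation. Fix a cozero set $U\subset Y$. We must show that $h^{-1}(U)\in Zer_\si(X)$. Directly from the definition of $h$,
$$h^{-1}(U)=\bigl(M\cap f^{-1}(U)\bigr)\cup\bigl((X\setminus M)\cap g^{-1}(U)\bigr).$$
Since $f,g\in K_1(X,Y)$, the sets $f^{-1}(U)$ and $g^{-1}(U)$ belong to $Zer_\si(X)$.

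Since $M$ is a $CZ$-set, $M\in Zer_\si(X)$. Its complement $X\setminus M$ is also a $CZ$-set, as noted in the preliminaries: the complement of a $Coz_\de$-set is $Zer_\si$ and vice versa. Hence $X\setminus M\in Zer_\si(X)$ as well.

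It remains to use the closure properties of $Zer_\si(X)$ recorded earlier, namely that it is preserved by finite intersections and countable unions. Then $M\cap f^{-1}(U)$ and $(X\setminus M)\cap g^{-1}(U)$ lie in $Zer_\si(X)$, and so does their union $h^{-1}(U)$. As $U$ was an arbitrary cozero subset of $Y$, this gives $h\in K_1(X,Y)$.

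There is no real obstacle here. The only point needing care is that both $M$ and its complement must be $Zer_\si$, and this is exactly why the hypothesis requires $M$ to be a $CZ$-set rather than merely a $Zer_\si$-set. Finite intersections of $Zer_\si$-sets are $Zer_\si$ because $\bigl(\bigcup_n A_n\bigr)\cap\bigl(\bigcup_m B_m\bigr)=\bigcup_{n,m}(A_n\cap B_m)$ and zero sets are closed under finite intersections; I would cite this as already stated in the preliminaries.
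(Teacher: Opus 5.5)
Your proof is correct and follows the paper's argument exactly: the same decomposition $h^{-1}(U)=\bigl(M\cap f^{-1}(U)\bigr)\cup\bigl((X\setminus M)\cap g^{-1}(U)\bigr)$. It then uses the same two facts, that $M$ and $X\setminus M$ are both $Zer_\sigma$-sets and that $Zer_\sigma(X)$ is closed under finite intersections and countable unions.
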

\begin{proof}
For a cozero set $U\subset Y$, $h^{-1}(U)=(f^{-1}(U)\cap M)\cup(g^{-1}(U)\cap (X\setminus M))$. Since $M$ and $X\setminus M$ are $Zer_\si$-sets and $Zer_\si$ is closed under finite intersections and countable unions, $h^{-1}(U)\in Zer_\si(X)$.
\end{proof}

\begin{lemma}[factorization]\label{l:fact}
Let $Y$ be a separable metrizable space and $f\in K_1(X,Y)$. Then there are a continuous map $\varphi\colon X\to [0,1]^\om$ and a map $h\colon \varphi(X)\to Y$ such that $f=h\circ\varphi$ and $h^{-1}(U)\in Zer_\si(\varphi(X))$ for every open $U\subset Y$.
In particular, for every nonempty zero set $F\subset X$ there is a nonempty zero set $G\subset F$ such that $f\restriction G$ is constant.
\end{lemma}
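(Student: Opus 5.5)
The plan is to use a countable base of $Y$ and encode the relevant $Zer_\si$-sets by countably many continuous functions. Fix a countable base $\{U_n:n\in\om\}$ of $Y$. Since $Y$ is metrizable, every open set is a cozero set, so each $f^{-1}(U_n)\in Zer_\si(X)$. Write
\[
f^{-1}(U_n)=\bigcup_{m\in\om} Z_{n,m}, \qquad Z_{n,m}=g_{n,m}^{-1}(0),
\]
with $g_{n,m}\colon X\to[0,1]$ continuous. Enumerate the pairs $(n,m)$ by $\om$ and let $\varphi=(g_{n,m})_{n,m}\colon X\to[0,1]^\om$, which is continuous.

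Next I check that $f$ factors through $\varphi$. Suppose $\varphi(x)=\varphi(x')$. Then $x$ and $x'$ lie in exactly the same sets $Z_{n,m}$, hence in the same sets $f^{-1}(U_n)$. So $f(x)$ and $f(x')$ lie in the same basic open sets, and since $Y$ is $T_1$ (indeed Hausdorff) we get $f(x)=f(x')$. Therefore $h\colon\varphi(X)\to Y$, $h(\varphi(x))=f(x)$, is well defined and $f=h\circ\varphi$.

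Now let $W\subset[0,1]^\om$ be the zero set of the $(n,m)$-th coordinate projection $\pi_{n,m}$. Then $Z_{n,m}=\varphi^{-1}(W)$, and $\varphi(Z_{n,m})=W\cap\varphi(X)$, which is a zero set in $\varphi(X)$. Moreover $f^{-1}(U_n)$ is $\varphi$-saturated, because it equals $\varphi^{-1}(h^{-1}(U_n))$. This gives
\[
h^{-1}(U_n)=\varphi(f^{-1}(U_n))=\bigcup_m \varphi(Z_{n,m})\in Zer_\si(\varphi(X)).
\]
An arbitrary open $U\subset Y$ is a countable union of basic sets $U_n$. Since $Zer_\si$ is closed under countable unions, $h^{-1}(U)\in Zer_\si(\varphi(X))$. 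The only subtle point in this part is the saturation, which is exactly what makes images of preimages behave; it follows from the factorization.

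For the ``in particular'' part, let $F=u^{-1}(0)\neq\es$ be a zero set. I repeat the construction with $u$ added as an extra coordinate of $\varphi$; the argument above is unaffected. Then $F=\varphi^{-1}(F')$ for a closed $F'\subset M:=\varphi(X)$, and $F'\cap M=\varphi(F)$ is nonempty.

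The space $M$ is separable metrizable, and $h\colon M\to Y$ is Baire-one in the sense that preimages of open sets are $F_\si$ in $M$; in a metrizable space zero sets coincide with closed sets. I restrict $h$ to the nonempty closed set $C=\varphi(F)$ of $M$. It suffices to find a nonempty relatively open subset $V$ of $C$ and a nonempty closed set $G'\subset V$ on which $h$ is constant. Then $G=\varphi^{-1}(G')\subset F$ is a nonempty zero set of $X$, being the preimage of a zero set under a continuous map, and $f\restriction G$ is constant.

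This last step is the main obstacle, since $C$ need not be Baire and continuity points of $h\restriction C$ need not exist. Instead I use a countable pigeonhole argument. Fix a metric on $Y$. For each $k$, cover $Y$ by countably many open balls $B$ of radius $1/k$. Then $C=\bigcup_B (h^{-1}(B)\cap C)$, and each set in this union is a countable union of closed sets.

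Taking a closed set $C_1\subset h^{-1}(B_1)\cap C$ of diameter $<1$ in $M$ with $C_1\ne\es$, I continue inside $C_1$ to get nonempty closed sets $C_1\supset C_2\supset\cdots$ with $h(C_k)\subset B_k$, where $B_k$ has radius $1/k$. Shrinking the sets so that their diameters tend to $0$ does not by itself give a nonempty intersection, because $M$ need not be complete.

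So I instead aim for a single point directly: any nonempty closed $G'=\{p\}$ with $p\in C$ works, since a point is a zero set in the metrizable space $M$ and $h$ is trivially constant on it. Thus the ``in particular'' claim reduces to taking $G=\varphi^{-1}(\varphi(x))\cap F$ for any $x\in F$. This set is a zero set because it is the preimage of a point under $\varphi$ intersected with $F$. It is nonempty, and $f$ is constant on it because $f=h\circ\varphi$. This trivializes the obstacle, and I would present it that way.
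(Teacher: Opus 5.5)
Your proof is correct and is essentially the paper's argument. It uses the same countable base, the same coordinate functions (your $g_{n,m}$) assembled into $\varphi$, the same saturation identity $h^{-1}(U_n)=\varphi(f^{-1}(U_n))$, and, for the second part, the same choice $G=F\cap\varphi^{-1}(\varphi(x))$; the detour through adding $u$ as a coordinate, restricting $h$ to $\varphi(F)$, and the pigeonhole/Baire-category discussion is unnecessary and should be cut from the write-up.
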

\begin{proof}
Let $\{U_n:n\in\om\}$ be a base of $Y$. Every $U_n$ is a cozero set, hence $f^{-1}(U_n)=\bigcup_k \psi_{n,k}^{-1}(0)$ for some continuous $\psi_{n,k}\colon X\to[0,1]$. Let $\varphi=(\psi_{n,k})_{n,k\in\om}\colon X\to [0,1]^{\om\times\om}\cong[0,1]^\om$. If $\varphi(x)=\varphi(x')$, then $x\in f^{-1}(U_n)\lrarr x'\in f^{-1}(U_n)$ for all $n$, hence $f(x)=f(x')$. Thus $h=f\circ\varphi^{-1}$ is well defined on $\varphi(X)$ and
$h^{-1}(U_n)=\varphi(f^{-1}(U_n))=\bigcup_k\{t\in\varphi(X): t_{n,k}=0\}\in Zer_\si(\varphi(X))$. Every open $U\subset Y$ is a union of countably many $U_n$, so $h^{-1}(U)\in Zer_\si(\varphi(X))$.

If $F$ is a nonempty zero set and $x\in F$, then $G=F\cap\varphi^{-1}(\varphi(x))$ is a nonempty zero set and $f$ is constant on $G$.
\end{proof}

For $x\neq y$ in $\ot^\om$ let $\De(x,y)=\min\{n\in\om: x(n)\neq y(n)\}$.

\begin{lemma}\label{l:cantor:1}
For $y\in\ot^\om$ define $f_y\colon \ot^\om\to\om$ by $f_y(y)=0$ and $f_y(x)=\De(x,y)+1$ for $x\neq y$. Then $D=\{f_y:y\in\ot^\om\}\subset B_1(\ot^\om)$ is a closed discrete subset of $\R^{\ot^\om}$ of cardinality~$\mathfrak c$.
\end{lemma}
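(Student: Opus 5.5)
We need three things: each $f_y$ is Baire-one; $y\mapsto f_y$ is injective (so $|D|=\mathfrak c$); and $D$ is closed and discrete in $\R^{\ot^\om}$.

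\emph{Baire-one.} For each $k$ the set $\{x: f_y(x)=k+1\}=\{x:\De(x,y)=k\}$ is clopen in $\ot^\om$. It consists of those $x$ that agree with $y$ on $\{0,\dots,k-1\}$ and differ at $k$. Put $g_n(x)=f_y(x)$ if $x\restriction n\neq y\restriction n$, and $g_n(x)=0$ otherwise. Each $g_n$ is locally constant, since it is determined by $x\restriction n$, hence continuous. The sequence $g_n$ converges pointwise to $f_y$: for $x\neq y$ we eventually have $x\restriction n\neq y\restriction n$, and $g_n(y)=0$ for all $n$. So $f_y\in B_1(\ot^\om)$. Injectivity is immediate: $f_y(y)=0$, while $f_z(y)\ge 1$ for $z\neq y$.

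\emph{Closed discrete.} It suffices to show that every $g\in\R^{\ot^\om}$ has a neighborhood meeting $D$ in at most one point. I will use the two-point neighborhoods $[g,\{a,b\},1/2]$. Membership of $f_y$ in such a set forces $f_y(a)$ and $f_y(b)$ to be the integers nearest to $g(a)$ and $g(b)$, so the task reduces to a combinatorial fact about the values of the $f_y$.

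Case 1: $g$ does not take an integer value within $1/2$ at some point. Then a one-point neighborhood misses $D$ entirely, since the $f_y$ take values in $\om$. So assume that, after passing to the nearest integer, $g(x)=m_x\in\om$ is what a nearby $f_y$ must satisfy at each point $x$.

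Case 2: some point $a$ has $m_a=0$. Then $f_y(a)=0$ forces $y=a$, so $\langle g,a,1/2\ra$ contains at most $f_a$.

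Case 3: $m_x\ge1$ everywhere. Pick any $a$; the condition $f_y(a)=m_a$ means $\De(a,y)=m_a-1=k$. This determines $y\restriction(k+1)$, namely $a\restriction k$ followed by $1-a(k)$. The set $C$ of such $y$ is a clopen cylinder, which is infinite, so one point does not suffice; I will take a second point. Let $b\in C$ be any point. For $y\in C$ with $y\neq b$ we have $f_y(b)=\De(b,y)+1\ge k+2$, and $f_b(b)=0$. So if $m_b=0$ we are in Case 2. Otherwise the value $f_y(b)$ pins down $\De(b,y)$ but not $y$, and a finite set of points may never pin down $y$, because a finite set $K$ fixes only finitely many coordinates.

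This suggests a limit argument in Case 3. Since each $f_y$ is integer-valued and large near $y$, I suspect some $g$ in the closure behaves like ``$y$ at infinity''. Take $y_n\to z$ with the $y_n$ distinct. Then $f_{y_n}(x)\to f_z(x)$ for all $x\neq z$, and $f_{y_n}(z)\to\infty$. So there is no accumulation point in $\R^{\ot^\om}$: any candidate $g$ would need $g(z)=\infty$. The proof must formalize this. Suppose $g\in\overline{D\setminus\{g\}}$ with $g$ real-valued. Choose $a$ as in Case 3, giving the cylinder $C$. Points $f_y$ close to $g$ at finitely many coordinates $K$ correspond to $y$ in $C$ lying in a cylinder determined by $K$; as $K$ grows, these $y$ converge by compactness to some $z$. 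The values near $z$ then blow up, contradicting $g(z)<\infty$.

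Concretely, I will show directly that for each $g$ there is a point $z$ such that $\langle g,z,1/2\ra$ together with finitely many further points isolates at most one $f_y$. The main obstacle is making this compactness argument rigorous: picking $z$ as the limit of the candidates, then using the single coordinate $z$, where all $y$ close to $z$ but different from $z$ have $f_y(z)>g(z)+1$. Only finitely many cylinder levels remain, and these are handled by finitely many additional points.
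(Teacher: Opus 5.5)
\paragraph{What is correct.} Your proof that $f_y\in B_1(\ot^\om)$ via the locally constant approximants $g_n$ is correct. It checks the definition directly, whereas the paper observes that the level sets of $f_y$ are clopen or equal to $\{y\}$, hence $Zer_\si$. Injectivity is argued exactly as in the paper. Your Case 2 gives discreteness in the same way as the paper's neighbourhood $\langle f_y,y,1\rangle$.

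\paragraph{The gap.} Closedness, i.e.\ your Case 3, is not proved. You offer only a heuristic and explicitly leave ``making this compactness argument rigorous'' undone. The sequence computation $f_{y_n}(z)\to\infty$ settles nothing by itself, since $\R^{\ot^\om}$ is not first countable and closure there is not detected by sequences. Your worry that finitely many points can never pin down $y$ is a red herring. In Case 3 you do not need to pin down $y$; you need the set of candidates to become empty.

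\paragraph{The missing step.} In Case 3 put $S_x=\{y: f_y(x)=m_x\}=\{y\ne x:\De(x,y)=m_x-1\}$. Then $f_y\in\langle g,x,1/2\rangle$ iff $y\in S_x$. Each $S_x$ is a clopen cylinder, and $x\notin S_x$ because $f_x(x)=0<m_x$. Hence $\bigcap_{x\in\ot^\om}S_x=\es$. Compactness of $\ot^\om$ then gives a finite $K$ with $\bigcap_{x\in K}S_x=\es$, that is, $[g,K,1/2]\cap D=\es$. Any ``limit of candidates'' $z$ would lie in every closed $S_x$, in particular in $S_z$, which is the same contradiction. So there are no leftover ``cylinder levels'' to handle with additional points.

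\paragraph{The paper's version.} The paper phrases the same compactness with nets. If $f_{y_\lambda}\to g$, pass to a subnet with $y_\lambda\to y^*$. If $y_\lambda\ne y^*$ along a subnet, then $f_{y_\lambda}(y^*)=\De(y^*,y_\lambda)+1\to\infty$, which is impossible. So $y_\lambda=y^*$ cofinally and $g=f_{y^*}\in D$.
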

\begin{proof}
Every set $\{x: \De(x,y)=n\}$ is clopen and $\{y\}$ is closed, hence $f_y^{-1}(U)\in Zer_\si(\ot^\om)$ for every $U\subset\R$, so $f_y\in B_1(\ot^\om)$.
Since $f_y(y)=0$ and $f_z(y)\geq 1$ for $z\neq y$, the neighbourhood $\la f_y,y,1\ra$ meets $D$ only in $f_y$; thus $D$ is discrete and $|D|=\mathfrak c$.
Let $(f_{y_\la})_\la$ be a net converging in $\R^{\ot^\om}$ to $g$. Passing to a subnet we may assume that $y_\la\to y^*$. If $y_\la\neq y^*$ for all $\la$ of a subnet, then $f_{y_\la}(y^*)=\De(y^*,y_\la)+1\to\infty$, which is impossible. Hence $y_\la=y^*$ for cofinally many $\la$, and $g=f_{y^*}\in D$.
\end{proof}

\begin{lemma}\label{l:cantor:2}
Assume that there is $f\in C(X)$ such that $f(X)$ contains a subset homeomorphic to the Cantor set. Then $B_1(X)$ contains a closed discrete subset of cardinality $\mathfrak c$; in particular $e(B_1(X))\geq\mathfrak c$.
\end{lemma}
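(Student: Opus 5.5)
Let $K\subset f(X)$ be homeomorphic to the Cantor set and fix a homeomorphism $\psi\colon K\to\ot^\om$. The plan is to pull the closed discrete family $D=\{f_y:y\in\ot^\om\}$ of Lemma \ref{l:cantor:1} back to $X$ along $f$. This works in two steps: first extend each $f_y$ from $K$ to a Baire-one function on $\R$, then compose with $f$.

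For the extension, define $\tilde f_y\colon\R\to\R$ by $\tilde f_y(t)=f_y(\psi(t))$ for $t\in K$ and $\tilde f_y(t)=0$ for $t\notin K$. Since $K$ is closed in $\R$, it is a zero set, and $\R\setminus K$ is a cozero set, so both are $CZ$-sets. Each $f_y\circ\psi$ is in $B_1(K)$. We check that $\tilde f_y\in B_1(\R)=K_1(\R,\R)$. For open $U\subset\R$, the set $\tilde f_y^{-1}(U)$ equals $(f_y\circ\psi)^{-1}(U)$, possibly together with $\R\setminus K$ when $0\in U$. A $Zer_\si(K)$-set is a countable union of closed subsets of $K$, hence of closed subsets of $\R$, so it lies in $Zer_\si(\R)$. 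The set $\R\setminus K$ is open in $\R$ and therefore $Zer_\si$. Hence $\tilde f_y^{-1}(U)\in Zer_\si(\R)$, in the spirit of Lemma \ref{l:glue}.

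Next put $g_y=\tilde f_y\circ f$. Preimages of $Zer_\si$-sets under the continuous map $f$ are $Zer_\si$, so $g_y\in B_1(X)$. Let $E=\{g_y:y\in\ot^\om\}$. Choose points $x_t\in X$ with $f(x_t)=t$ for each $t\in K$. The map $\Phi\colon\R^X\to\R^{\ot^\om}$, $\Phi(g)(z)=g(x_{\psi^{-1}(z)})$, is continuous, being a restriction to coordinates. It satisfies $\Phi(g_y)=f_y$ for every $y$, so $\Phi\restriction E$ is a bijection onto $D$ and $|E|=\mathfrak c$.

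To conclude, let $h\in\R^X$ lie in the closure of $E$. Then $\Phi(h)$ lies in the closure of $D$, which equals $D$ because $D$ is closed. Thus $\Phi(h)=f_y$ for some $y$. We must show $h=g_y$. Suppose instead $h$ is the limit of a net $g_{y_\la}$ with $y_\la\ne y$ eventually. Then $\Phi(g_{y_\la})=f_{y_\la}\to f_y$, which is impossible since $f_y$ is isolated in $D$. Hence the net is eventually constant equal to $g_y$, and $h=g_y$.

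So $E$ is closed in $\R^X$, and hence closed in $B_1(X)$. The same argument shows $E$ is discrete: a point $g_y$ is isolated by $\Phi^{-1}(\la f_y,y,1\ra)$. Therefore $E$ is a closed discrete subset of $B_1(X)$ of cardinality $\mathfrak c$, and $e(B_1(X))\geq\mathfrak c$.

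The main obstacle is the extension step: the Baire-one property must survive extension by $0$ off the Cantor set. This relies on $K$ being a $CZ$-set in $\R$, and that holds simply because $K$ is closed in the metrizable space $\R$.
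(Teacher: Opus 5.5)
Your proof is correct and follows essentially the paper's route: your $g_y=\tilde f_y\circ f$ is exactly the paper's $T(f_y)$, where $T(u)=u\circ\psi\circ f$ on $f^{-1}(K)$ and $T(u)=0$ elsewhere, and your Baire-one check is the same $Zer_\sigma$-preimage argument, just factored through $\R$. The only difference is that you transfer closed discreteness of $D$ through the section map $\Phi$, which satisfies $\Phi\circ T=\mathrm{id}$, instead of asserting that $T$ embeds $\R^{\ot^\om}$ onto a closed subspace of $\R^X$. Your net step is fine, but it is cleaner to say directly that $f_{y_\lambda}\to f_y$ and the isolation of $f_y$ in $D$ force $y_\lambda=y$ eventually.
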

\begin{proof}
Let $H\subset f(X)$, $c\colon H\to\ot^\om$ be a homeomorphism, $Z=f^{-1}(H)$ and $p=c\circ (f\restriction Z)\colon Z\to\ot^\om$; $p$ is a continuous surjection and $Z$ is a zero set of $X$. Define $T\colon\R^{\ot^\om}\to\R^X$ by $T(u)\restriction Z=u\circ p$ and $T(u)\restriction(X\setminus Z)=0$. Then $T$ is a homeomorphic embedding and $T(\R^{\ot^\om})=\{v\in\R^X: v$ is constant on the fibres of $p$ and $v(X\setminus Z)=\{0\}\}$ is closed in $\R^X$.

If $Q\subset\ot^\om$ is closed, then $c^{-1}(Q)$ is a closed subset of $\R$, so $p^{-1}(Q)=f^{-1}(c^{-1}(Q))$ is a zero set of $X$. Hence $p^{-1}(A)\in Zer_\si(X)$ for $A\in Zer_\si(\ot^\om)$, and, since $X\setminus Z$ is a cozero set, $T(u)^{-1}(U)\in Zer_\si(X)$ for $u\in B_1(\ot^\om)$ and open $U\subset\R$. So $T(B_1(\ot^\om))\subset B_1(X)$. By Lemma \ref{l:cantor:1}, $T(D)$ is a closed discrete subset of $\R^X$ of cardinality $\mathfrak c$ contained in $B_1(X)$.
\end{proof}

\begin{remark}\label{r:cantor}
For a space $X$ the following are equivalent: (a) $X$ can be continuously mapped onto $[0,1]$; (b) some $f\in C(X)$ has $f(X)\supset H$ with $H$ homeomorphic to the Cantor set; (c) some $f\in C(X)$ has uncountable compact $K\subset f(X)$. Indeed, (a)$\rarr$(c) is trivial, (c)$\rarr$(b) since an uncountable compact subset of $\R$ contains a Cantor set, and (b)$\rarr$(a): take a continuous $g\colon\R\to[0,1]$ with $g(H)=[0,1]$ (the Cantor function extended to $\R$). Spaces not satisfying (a) are called $nCM$-spaces (Section \ref{sec:ncm}).
\end{remark}

\section{Pestryakov's and new results}

In \cite{pes1,pes2} (see also \cite{os}), Pestryakov proved the following results.

\begin{proposition}[Pestryakov]\label{pr1} If a space $B_1(X)$ is Lindel\"{o}f for some space $X$, then
\begin{enumerate}
\item
any closed subset of $X$ is a $Coz_{\delta}$-set in $X$, in particular, $\psi(X)=\aleph_0$;
\item
any discrete family of open subsets of $X$ is countable;
\item
for any continuous function $f:X\rightarrow \mathbb{R}$ the space $f(X)$ contains no uncountable compact subspace;
\item
if $X$ is uncountable then $B_1(X)\neq B_2(X)$;
\item
if $X$ is functionally countable then $X$ is countable;
\item
if $X$ is pseudocompact then $X$ is countable.
\end{enumerate}
\end{proposition}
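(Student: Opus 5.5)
Throughout I use only two consequences of Lindel\"ofness of $B_1(X)$: countable extent, and the fact that closed subspaces of $B_1(X)$ are Lindel\"of. Each item is then proved by exhibiting, under the negation of its conclusion, an uncountable closed discrete subset of $B_1(X)$ or a closed subspace of $B_1(X)$ that fails to be Lindel\"of. I take the items in the order (3), (2), (1), (6), (5), (4).

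\textbf{Item (3)} is immediate. If $f(X)$ contains an uncountable compact set, it contains a Cantor set (Remark \ref{r:cantor}). Lemma \ref{l:cantor:2} then gives $e(B_1(X))\ge\mathfrak c$, contradicting countable extent.

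\textbf{Item (2).} Let $\{U_\alpha:\alpha<\omega_1\}$ be an uncountable discrete family of nonempty open sets. Shrink each $U_\alpha$ to a cozero set $V_\alpha\ni x_\alpha$ with $\overline{V_\alpha}\subset U_\alpha$, and take continuous $g_\alpha$ with $g_\alpha(x_\alpha)=1$ and $g_\alpha=0$ off $V_\alpha$. Discreteness makes every partial sum $\sum_{\alpha\in A}g_\alpha$ continuous. Put $f_\alpha=\sum_{\beta\ne\alpha}g_\beta$.
\begin{itemize}
\item The set $\{f_\alpha\}$ is discrete: $\la f_\alpha,x_\alpha,1/2\ra$ contains no other $f_\beta$, since $f_\beta(x_\alpha)=1$ for $\beta\ne\alpha$ while $f_\alpha(x_\alpha)=0$.
\item The set is closed in $\R^X$: any accumulation point $g$ would satisfy $g(x_\alpha)=1$ for all $\alpha$. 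If one instead uses $f_\alpha=g_\alpha$, any accumulation point would be $0$, and $0$ is not excluded. So the construction has to be set up so that every limit of distinct $f_\alpha$ is itself one of the $f_\alpha$; I would verify this coordinatewise using local finiteness.
\end{itemize}

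\textbf{Item (1).} Suppose a closed $F$ is not $Coz_\delta$. Consider the closed subspace $P=\{f\in B_1(X): f\restriction F=0\}$. I would aim to show that $P$ contains the characteristic-type limits of a family indexed by the points of $X\setminus F$ which is closed discrete and uncountable. Failing that, I would show that the open cover $\{\{f: f(x)\neq 0\}:x\notin F\}\cup\{\ldots\}$ of a suitable closed subset has no countable subcover: a countable subcover would give countably many points whose neighbourhoods witness $F$ as an intersection of countably many cozero sets. This is the step I expect to be the main obstacle. I would first try Pestryakov's approach: use that $\{\chi_{X\setminus U}:U\supset F$ cozero$\}$ accumulates only at functions vanishing on $F$, and that the indicator $1-\chi_F$ must fail to be Baire-one. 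Then $\psi(X)=\aleph_0$ follows from $F=\{x\}$.

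\textbf{Items (6), (5), (4).}
\begin{itemize}
\item (6) For uncountable pseudocompact $X$, (1) makes points $G_\delta$. A pseudocompact space with $G_\delta$ points and uncountable size admits a continuous real map with uncountable image; this image is compact, since $X$ is pseudocompact, which contradicts (3).
\item (5) For functionally countable $X$ every Baire-one function has countable range by Lemma \ref{l:fact}. I would then derive that (1) forces countability, via the resulting countable network of zero sets.
\item (4) Assume $X$ is uncountable with $B_1=B_2$. By (1), every closed set is $CZ$, so every Borel-type indicator is Baire-one. I would then combine this with (3) to produce a Cantor set in a continuous image, giving the contradiction.
\end{itemize}
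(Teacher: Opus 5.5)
Item (3) is correct and is exactly the paper's argument. Each of the other five items has a genuine gap.

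\textbf{Items (2) and (1).} In (2) your family is not closed. The $V_\alpha$ are disjoint, so for each $x\in X$ one has $f_\alpha(x)=G(x)$ for all but at most one $\alpha$, where $G=\sum_\beta g_\beta$. Hence every basic neighbourhood of $G$ contains all but finitely many $f_\alpha$. Moreover $G$ is continuous (the family is discrete) and $G(x_\alpha)=1\neq 0=f_\alpha(x_\alpha)$, so $G\in B_1(X)$ is an accumulation point of $\{f_\alpha\}$ lying outside it. Local finiteness gives convergence, not closedness, for any family of perturbations of one function supported on a discrete family. The missing idea is the paper's: the map $\varphi\mapsto(\varphi(x_\alpha))_{\alpha<\omega_1}$ sends $C(X)$, hence $B_1(X)$, onto $\mathbb{R}^{\omega_1}$, since $\sum_\alpha r_\alpha\varphi_\alpha$ is continuous for every $r$. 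And $\mathbb{R}^{\omega_1}$ is not normal (A.H.~Stone), hence not Lindel\"of. (The section $r\mapsto\sum_\alpha r_\alpha\varphi_\alpha$ even makes $\mathbb{R}^{\omega_1}$ a retract of $B_1(X)$.)

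In (1) you list strategies but carry none of them out. The paper covers the closed set $B_1^F=\{f\in B_1(X): f\upharpoonright F=0\}$ by the sets $\langle f,x_f,1/2\rangle$, where $x_f\notin F$ and $|f(x_f)|<1/2$; such $x_f$ exists, since otherwise $F=f^{-1}(0)$ would be $Coz_\delta$. It takes a countable subcover with points $s_i$ and builds $g=\lim_n\min\{1,\sum_{i\le n}\varphi_i\}$ with $\varphi_i\in C(X,[0,1])$, $\varphi_i(F)=\{0\}$, $\varphi_i(s_i)=1$. Then $g\in B_1^F$ lies in no member of the subcover. Your indicator idea can be made to work, but the point is not where the family accumulates (it converges to $1-\chi_F$). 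The point is that Lindel\"ofness forces a cluster point in $B_1(X)$. For that you need closed sets with the countable intersection property, so you must index by zero sets $Q\supseteq F$ (closed under countable intersections, unlike cozero sets) and use $\{f\in K_1(X,\mathbf{2}): f(F)=\{0\},\ f(X\setminus Q)=\{1\}\}$, as in Theorem \ref{t:pr:1}(2).

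\textbf{Items (6), (5), (4).} In (6) you claim that an uncountable pseudocompact space with $G_\delta$ points admits a continuous real function with uncountable range. This is false: $\omega_1$ with the order topology is uncountable, first countable and countably compact, yet every continuous real function on it is eventually constant and so has countable range. The right use of pseudocompactness is that every $f(X)$ is compact, so (3) makes $X$ functionally countable, and (6) then follows from (5).

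For (5) you give no mechanism; functionally countable spaces (e.g.\ $\omega_1$) need not have countable networks. The paper instead shows that functional countability forces $B_1(X)=B_2(X)$: every Baire set is $g^{-1}(H)$ with $g\colon X\to\mathbb{R}^\omega$ continuous and $g(X)$ countable, hence is $Zer_\sigma$. This reduces (5) to (4).

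Finally, (4) cannot be obtained from (1) and (3). If $\aleph_1<\mathfrak c$, the discrete space $X$ of size $\aleph_1$ satisfies (1), (3) and $B_1(X)=B_2(X)=\mathbb{R}^X$. Lindel\"ofness has to be used again. Here $B_2(X)=B_1(X)$ is Lindel\"of, hence realcompact, hence equal to $\mathbb{R}^X$ by the result the paper quotes. But $\mathbb{R}^X$ is not Lindel\"of for uncountable $X$.
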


\begin{proof}
(1). Assume that $F$ is a closed subset of $X$, but it is not $Coz_{\delta}$-set in $X$. Then for any $f\in B_1^F\triangleq\{\varphi\in B_1(X): \varphi\upharpoonright F=0\}$ there is $x_f\in X\setminus F$ such that $|f(x_f)|<\frac{1}{2}$ (otherwise $f^{-1}(0)=F$ is a $Coz_{\delta}$-set in $X$). The family $\{\langle f, x_f, \frac{1}{2} \rangle: f\in B_1^F\}$ is an open cover of $B_1^F$. Since $B_1^F$ is a closed subset of $B_1(X)$, it is Lindel\"{o}f. Hence, there is a countable subcover $\gamma$ of $B_1^F$. Let $S=\{x_f: \langle f, x_f, \frac{1}{2} \rangle\in \gamma\}=\{s_i: i\in\omega\}$.

Consider $\varphi_i\in C(X,[0,1])$ such that $\varphi_i(F)\subset \{0\}$ and $\varphi_i(s_i)=1$ for every $i\in\omega$. Let $h_n=\min\{1, \sum_{i\le n} \varphi_i\}$ for every $n\in \omega$. Note that $h_n\in C(X)$ for every $n\in \omega$. Since the sequence $(h_n)_{n\in\omega}$ is monotone bounded, there is $g\in B_1(X)$ such that $g=\lim_{n\rightarrow \infty} h_n$. Note that $g(F)\subseteq \{0\}$ and $g(S)\subseteq \{1\}$.
Thus, $g\in B_1^F$ and $g\not\in \bigcup \gamma$, a contradiction.

(2). Let $\{U_\al:\al<\om_1\}$ be a discrete family of nonempty open sets, $x_\al\in U_\al$. Let $g\colon \R^X\to \R^{\om_1}$, $g(\varphi)=(\varphi(x_\al))_{\al<\om_1}$; $g$ is continuous. For $r\in\R^{\om_1}$ take $\varphi_\al\in C(X,[0,1])$ with $\varphi_\al(x_\al)=1$ and $\varphi_\al(X\setminus U_\al)=\{0\}$; the function $\varphi=\sum_\al r_\al\varphi_\al$ is continuous since the family $\{U_\al\}$ is discrete, and $g(\varphi)=r$. Hence $g(C(X))=g(B_1(X))=\R^{\om_1}$. The space $\R^{\om_1}$ is not normal (A.H. Stone, see \cite{Eng}), hence it is not Lindel\"of, and neither is $B_1(X)$.

(3). Assume that $K\subset f(X)$ where $f\in C(X)$ and $K$ is an uncountable compact set. Then $K$ contains a copy of the Cantor set and, by Lemma \ref{l:cantor:2}, $B_1(X)$ contains an uncountable closed discrete subset. This contradicts the Lindel\"of property.

(4). If $B_1(X)=B_2(X)$ then $B_2(X)$ is Lindel\"{o}f. Since a Lindel\"{o}f space is realcompact, $B_2(X)$ is realcompact. It follows that $B_2(X)=\mathbb{R}^X$ (see  Corollary 2.4 in \cite{os1}).  Thus $\R^X$ is Lindel\"of, hence normal, and therefore $X$ is countable (the space $\R^{\om_1}$ is not normal).

(5). We claim that if $X$ is functionally countable then $B_1(X)=B_2(X)$. For any Baire subset $A$ of $X$ there are $g\in C(X,\mathbb{R}^{\omega})$ and a Baire subset $H$ of $\mathbb{R}^{\omega}$ such that $g^{-1}(H)=A$. Since $X$ is functionally countable, $g(X)\subset\prod_n g_n(X)$ is a product of countable sets, so $g(X)$ is zero-dimensional and separable metrizable and there is a homeomorphic embedding $h\colon g(X)\to\R$. Then $h\circ g\in C(X)$ and $h\circ g(X)$ is countable. Hence, $g(X)$ and $H\cap g(X)$ are countable. Since every countable subset of $\mathbb{R}^{\omega}$ is a $Zer_{\sigma}$-set, $A$ is a $Zer_{\sigma}$-set in $X$. Hence every Baire-two function is Baire-one. By (4), $X$ is countable.

(6). Since a separable metrizable continuous image of a pseudocompact space is compact, $X$ is functionally countable, by (3), and, by (5), $X$ is countable.
\end{proof}

\begin{remark}\label{r:gor}
By \cite[Theorem 3.4]{GOR26}, $B_1(X)=B_2(X)$ if and only if $X$ is a {\it $CZ$-space}, i.e. every $Zer_\si$-subset of $X$ is a $Coz_\de$-set; by \cite[Theorem 2.8]{GOR26}, every functionally countable space is a $CZ$-space. This gives another proof of items (4) and (5) of Proposition \ref{pr1}. Moreover, if $B_1(X)$ is Lindel\"of and $X$ is uncountable, then $X$ is not a $CZ$-space, although every closed subset of $X$ is a $CZ$-set by Theorem \ref{t:pr:1}(2) below. For a separable metrizable $X$ this means that $X$ is not a $\sigma$-set.
\end{remark}

Recall that a  space $X$ is called {\em $k$-scattered} if every compact  subspace of $X$ is scattered.
The {\it spread} $s(X)$ is defined as follows: $s(X)=\sup\{|D|: D$ is a discrete subset of $X\}$.
Recall that a topological space is said to be {\it perfect} if every its closed subset is
$G_{\delta}$.

\medskip

The following statement is a refinement of Pestryakov's results.

\begin{theorem}\label{t:pr:1} If a space $B_1(X)$ is Lindel\"{o}f for some space $X$, then
\begin{enumerate}
\item
$X$ is zero-dimensional;
\item
$X$ is perfect and any closed subset of $X$ is a $CZ$-set;
\item
 $s(X)=\aleph_0$;
\item
any continuous image of  $X$ is $k$-scattered.
\end{enumerate}
\end{theorem}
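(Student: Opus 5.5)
Throughout, $B_1(X)$ is Lindel\"of, so Proposition \ref{pr1} applies. Item (2) comes first, since (1) rests on it. The Lindel\"of-closed-subspace argument of Proposition \ref{pr1}(1) already shows that every closed $F\subset X$ is $Coz_\delta$. So I only need $F\in Zer_\sigma(X)$, i.e.\ every open $U$ is $Coz_\delta$.

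To get this I would rerun the same argument with the roles reversed. Let $U=X\setminus F$ be open and suppose $U\notin Zer_\sigma$. Consider the closed subspace $B^U=\{\varphi\in B_1(X):\varphi\restriction U=1\}$. For $f\in B^U$, the set $f^{-1}(1/2,\infty)\supset U$ is $Zer_\sigma$, so it differs from $U$, and there is $x_f\in F$ with $f(x_f)>1/2$. A countable subcover of $\{\la f,x_f,1/2\ra\}$ gives a countable set $S\subset F$. I then want $g\in B^U$ with $g(S)=0$. This needs the countable set $S$ to be a $Coz_\delta$-set disjoint from $U$, e.g.\ $g$ the indicator of $X\setminus S$. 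Here I would use that singletons are $G_\delta$ via zero sets: by Proposition \ref{pr1}(1), $\{s\}$ is a zero set intersection, hence a zero set. So $S=\bigcup\{s_i\}\in Zer_\sigma$, and one also needs $S\in Coz_\delta$; that is, I need countable sets to be $CZ$. The hard part is this step. If it fails directly, I would instead take $g=\sum_i 2^{-i}\varphi_i$ with $\varphi_i$ continuous, $\varphi_i(s_i)=0$, and modify via gluing (Lemma \ref{l:glue}). Once every open set is $Zer_\sigma$, $X$ is perfect, and closed sets are $CZ$.

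For (1), zero-dimensionality, I would use (2) together with Remark \ref{r:cantor}/Proposition \ref{pr1}(3). For $x\in U$ open, pick continuous $f\colon X\to[0,1]$ with $f(x)=0$ and $f(X\setminus U)=1$. If $f(X)$ contained an interval it would contain a Cantor set, contradicting (3). So $f(X)\subset[0,1]$ misses some $r\in(0,1)$, and $f^{-1}[0,r)$ is a clopen neighbourhood of $x$ inside $U$. This gives a clopen base, and together with Tychonoffness it yields zero-dimensionality. This item is easy and needs no use of (2); I would present it first.

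For (3), suppose $D\subset X$ is discrete with $|D|=\om_1$. Using (2), each point $d$ has an open $U_d$ with $U_d\cap D=\{d\}$, and $D$ is closed in $D\cup$, hmm, and $\overline D\setminus D$ is closed. Working in the open subspace $X\setminus(\overline D\setminus D)$, which is perfect, $D$ is closed there. The plan is to copy Proposition \ref{pr1}(2): map $B_1(X)\to\R^{D}$ by restriction and show it is onto $\R^{D}$. For $r\in\R^D$, glue the values $r_d$ on clopen $CZ$-pieces. Here $D$ is $Zer_\sigma$, and each $\{d\}$ is $CZ$, but uncountable gluing is problematic. So instead I would show that the image contains a closed copy of an uncountable discrete set: the functions $\chi_{\{d\}}$ form a closed discrete family of indicators, which contradicts Lindel\"ofness. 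Closedness of $\{\chi_{\{d\}}\}$ in $\R^X$ needs only that $D$ has no limit points where nets could converge to $0$; the net limit $0$ lies in $B_1$, so I would rather use $f_d$ with $f_d(d)=0$ and $f_d=1$ elsewhere, whose accumulation $1$ is excluded by evaluating at points of $D$. Thus $\{f_d\}$ is closed discrete in $B_1(X)$, provided each $f_d\in B_1$, which holds since $\{d\}$ is $CZ$ by (2).

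For (4), let $Y=g(X)$ be a continuous image and let $K\subset Y$ be compact and non-scattered. Then $K$ maps continuously onto $[0,1]$, and by Tietze this extends to $Y$ only if $K$ is nicely embedded, which is problematic. Instead I would use that $K$ contains a closed $C$ mapping onto the Cantor set, push through to $X$, and invoke Lemma \ref{l:cantor:2}-type reasoning via (2) and (3). Making this step rigorous is the main obstacle, alongside the countable-set $CZ$ step in (2).
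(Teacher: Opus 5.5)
Your item (1) is correct and is the paper's argument. Items (2), (3) and (4) each have a genuine gap.

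\textbf{Item (2).} The reversed argument targets the wrong class. For $U=X\setminus F$ we have $F\in Coz_\de\iff U\in Zer_\si$, so the hypothesis ``$U\notin Zer_\si$'' is exactly what Proposition~\ref{pr1}(1) already rules out; as written, the argument proves nothing new. You would need to assume $U\notin Coz_\de$ and use the $Coz_\de$-set $f^{-1}([\frac12,\infty))$. More seriously, the step you flag is left open. You need $g\in B_1(X)$ with $g\restriction U=1$ and $g\restriction S=0$ for a countable $S\subset F$. The obvious candidates are circular: $\chi_{X\setminus S}$ needs $S\in Coz_\de$, and countable sets need not be $G_\de$; $\chi_{X\setminus\overline S}$ needs $\overline S$ to be a $CZ$-set, which is a special case of (2) itself. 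The continuous fallback also fails, because a continuous function equal to $1$ on $U$ equals $1$ on $\overline U$, so it cannot vanish on $S\cap\overline U$. No reversal is needed, though. In the proof of Proposition~\ref{pr1}(1), the ``otherwise'' case gives $F=f^{-1}(0)=f^{-1}((-\frac12,\frac12))$, which is also $Zer_\si$. So the same argument, run under the hypothesis ``$F$ is not a $CZ$-set'', already yields (2). The paper uses a different device: the closed sets $\cF_Q=\{f\in K_1(X,\ot): f(F)=\{0\},\ f(X\setminus Q)=\{1\}\}$, for zero sets $Q\supset F$, have the countable intersection property, since the indicator of $X\setminus\bigcap_n Q_n$ lies in $\bigcap_n\cF_{Q_n}$. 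By Lindel\"ofness some $f$ lies in all of them, and that $f$ is the indicator of $X\setminus F$.

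\textbf{Item (3).} The family $f_d=1-\chi_{\{d\}}$ is not closed in $B_1(X)$. For distinct $d_n\in D$ one has $f_{d_n}\to\mathbf 1$ pointwise, because $f_{d_n}(x)=1$ as soon as $d_n\ne x$. Since $\mathbf 1\in B_1(X)$, the set is not closed; in fact $\{f_d:d\in D\}\cup\{\mathbf 1\}$ is compact. Note also that your argument never uses discreteness of $D$; applied to $D=X$ it would show $X$ is countable, which is Pestryakov's open question. The missing idea is that every $A\subset D$ is a $CZ$-set. Indeed $A=\overline A\cap D$, and $D=\overline D\cap V$ for some open $V$, so (2) applies. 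Hence extension by zero sends every $u\in\R^D$ into $B_1(X)$; no gluing lemma is needed, since preimages of open sets are subsets of $D$, possibly together with $X\setminus D$. This map embeds $\R^D$ onto the closed subspace $\{v\in B_1(X): v\restriction(X\setminus D)=0\}$, and since $\R^{\om_1}$ is not Lindel\"of, $D$ is countable.

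\textbf{Item (4).} As it stands, (4) is unproved, but the obstacle you see is not real. A compact $K\subset Y$ is closed in the normal space $\beta Y$, hence $C^*$-embedded in $\beta Y$ and therefore in $Y$. So a surjection $K\to[0,1]$ extends to some $q\colon Y\to[0,1]$, and $q\circ g$ maps $X$ onto $[0,1]$. This contradicts Proposition~\ref{pr1}(3) via Remark~\ref{r:cantor}.
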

\begin{proof}
(1).  Since $X$ is Tychonoff, for any point $x$ and a closed non-empty subset $F$ of $X$ ($x\notin F$) there is a continuous function $f: X\rightarrow [0,1]$  such that $f(x)=0$ and $f(F)=\{1\}$. By Proposition \ref{pr1} (3),
$f(X)\neq [0,1]$. Let $a\in [0,1]\setminus f(X)$. Then $W=f^{-1}([0,a))$ is a clopen set such that $x\in W\subset (X\setminus F)$.

(2). Let $F\subset X$ be a closed set. Let $\cQ=\{ Q\subset X: F\subset Q$ and $Q$ is a zero set $\}$,
$\cF_Q=\{ f\in K_1(X,\ot): f(F)=\{0\}$ and $f(X\setminus Q)=\{1\}\}$ for $Q\in \cQ$. Then  $\cF_{Q}$ is a closed set in $B_1(X)$ for any $Q\in\cQ$. If $Q_n\in\cQ$, $n\in\om$, then $Q=\bigcap_n Q_n\in\cQ$ and the indicator function of $X\setminus Q$ belongs to $\bigcap_n\cF_{Q_n}$. Thus the family $\{\cF_Q:Q\in\cQ\}$ has the countable intersection property. Since $B_1(X)$ is Lindel\"{o}f,  $\cF=\bigcap \{\cF_Q:Q\in\cQ\}\neq\es$. Let $f\in \cF$. Then $f(F)=\{0\}$  and $f(X\setminus F)=\{1\}$ (for $x\notin F$ there is $Q\in\cQ$ with $x\notin Q$, since $X$ is Tychonoff). It follows that $F$ is a $CZ$-set.

(3). By (2), closed sets, and hence open sets, are $CZ$-sets in $X$. Let $D\subset X$ be discrete. Then $D=\overline D\cap U$ for some open $U$, so $D$ is a $CZ$-set. For $A\subset D$ we have $A=\overline A\cap D$, which is a $CZ$-set by (2). Define $T\colon\R^D\to\R^X$, $T(u)\restriction D=u$, $T(u)\restriction(X\setminus D)=0$. For open $U\subset\R$ the set $T(u)^{-1}(U)$ is either $u^{-1}(U)$ or $u^{-1}(U)\cup (X\setminus D)$, a $Zer_\si$-set. Hence $T$ is a homeomorphism of $\R^D$ onto the closed subspace $\{v\in B_1(X): v(X\setminus D)=\{0\}\}$ of $B_1(X)$. Thus $\R^D$ is Lindel\"of and $D$ is countable.

(4). Let $g\colon X\to Y$ be a continuous surjection and $K\subset Y$ a non-scattered compact set. Then there is a continuous map of $K$ onto $[0,1]$; it extends to a continuous map $q\colon Y\to[0,1]$ ($K$ is $C^*$-embedded in $\beta Y$, hence in $Y$). Then $q\circ g$ maps $X$ onto $[0,1]$, contrary to Proposition \ref{pr1}(3).

\end{proof}

A closed $CZ$-set need not be a zero set, even in a separable first countable space; for normal spaces, however, every closed $CZ$-set is a zero set, since in a normal space every closed $G_\de$-set is a zero set.

\begin{example}\label{ex:niem}
Let $L$ be the Niemytzki plane, $P=\R\times\{0\}$ and $A=\Q\times\{0\}$. Then $A$ is a closed $CZ$-set in $L$ which is not a zero set.
\end{example}
\begin{proof}
The set $A$ is closed, since $P$ is a closed discrete subspace of $L$. For $a\in P$ and $r>0$ let $D(a,r)$ be the open disc of radius $r$ in the upper half-plane tangent to $P$ at $a$; there is a continuous $f_{a,r}\colon L\to[0,1]$ with $f_{a,r}^{-1}((0,1])=D(a,r)\cup\{a\}$.

$A\in Zer_\si(L)$: $L$ is first countable and Tychonoff, so every point is a zero set, and $A$ is countable.

$A\in Coz_\de(L)$: let $A=\{a_k:k\in\om\}$ and $U_n=\{\sum_k 2^{-k}f_{a_k,2^{-k}/n}>0\}=\bigcup_k\big(D(a_k,2^{-k}/n)\cup\{a_k\}\big)$. Each $U_n$ is a cozero set, $U_n\cap P=A$, and every point $(x,y)\in U_n$ with $y>0$ satisfies $y<2/n$. Hence $\bigcap_n U_n=A$.

$A$ is not a zero set: suppose $A=g^{-1}(0)$, $g\in C(L)$. For $x\in\R\setminus\Q$ there is $k$ such that $|g|>1/k$ on $D((x,0),1/k)\cup\{(x,0)\}$; let $E_k$ be the set of such $x$. Since $\R\setminus\Q=\bigcup_k E_k$ is a Baire space (in the Euclidean topology), some $E_k$ is dense in an open interval $I$. Take $a\in A$ with $a=(q,0)$, $q\in I$, and $0<s\le 1/k$ with $|g|<1/k$ on $D(a,s)\cup\{a\}$. Take $x\in E_k$ with $d=|x-q|<s$. The discs $D((x,0),R)$, $R=1/k$, and $D(a,s)$ intersect, since the distance between their centres satisfies $d^2+(R-s)^2<(R+s)^2\iff d^2<4Rs$, which holds as $d<s\le R$. At a common point we get $|g|>1/k$ and $|g|<1/k$, a contradiction.
\end{proof}

\begin{question}\label{q:cz:normal}
Assume that $X$ is a space such that any closed subset of $X$ is a $CZ$-set. Is $X$ a normal space ?
\end{question}

\begin{remark}
If $X$ is separable and every closed subset of $X$ is a $CZ$-set, then $2^{|D|}\le\mathfrak c$ for every closed discrete $D\subset X$. Indeed, $A\mapsto\overline A$ is an injection of $\mathcal P(D)$ into the family of $CZ$-sets, and $|C(X)|\le\mathfrak c$. In particular, the Niemytzki plane and the Mr\'owka spaces $\Psi(\mathcal A)$ with $|\mathcal A|=\mathfrak c$ do not satisfy the assumption of Question \ref{q:cz:normal}.
\end{remark}

\begin{remark}
A space is called a {\it $Q_f$-space} if every its subset is a $CZ$-set \cite{GOR26}; equivalently, $B_1(X)=\R^X$. Every closed subset of a $Q_f$-space is a $CZ$-set. Hence a positive answer to Question \ref{q:cz:normal} would give a negative answer to Problem 2.18 of \cite{GOR26} (is there a non-normal $Q_f$-space?), and a non-normal $Q_f$-space would answer Question \ref{q:cz:normal} in the negative.
\end{remark}

\begin{question}\label{q:lind} Assume that $B_1(X)$ is Lindel\"{o}f. Which of the following statements are true?
\begin{enumerate}
\item
$X$ is normal;
\item
$X$ is hereditarily Lindel\"{o}f;
\item
$X$ is hereditarily separable;
\item
$X$ is submetrizable;
\item
$X$ is cosmic.
\end{enumerate}
\end{question}

\section{Realcompactness}\label{sec:sp}

Recall \cite{arch} that a subspace $Z$ of a space $T$ is {\it $\aleph_0$-placed} ({\it $G_\de$-closed}) in $T$ if for every $p\in T\setminus Z$ there is a $G_\de$-set $P$ of $T$ with $p\in P\subset T\setminus Z$. The following fact is well known; it follows from Propositions II.4.9, II.4.10 and II.4.13 (the latter is essentially due to Chigogidze) in \cite{arch}, see also \cite{Arh83}.

\begin{proposition}\label{p:dense}
Let $X$ be a set, $Y$ a separable metrizable space and $Z$ a dense subspace of $Y^X$. Then $Z$ is realcompact if and only if $Z$ is $\aleph_0$-placed in $Y^X$, i.e., for every $f\in Y^X\setminus Z$ there is a countable $S\subset X$ such that $h\restriction S\ne f\restriction S$ for all $h\in Z$.
\end{proposition}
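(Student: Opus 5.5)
The proof will combine the Hewitt–Nachbin characterization of realcompactness with the Chigogidze-type description of realcompact subspaces of products, following Propositions II.4.9, II.4.10 and II.4.13 of \cite{arch}. The easy direction is that realcompactness implies $\aleph_0$-placedness; the converse is the substantial one.

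\emph{Realcompact implies $\aleph_0$-placed.} Suppose $Z$ is realcompact and $f\in Y^X\setminus Z$.
\begin{enumerate}
\item Embed $Y$ as a closed subspace of $\R^\om$, so that $Y^X$ becomes a closed subspace of $(\R^\om)^X\cong \R^{X\times\om}$. In particular $Y^X$ is realcompact.
\item Since $Z$ is dense in the realcompact space $Y^X$, we get $Z\subset Y^X\subset \upsilon Z$.
\item Because $Z$ is realcompact, $\upsilon Z=Z$, so $Y^X\setminus Z$ is nonempty only if the map $Z\hookrightarrow Y^X$ is not the identity on the Hewitt extension. Concretely, $f\notin Z=\upsilon Z$ gives a continuous $g\colon Z\to\R$ that does not extend continuously to $Z\cup\{f\}$; equivalently, $g$ has no finite limit at $f$.
\item Dense subspaces of products of separable metrizable spaces have the property that every continuous real function depends on countably many coordinates (Gul'ko/Mibu/Arhangel'skii). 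So there is a countable $S\subset X$ and a continuous $\tilde g$ on $\pi_S(Z)\subset Y^S$ with $g=\tilde g\circ\pi_S$.
\item If $f\restriction S=h\restriction S$ for some $h\in Z$, write $P=\{u\in Y^X: u\restriction S=f\restriction S\}$; this is a $G_\de$ set containing $f$.
\item Deriving the required discontinuity from this needs care. The cleaner route is the standard one: a realcompact space is $\aleph_0$-placed in every extension in which it is dense and $C$-embedded. Density of $Z$ in $Y^X$, plus factorization, shows $Z$ is $C$-embedded in $Y^X$, since each $g$ extends as $\tilde g\circ\pi_S$ once $\tilde g$ is extended to $\overline{\pi_S(Z)}$, which I would justify via Prop.~II.4.10.
\item Then $f\in Y^X=\upsilon Z$, so $Z$ is not realcompact.
\end{enumerate}
Consequently, in the realcompact case we must have $Z=Y^X$ whenever the $G_\de$ condition fails. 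Put differently, the $G_\de$ set $P$ through $f$ must miss $Z$, which is exactly the condition $h\restriction S\ne f\restriction S$ for all $h\in Z$.

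\emph{$\aleph_0$-placed implies realcompact.} This is the harder direction.
\begin{enumerate}
\item Write $Z=\bigcap_{f\notin Z}(Y^X\setminus P_f)$, where each $P_f=\{u: u\restriction S_f=f\restriction S_f\}$.
\item The complement of $P_f$ is the preimage under the projection $\pi_{S_f}$ of $Y^{S_f}\setminus\{f\restriction S_f\}$. This is a cozero set, hence realcompact, being the inverse image of an open subset of a separable metrizable space under a map from a realcompact space.
\item More precisely, $Y^X\setminus P_f$ is a cozero set of the realcompact space $Y^X$, hence realcompact.
\item An intersection of realcompact subspaces is realcompact, which gives the conclusion.
\end{enumerate}
This last step is the main obstacle conceptually, but it is in fact standard.
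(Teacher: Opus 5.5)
\textbf{There is a genuine gap, in the direction realcompact $\Rightarrow$ $\aleph_0$-placed.} Your direction ``$\aleph_0$-placed $\Rightarrow$ realcompact'' is correct and is exactly the paper's argument: $Z$ is an intersection of the cozero sets $Y^X\setminus P_f$ of the realcompact space $Y^X$.

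In the forward direction, steps 2, 6 and 7 rest on the claim that a dense $Z\subset Y^X$ is $C$-embedded in $Y^X$, i.e.\ $Y^X\subset\upsilon Z$. This is false. Take $X=\om$, $Y=\R$ and $Z=\R^\om\setminus\{\mathbf 0\}$. Then $Z$ is dense and Lindel\"of, so $\upsilon Z=Z\ne\R^\om$. The function $z\mapsto 1/d(z,\mathbf 0)$ ($d$ a compatible metric) is continuous on $Z$ and has no continuous extension to $\mathbf 0$.

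So the factor $\tilde g$ on $\pi_S(Z)$ need not extend to $\overline{\pi_S(Z)}=Y^S$, and Prop.~II.4.10 cannot supply this. Steps 6--7 also never use the failure of the condition at $f$; if valid, they would show that every dense realcompact $Z$ equals $Y^X$. The ``standard fact'' you invoke is vacuous: a realcompact space that is dense and $C$-embedded in a Tychonoff space $T$ must coincide with $T$.

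\textbf{The fix is already in your steps 3--5, which you abandoned.} Take $g\in C(Z)$ with no continuous extension to $Z\cup\{f\}$ (step 3), and take $S$ and $\tilde g$ from the factorization in step 4. Suppose $f\restriction S=h\restriction S$ for some $h\in Z$. Then $\pi_S(f)\in\pi_S(Z)$, so $\tilde g\circ\pi_S$ is defined and continuous on $Z\cup\{f\}$ and extends $g$, a contradiction. Hence this $S$ is the required witness at $f$.

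The point is that you extend only to the single point $f$. This is possible exactly because $\pi_S(f)\in\pi_S(Z)$, and no extension of $\tilde g$ beyond $\pi_S(Z)$ is needed. This is the paper's proof in contrapositive form: if every $W_{f,S}$ meets $Z$, then $Z$ is $C$-embedded in $Z\cup\{f\}$, and a realcompact space $C$-embedded in a Tychonoff space is closed in it.

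\textbf{A minor point.} A separable metrizable $Y$ embeds as a closed subspace of $\R^\om$ only if it is Polish, so not, e.g., $Y=\Q$. Realcompactness of $Y^X$ should instead come from $Y$ being Lindel\"of, hence realcompact, together with the fact that products of realcompact spaces are realcompact.
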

\begin{proof}
Every $G_\de$-subset of $Y^X$ containing $f$ contains a set $W_{f,S}=\{g\in Y^X: g\restriction S=f\restriction S\}$ with $S$ countable, and $W_{f,S}$ is a zero set of $Y^X$; this gives the second form of the condition.

($\Leftarrow$) $Z=\bigcap\{Y^X\setminus W_{f,S_f}: f\in Y^X\setminus Z\}$ is an intersection of cozero subsets of the realcompact space $Y^X$, so it is realcompact \cite{Eng}.

($\Rightarrow$) Let $f\in Y^X\setminus Z$ be such that every $W_{f,S}$ meets $Z$. By the factorization lemma \cite[0.2.3]{arch}, every continuous $\varphi\colon Z\to\R$ has the form $\varphi=g\circ p_S\restriction Z$ for a countable $S\subset X$ and a continuous function $g$ on $p_S(Z)$, where $p_S\colon Y^X\to Y^S$ is the projection. Since $W_{f,S}\cap Z\ne\es$, we have $p_S(f)\in p_S(Z)$, and $g\circ p_S$ is a continuous extension of $\varphi$ to $Z\cup\{f\}$. Thus $Z$ is $C$-embedded in $Z\cup\{f\}$. A realcompact space which is $C$-embedded in a Tychonoff space is closed in it \cite{Eng}, so $Z$ is not realcompact.
\end{proof}

\begin{remark}\label{r:os1}
For $Z=C_p(X)\subset\R^X$ Proposition \ref{p:dense} yields the theorem of Arhangel'skii \cite{Arh83}: $C_p(X)$ is realcompact iff every strictly $\aleph_0$-continuous function on $X$ is continuous; see also \cite{Usp83}. For $Z=B_1(X,Y)$ it yields \cite[Corollary 2.1]{os1}: $B_1(X,Y)$ is realcompact iff every $f\colon X\to Y$ which coincides on each countable $S\subset X$ with some $h\in B_1(X,Y)$ belongs to $B_1(X,Y)$.
\end{remark}

Next, we investigate realcompactness of $K_1(X,\ot)$ and $B_1(X)$.

\subsection{Realcompactness of $K_1(X,\ot)$}
\label{sec:realcompact2}

\begin{proposition}\label{p:en:1}
Let $X$ be an uncountable functionally Hausdorff space. Then there exists a disjoint uncountable family of zero-subsets of $X$.
\end{proposition}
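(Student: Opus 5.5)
Throughout, $X$ is Tychonoff, so continuous real functions separate points; functional Hausdorffness is all we use. We split into two cases according to whether some $f\in C(X)$ has uncountable image.

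\emph{Case 1: there is $f\in C(X)$ with $f(X)$ uncountable.} For each $t\in f(X)$ the fibre $f^{-1}(t)$ is a nonempty zero set of $X$, since $f^{-1}(t)=(f-t)^{-1}(0)$. Distinct points $t$ give disjoint fibres. Hence $\{f^{-1}(t): t\in f(X)\}$ is an uncountable disjoint family of nonempty zero sets, and we are done.

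\emph{Case 2: every $f\in C(X)$ has countable image}, i.e.\ $X$ is functionally countable. Choose distinct points $x_\al$, $\al<\om_1$, in $X$; this is possible since $X$ is uncountable. For $\al\ne\be$ pick a continuous $f_{\al\be}\colon X\to\R$ with $f_{\al\be}(x_\al)\ne f_{\al\be}(x_\be)$. For each $\al$ we want a zero set $Z_\al\ni x_\al$ such that the $Z_\al$ are pairwise disjoint, at least on an uncountable subfamily.

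The plan is a transfinite recursion that refines a countable set of functions. At stage $\al$, let $\Phi_\al=\{f_{\ga\de}:\ga,\de\le\al\}$, a countable set, and put $g_\al=(\varphi)_{\varphi\in\Phi_\al}\colon X\to\R^\om$. Each coordinate has countable image, so $g_\al(X)$ is countable. Consequently each fibre $g_\al^{-1}(g_\al(x))$ is a zero set: it is a countable intersection of zero sets $\varphi^{-1}(\varphi(x))$.

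The obstacle is that fibres of a single countable family separate only countably many points. We therefore argue differently, by contradiction. Suppose every disjoint family of nonempty zero sets is countable. Take a maximal disjoint family $\mathcal Z$ of nonempty zero sets, obtained by Zorn's lemma; it is countable. Consider the map $g=(\varphi)_{\varphi\in\Phi}$, where $\Phi\subset C(X)$ is a countable set of functions such that each $Z\in\mathcal Z$ is a zero set of some member of $\Phi$.

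In the functionally countable case, $g(X)$ is countable. Hence $X$ is a countable union of fibres $g^{-1}(t)$, and since $X$ is uncountable, some fibre $F=g^{-1}(t)$ is uncountable. Now choose two points $a\ne b$ in $F$ and a continuous $h$ with $h(a)\ne h(b)$. The image $h(F)$ is countable and has at least two points, so $F$ splits into countably many disjoint zero sets $F\cap h^{-1}(s)$, at least one of which is uncountable.

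Iterating this splitting along a Cantor-type tree, taking the uncountable piece at each step together with its complementary siblings, produces disjoint nonempty zero sets. Each step adds a new disjoint zero set, namely a sibling piece, to the previous ones.

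The main difficulty is reaching $\om_1$ many pieces. Countable iteration yields only countably many, and at limit stages the intersections remain zero sets but may be empty or countable. To handle this, we would track a decreasing sequence of uncountable zero sets; at countable limits we take intersections, which are zero sets in the form of fibres of a countable family of functions. Their images are still countable, so some fibre of the limit family is uncountable, by the same counting argument as above. This lets the recursion continue for $\om_1$ steps, and the sibling pieces chosen along the way form an uncountable disjoint family of nonempty zero sets.
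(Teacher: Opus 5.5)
Your Case 1 is correct. Case 2 breaks down at the limit stages of the recursion, which is precisely the step you identify as the main difficulty.

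The counting argument only shows that the countable family of functions used before a limit stage $\lambda$ has \emph{some} uncountable fibre in $X$. It says nothing about $\bigcap_{\xi<\lambda}F_\xi$, which may well be empty. If you continue from an uncountable fibre off the branch, it can lie inside a sibling chosen earlier. Then all later siblings lie inside that sibling, and disjointness fails.

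This actually happens. Take the uncountable functionally countable space $X=\om_1\times\om$, with $\om$ discrete. Start with $F_0=X$ and split $F_n=\om_1\times[n,\om)$ by the indicator of the clopen set $\om_1\times\{n\}$. Both pieces are uncountable zero sets, so your rule allows keeping $F_{n+1}=\om_1\times[n+1,\om)$ with sibling $\om_1\times\{n\}$. Then $\bigcap_n F_n=\es$, and the uncountable fibres of the limit family are exactly the siblings $\om_1\times\{n\}$. So the invariant ``the branch is uncountable'' cannot be maintained by cardinality counting. It is also not the invariant you need.

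What you need is only that the branch never collapses to a single point. The missing idea is to anchor the recursion at a point.
\begin{itemize}
\item If every singleton of $X$ is a zero set, the singletons already form the required family.
\item Otherwise fix $x$ such that $\{x\}$ is not a zero set, and build a decreasing sequence of zero sets $F_\al\ni x$, $\al<\om_1$, with $F_0=X$.
\item At a successor step, $F_\al$ is a zero set, so $F_\al\ne\{x\}$. Hence there are $y_\al\in F_\al\setminus\{x\}$ and a continuous $f_\al\colon X\to[0,1]$ with $f_\al(x)=0$ and $f_\al(y_\al)=1$. Put $F_{\al+1}=F_\al\cap f_\al^{-1}(0)$ and $S_{\al+1}=F_\al\cap f_\al^{-1}(1)$.
\item At limit stages take the intersection. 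It is a countable intersection of zero sets, hence a zero set, and it still contains $x$.
\end{itemize}
For $\beta<\ga$ one has $S_{\ga+1}\subset F_\ga\subset F_{\beta+1}$ and $S_{\beta+1}\cap F_{\beta+1}=\es$. So the nonempty zero sets $S_{\al+1}$ are pairwise disjoint. This is the paper's proof, and it needs neither your Case 1/Case 2 split nor functional countability.
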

\begin{proof}
If every point of $X$ is a zero set, then $\{\{x\}:x\in X\}$ is the required family.
Suppose that some point $x\in X$ is not a zero set in $X$. We construct by induction zero sets $F_\al\ni x$ and $S_{\al+1}$, $\al<\om_1$. Let $F_0=X$. If $F_\al$ is constructed, then $F_\al\neq\{x\}$, since $\{x\}$ is not a zero set; take $y_\al\in F_\al\setminus\{x\}$ and a continuous $f_\al\colon X\to[0,1]$ with $f_\al(x)=0$, $f_\al(y_\al)=1$, and put $F_{\al+1}=F_{\al}\cap f_{\al}^{-1}(0)$ and $S_{\al+1}=F_{\al}\cap f_{\al}^{-1}(1)\ni y_\al$.
If $\al$ is limit, then $F_{\al}=\bigcap \{F_{\gamma}: \gamma<\alpha\}$ is a zero set as a countable intersection of zero sets.
If $\beta<\ga$, then $S_{\ga+1}\subset F_\ga\subset F_{\beta+1}$ and $S_{\beta+1}\cap F_{\beta+1}=\es$.
Thus, $\{S_{\alpha+1}: \alpha<\omega_1\}$ is a disjoint uncountable family of nonempty zero-subsets of $X$.
\end{proof}

\begin{lemma} \label{l:sp:1}
Let $X$ be a space and let $\mathcal{F}$ be a countable cover of $X$ consisting of zero-sets of $X$.
Then there exists a countable partition $\mathcal{P}$ of $X$ consisting of $CZ$-sets of $X$ such that $\mathcal{P}$ is a refinement of $\cF$.
\end{lemma}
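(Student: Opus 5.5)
Enumerate $\cF=\{F_n:n\in\om\}$ and perform the standard disjointification. Put
\[
P_0=F_0,\qquad P_n=F_n\setminus\bigcup_{k<n}F_k=F_n\cap\bigcap_{k<n}(X\setminus F_k)\quad (n\ge 1).
\]
The sets $P_n$ are pairwise disjoint. Indeed, if $m<n$, then $P_n\cap F_m=\es$ while $P_m\subset F_m$. They also cover $X$: given $x\in X$, let $n$ be the least index with $x\in F_n$; then $x\in P_n$. Finally, $P_n\subset F_n$ for every $n$, so $\{P_n\}$ refines $\cF$. Let $\cP=\{P_n:n\in\om\}\setminus\{\es\}$. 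It is a countable partition of $X$ refining $\cF$.

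It remains to check that each $P_n$ is a $CZ$-set. Every zero set $F_k$ is a $CZ$-set, as noted in the Preliminaries. The complement $X\setminus F_k$ is then a $CZ$-set too, being a cozero set (equivalently, because complements of $CZ$-sets are $CZ$-sets). Each $P_n$ is a finite intersection of $CZ$-sets, and finite intersections of $CZ$-sets are $CZ$-sets. Hence every $P_n$ is a $CZ$-set.

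For completeness, here is why finite intersections preserve the $CZ$ property. $Coz_\de$ is closed under countable, hence finite, intersections, since a countable intersection of countable intersections of cozero sets is again one. $Zer_\si$ is closed under finite intersections, since $\bigcup_i A_i\cap\bigcup_j B_j=\bigcup_{i,j}(A_i\cap B_j)$ and the intersection of two zero sets is a zero set. There is no real obstacle in this argument. The only point needing care is that the construction uses only finite intersections at each stage; a countable intersection of $Zer_\si$-sets need not be $Zer_\si$, but that case never arises here.
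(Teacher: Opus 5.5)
Your proof is correct and follows the paper's argument: the same disjointification $P_n=F_n\setminus\bigcup_{k<n}F_k$, with each $P_n$ a $CZ$-set as the intersection of a zero set and a cozero set. Discarding empty pieces, and so handling a finite $\mathcal{F}$ enumerated with repetitions, is a harmless extra touch.
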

\begin{proof}
Let $\cF=\{F_n:n\in\om\}$. Consider $P_n=F_n\setminus \bigcup_{i<n}F_i$ for every $n\in\om$; $P_n$ is the intersection of a zero set and a cozero set, hence a $CZ$-set. Then $\cP=\{P_n:n\in\om\}$.
\end{proof}

Recall that subsets $A$ and $B$ of a space $X$ are  {\em $G_\de$-separated}, if there are $G_\de$ sets $P$ and $R$  such that $P\cap R=\es$, $A\subset P$ and $B\subset R$.
We call $A$ and $B$ {\em $CZ$-separated} if there is a $CZ$-set $Q$ such that  $A\subset Q$,  $B\subset X\setminus Q$.

\begin{lemma} \label{lem2}
Lindel\"{o}f  disjoint subsets $A$ and $B$ of a space $X$ are $G_\de$-separated  if and only if $A$ and $B$ are $CZ$-separated.
\end{lemma}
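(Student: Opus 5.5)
The easy direction is immediate. If $Q$ is a $CZ$-set with $A\subset Q$ and $B\subset X\setminus Q$, then $Q\in Coz_\de(X)$ is a countable intersection of cozero sets, and cozero sets are open. Hence $Q$ is a $G_\de$-set. Likewise $X\setminus Q$ is a $CZ$-set, hence also $G_\de$. So $P=Q$ and $R=X\setminus Q$ give the required $G_\de$-separation.

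For the converse, suppose $P=\bigcap_n U_n$ and $R=\bigcap_n V_n$ are disjoint $G_\de$-sets, with $U_n,V_n$ open, $A\subset P$ and $B\subset R$. The plan is to shrink these open sets to cozero sets using Lindel\"ofness, and then to build a $CZ$-set by an alternating countable construction.

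\textbf{Step 1.} We first replace the open sets by cozero sets. For each $n$ and each $a\in A$ there is a cozero set $W$ with $a\in W\subset U_n$, since $X$ is Tychonoff. Because $A$ is Lindel\"of, countably many such $W$ cover $A$, and their union $U'_n$ is a cozero set with $A\subset U'_n\subset U_n$. Define $V'_n\subset V_n$ in the same way for $B$. Then $P'=\bigcap_n U'_n$ and $R'=\bigcap_n V'_n$ are disjoint $Coz_\de$-sets containing $A$ and $B$ respectively.

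\textbf{Step 2.} Next we need disjoint sets that are $Zer_\si$ as well. For each $b\in B$ we have $b\notin P'$, so $b\notin U'_{n}$ for some $n$, which means $b$ lies in the zero set $X\setminus U'_n$. Thus $B\subset \bigcup_n (X\setminus U'_n)=X\setminus P'$, and $X\setminus P'$ is a $Zer_\si$-set. Symmetrically, $A\subset X\setminus R'\in Zer_\si(X)$. Each of these four sets is only $Coz_\de$ or only $Zer_\si$, so a genuine $CZ$-set still has to be produced.

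The natural route is a countable alternating scheme. Put $S_0=P'\cap (X\setminus R')$, which contains $A$ and is a $CZ$-set only if both factors are. To make this work I would write $X\setminus R'=\bigcup_k Z_k$ with $Z_k$ zero sets. Then $A\subset \bigcup_k (Z_k\cap P')$, and each $Z_k\cap P'$ is a zero set intersected with a $Coz_\de$-set, so it is $Coz_\de$. Since $P'\cap R'=\es$, we have $P'\subset X\setminus R'$. This suggests trying $Q=P'$ directly and checking whether $P'$ can be made $Zer_\si$.

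The trick is to choose the cozero sets $U'_n$ so that their closures behave well. Each $U'_n$ can be taken to be a countable union of cozero sets $W$, each of which contains a zero-set neighbourhood $Z_W\ni a$ with $Z_W\subset W$. Replacing each $W$ by a zero set and refining, I would take a countable cover of $A$ by zero sets $Z_{n,j}\subset U_n\setminus$ (nothing from $R$ yet). Then I would intersect across all $n$ via the partition trick of Lemma \ref{l:sp:1}, which turns countable families of zero sets into countable partitions by $CZ$-sets.

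\textbf{Main obstacle.} The hard step is making $Q$ simultaneously $Zer_\si$ and $Coz_\de$ using only the Lindel\"ofness of $A$ and $B$. I would handle it by a back-and-forth construction. Define the $Coz_\de$-set $Q_1=P'\supset A$, with complement $Zer_\si$ containing $B$. Define the $Zer_\si$-set $Q_2=X\setminus R'\supset A$. Then set
\[Q=\bigcup_k \bigl(Z_k\cap P'\bigr),\]
where the zero sets $Z_k$ are chosen via Lindel\"ofness of $A$ so that each $Z_k\cap P'$ is actually a zero set. To get this, I would make each $Z_k$ sit inside every $U'_n$ by intersecting countably many zero-set neighbourhoods; countable intersections of zero sets are zero sets, so this keeps the pieces zero sets. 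That yields $Q\in Zer_\si$ with $A\subset Q\subset P'$. Symmetrically, I would build a $Zer_\si$-set $Q'$ with $B\subset Q'\subset R'$. Finally I would iterate this $\om$ times, alternately enlarging the sets, to obtain a $Zer_\si$-set whose complement is also $Zer_\si$; that set is the required $CZ$-set $Q$.
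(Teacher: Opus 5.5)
Your easy direction and Step 1 are correct, and Step 1 is exactly the paper's first move. The converse, however, is not proved. The decisive step is deferred to an unspecified ``back-and-forth'' iteration, and the intermediate target you set yourself is unattainable in general. That target is a $Zer_\si$-set $Q$ with $A\subset Q\subset P'$. Take $X=\R$, $A=\R\setminus\Q$, $B=\{0\}$, $R=B$, and $P=A=\bigcap_n U_n$ with $U_n=\R\setminus\{q_0,\dots,q_n\}$, where $\Q=\{q_n:n\in\om\}$. Then $A\subset P'\subset P$ forces $P'=A$, so $Q$ would have to be $A$ itself, which is not $F_\si$ in $\R$.

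The mechanism you sketch fails for the same reason. Sets of the form $\bigcap_n Z_{a,n}$ are not open, so Lindel\"ofness of $A$ extracts no countable subfamily of them covering $A$. What Lindel\"ofness does give is $A\subset\bigcap_n\bigcup_j Z_{n,j}$ with $Z_{n,j}\subset U'_n$, which is only a $Zer_{\si\de}$-set. Nothing in ``iterate $\om$ times, alternately enlarging'' explains why the outcome would be a $Zer_\si$-set with $Zer_\si$ complement.

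The missing idea is already latent in your Step 2 observation that $P'\cap R'=\es$. It means that the countable family $\cF=\{X\setminus U'_n,\ X\setminus V'_n:n\in\om\}$ of zero sets covers all of $X$. Moreover, each member misses $A$ (the sets $X\setminus U'_n$) or misses $B$ (the sets $X\setminus V'_n$). Apply Lemma \ref{l:sp:1} to this cover of $X$; a family of zero sets covering only $A$ does not give a partition of $X$. You get a countable partition $\cP$ of $X$ into $CZ$-sets, each refining some member of $\cF$ and hence missing $A$ or $B$. Put $Q=\bigcup\{C\in\cP: C\cap A\ne\es\}$. Then $Q$ contains $A$ and misses $B$, and both $Q$ and $X\setminus Q$ are countable unions of $CZ$-sets, hence $Zer_\si$. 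So $Q$ is a $CZ$-set; this is the paper's argument. In the example above, no $CZ$-set containing $A$ can lie inside $P'=A$, whereas $\R\setminus\{0\}$ does separate. So insisting on $Q\subset P'$ is precisely what blocked your approach.
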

\begin{proof}
$(\larr)$ Let $P=Q$ and $R=X\setminus Q$. Then $P$ and $R$ are $G_\de$,  $P\cap R=\es$, $A\subset P$ and $B\subset R$.

$(\rarr)$ Since $A$ and $B$ are $G_\de$-separated in $X$, there are $G_\de$ sets $P$ and $R$  such that $P\cap R=\es$, $A\subset P$ and $B\subset R$.
There are sequences $(U_n)_n$ and $(V_n)_n$ of open sets of $X$ such that $P=\bigcap_n U_n$, $R=\bigcap_n V_n$.
Let $n\in\om$. Since $A$ is Lindel\"{o}f  there is a cozero set $U_n'$ such that $A\subset U_n'\subset U_n$. Similarly, there is a cozero set $V_n'$ such that $B\subset V_n'\subset V_n$.

Let $\cF=\{X\setminus U_n', X\setminus V_n' : n\in\om  \}$. The countable family $\cF$ of zero-sets is a cover of $X$ because $\bigcap_n U'_n\cap\bigcap_n V'_n\subset P\cap R=\es$. For the family $\cF$ holds: if $F\in \cF$, then  $A\cap F=\es$ or $B\cap F=\es$.
By Lemma \ref{l:sp:1}, there is a countable partition $\cP$ of $X$ consisting of $CZ$-sets such that $\mathcal{P}$ is a refinement of $\cF$. Then $A\cap P=\es$ or $B\cap P=\es$ for $P\in\cP$. Let $Q=\bigcup\{P\in\cP: P\cap A\neq\es\}$. Then $A\subset Q$, $B\subset X\setminus Q$ and $Q$ is a $CZ$-set of $X$ ($Q$ and $X\setminus Q$ are countable unions of elements of $\cP$, hence $Zer_\si$-sets).
\end{proof}

Let $X$ be a space. Disjoint sets $A,B\subset X$ are called {\em $\om$-$G_\de$-separated} if the sets $A'$ and $B'$ are $G_\de$-separated for any countable $A'\subset A$ and $B'\subset B$.

\begin{definition}
A space $X$ is called a {\em $\rho_2$-space} if for $A\subset X$, from the fact that the sets $A$ and $X\setminus A$ are $\om$-$G_\de$-separated it follows that $A$ is a $CZ$-set.
\end{definition}

\begin{definition}
A space $X$ is called a {\em strongly $\rho_2$-space} if any $\om$-$G_\de$-separated sets $A,B\subset X$ are $CZ$-separated.
\end{definition}

\begin{proposition} \label{p:sp:2}
Let $X$ be a space. The space $K_1(X,\ot)$ is realcompact if and only if $X$ is a $\rho_2$-space.
\end{proposition}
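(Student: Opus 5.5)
The plan is to apply Proposition \ref{p:dense} with $Y=\ot$ and $Z=K_1(X,\ot)$. First I check that $Z$ is dense in $\ot^X$. Given a finite $K\subset X$ and a prescribed $u\colon K\to\ot$, each point of a Tychonoff space is closed, so every finite subset of $K$ is a closed set contained in a cozero neighbourhood avoiding the remaining points. Separating the points of $K$ by disjoint cozero sets, which are $CZ$-sets, and using Lemma \ref{l:glue}, gives an $f\in K_1(X,\ot)$ with $f\restriction K=u$. Then Proposition \ref{p:dense} says that $K_1(X,\ot)$ is realcompact iff for every $f\in\ot^X\setminus K_1(X,\ot)$ there is a countable $S\subset X$ such that no $h\in K_1(X,\ot)$ agrees with $f$ on $S$.

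Next I translate this into the language of sets. A function $f\in\ot^X$ is the indicator of $X\setminus A$, where $A=f^{-1}(0)$, and $f\in K_1(X,\ot)$ iff $A$ is a $CZ$-set. A function $h\in K_1(X,\ot)$ agrees with $f$ on a countable $S$ iff the $CZ$-set $Q=h^{-1}(0)$ satisfies $S\cap A\subset Q$ and $S\setminus A\subset X\setminus Q$. So the existence of such an $h$ means that $A'=S\cap A$ and $B'=S\setminus A$ are $CZ$-separated. Countable sets are Lindel\"of, so by Lemma \ref{lem2} this is equivalent to $A'$ and $B'$ being $G_\de$-separated. Every pair of countable $A'\subset A$ and $B'\subset X\setminus A$ arises this way from $S=A'\cup B'$. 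Hence ``every countable $S$ admits an $h\in K_1(X,\ot)$ agreeing with $f$ on $S$'' is equivalent to ``$A$ and $X\setminus A$ are $\om$-$G_\de$-separated''.

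Putting these together: the condition of Proposition \ref{p:dense} fails for some $f$ exactly when there is an $A$ that is not a $CZ$-set with $A$ and $X\setminus A$ $\om$-$G_\de$-separated. That is, it fails exactly when $X$ is not a $\rho_2$-space, which gives both directions.

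The only step needing care is the density of $K_1(X,\ot)$ in $\ot^X$, since Proposition \ref{p:dense} requires $Z$ to be dense. It is routine via finite cozero separations of points and Lemma \ref{l:glue}. The rest is bookkeeping through Lemma \ref{lem2}.
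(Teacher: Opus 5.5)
Your proposal is correct and follows essentially the same route as the paper: apply Proposition \ref{p:dense} to the dense subspace $K_1(X,\ot)$ of $\ot^X$, then use Lemma \ref{lem2} on the countable sets $S\cap A$ and $S\setminus A$ to turn the countable-agreement condition into $\om$-$G_\de$-separation of $A$ and $X\setminus A$. The only cosmetic difference is that you take $A=f^{-1}(0)$ instead of writing $f=\chi_A$; this is harmless, since complements of $CZ$-sets are $CZ$-sets.
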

\begin{proof}

Let $K=K_1(X,\ot)\subset\ot^X$; for $A\subset X$ the indicator function $\chi_A$ belongs to $K$ iff $A$ is a $CZ$-set. The space $K$ is dense in $\ot^X$, since finite subsets of $X$ are $CZ$-separated. For $A\subset X$ and a countable $S\subset X$ there is $h\in K$ with $h\restriction S=\chi_A\restriction S$ iff the countable sets $A\cap S$ and $S\setminus A$ are $CZ$-separated, iff (by Lemma \ref{lem2}, countable sets being Lindel\"of) they are $G_\de$-separated. Hence the condition of Proposition \ref{p:dense} for $Z=K$ reads: if $A$ and $X\setminus A$ are $\om$-$G_\de$-separated, then $\chi_A\in K$, i.e. $A$ is a $CZ$-set. This is exactly the definition of a $\rho_2$-space.

\end{proof}

A space $X$ is called {\em weakly hereditarily Baire} ($whb$-space) if any closed subset $F\subset X$ can be represented as a union of two sets $F_1$ and $F_2$, where $F_1$ is a Baire space and $F_2$ is countable.

\begin{proposition} \label{p:sp:whb}
Let $X$ be a separable metrizable $whb$-space. Then the space $K_1(X,\ot)$ is realcompact.
\end{proposition}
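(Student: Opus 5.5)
By Proposition \ref{p:sp:2} it suffices to show that $X$ is a $\rho_2$-space. So fix $A\subset X$ such that $A$ and $X\setminus A$ are $\om$-$G_\de$-separated; we must show that $A$ is a $CZ$-set. In a separable metrizable space zero sets are exactly closed sets, so $CZ$-sets are exactly the sets that are simultaneously $F_\si$ and $G_\de$ (the $\De^0_2$-sets). I would argue by contradiction, using the classical Hausdorff characterization of $\De^0_2$-sets in metrizable spaces as the resolvable sets. If $A$ is not $\De^0_2$, this gives a nonempty closed $F\subset X$ such that both $A\cap F$ and $F\setminus A$ are dense in $F$.

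The goal is then to upgrade $F$ to a nonempty closed set $H\subset X$ which is a Baire space and in which $A\cap H$ and $H\setminus A$ are still both dense. Once we have such an $H$, the contradiction is immediate:
\begin{itemize}
\item since $H$ is separable metrizable, choose countable sets $A'\subset A\cap H$ and $B'\subset H\setminus A$, each dense in $H$;
\item by the $\om$-$G_\de$-separation hypothesis there are disjoint $G_\de$-sets $P\supset A'$ and $R\supset B'$ of $X$;
\item then $P\cap H$ and $R\cap H$ are dense $G_\de$-subsets of the Baire space $H$, so they intersect, which is a contradiction.
\end{itemize}

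The main obstacle is producing $H$, and this is where the $whb$ hypothesis enters. I would proceed in three steps.
\begin{enumerate}
\item Write $F=F_1\cup F_2$ with $F_1$ Baire and $F_2$ countable.
\item Let $H=\overline{F_1}$. A space containing a dense Baire subspace is Baire, so $H$ is Baire.
\item The set $W=F\setminus H$ is relatively open in $F$ and contained in $F_2$, hence countable. If $W=\es$, then $H=F$ and we are done. Otherwise, replace $F$ by $\overline W$: since $W$ is open in $F$, the sets $A$ and $X\setminus A$ are still both dense in $\overline W$.
\end{enumerate}
So we reduce to the case where $F$ has a countable dense relatively open subset, and I would iterate.

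To make the iteration terminate I would argue via the perfect kernel. A countable closed set $F$ in which both $A\cap F$ and $F\setminus A$ are dense has no isolated points, so $F$ is a countable crowded metrizable space. Applying $whb$ to such an $F$, the Baire part $F_1$ is a countable Baire space and therefore has a dense set of isolated points. I expect to show that these points force some nonempty relatively open piece of $F$ to satisfy the requirements, or else yield a contradiction with crowdedness. If this direct route gets stuck, the fallback is a transfinite Cantor--Bendixson-type removal of relatively open countable pieces; since $X$ is second countable, this stabilizes after countably many steps.

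Once $H$ is obtained, the Baire category argument above finishes the proof, and Proposition \ref{p:sp:2} then gives that $K_1(X,\ot)$ is realcompact.
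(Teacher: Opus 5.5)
Your reduction to the $\rho_2$ property via Proposition~\ref{p:sp:2} is sound. It is slightly more direct than the paper, which deduces the statement from realcompactness of $B_1(X)$ for strongly $\rho_2$-spaces and the closedness of $K_1(X,\ot)$ in $B_1(X)$. Your closing Baire-category argument is exactly the one in Theorem~\ref{t:rho2}.

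\textbf{The gap is the construction of $H$.} The set $F$ delivered by the Hausdorff argument is an arbitrary witness of non-resolvability. Nothing in your argument prevents it from being a countable crowded closed set, e.g.\ a closed copy of $\mathbb{Q}$ in $X$ on which $A$ is dense and codense. For such an $F$ no nonempty closed $H\subset F$ can work:
\begin{itemize}
\item such an $H$ would be countable without isolated points, hence meager in itself;
\item the $whb$ hypothesis gives nothing there (take $F_1=\es$, $F_2=F$);
\item $\om$-$G_\de$-separation gives no contradiction either, since $A\cap F$ and $F\setminus A$ are then disjoint $G_\de$-sets of $X$.
\end{itemize}
So the ``direct route'' cannot succeed, and your iteration actually runs the wrong way: passing to $\overline W$ moves you \emph{into} the countable part, where no contradiction lives. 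This is also why a single Hausdorff witness carries no information. In non-Baire spaces only the implication ``resolvable $\Rightarrow\Delta^0_2$'' holds; the dyadic rationals in $\mathbb{Q}$ are $\Delta^0_2$ but not resolvable.

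The fallback, a Cantor--Bendixson removal of countable relatively open pieces inside $F$, also fails as stated. After the removal, $A$ and $X\setminus A$ need not remain dense in what is left. The process may also end at $\es$, which contradicts nothing, because $F$ was never shown to carry the failure of $\Delta^0_2$.

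\textbf{The missing idea} is a choice of $F$ on which $A$ fails to be $\Delta^0_2$ \emph{locally}, and this needs a global argument over $X$. The paper obtains it in Lemma~\ref{l:kernel}. Starting from $F_0=X$, it alternates taking $\overline{A\cap F_\xi}\cap\overline{B\cap F_\xi}$ with deleting the union of all countable relatively open subsets. It then shows that if the process reaches $\es$, the locally closed pieces glue into a $CZ$-set separating $A$ and $B$. The nonempty kernel therefore has $A$ and $B$ dense and no nonempty countable relatively open subsets, and your steps 1--2 (Lemma~\ref{l:whb}) make it Baire.

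For your $\rho_2$ case a shortcut also works:
\begin{itemize}
\item Let $V$ be the union of all open $U$ such that $A\cap U$ is both $F_\si$ and $G_\de$ in $U$.
\item Write $V=\bigcup_n U_n$ and glue along the locally closed sets $U_n\setminus\bigcup_{m<n}U_m$. This shows $A\cap V$ is $\Delta^0_2$ in $V$, so $F=X\setminus V\neq\es$.
\item For every open $O$ meeting $F$, the set $A\cap F\cap O$ is not $\Delta^0_2$ in $F\cap O$. Otherwise $A\cap O=(A\cap V\cap O)\cup(A\cap F\cap O)$ would be $\Delta^0_2$ in $O$, forcing $O\subset V$.
\item Every subset of a countable metrizable space is $\Delta^0_2$. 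Hence this $F$ has no nonempty countable relatively open subsets, and $A\cap F$, $F\setminus A$ are both dense in $F$.
\end{itemize}
By $whb$, this $F$ is itself Baire, and your final argument finishes the proof.
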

\begin{proof}
This follows from Corollary \ref{t:rc:1} below, since $K_1(X,\ot)$ is closed in $B_1(X)$.
\end{proof}

\begin{corollary}\label{c:sp:whb}
Let $X\subset \mathbb{R}$ be a Luzin set. Then the space $K_1(X,\ot)$ is realcompact.
\end{corollary}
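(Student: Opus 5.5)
The plan is to deduce the statement from Proposition \ref{p:sp:whb}, so the main work is to check that a Luzin set $X\subset\R$ is a separable metrizable $whb$-space. Separable metrizability is clear, since $X$ is a subspace of $\R$. Recall that a Luzin set is an uncountable $X\subset\R$ meeting every nowhere dense subset of $\R$ in a countable set; equivalently, every meager subset of $\R$ meets $X$ in a countable set. Two facts follow: every nowhere dense subset of $X$ (in the relative topology) is countable, and so is every meager subset of $X$. Indeed, if $N$ is nowhere dense in $X$, then its closure in $\R$ meets $X$ in a set with empty interior in $X$. Since $X$ is dense in the open set $\operatorname{int}\overline{X}$, that closure is nowhere dense in $\R$, and its trace on $X$ is countable.

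For the $whb$ property, take a closed $F\subset X$ and let $F_1$ be the set of points of $F$ which have a neighbourhood $V$ in $F$ such that $V$ is uncountable at every point. Let $F_1$ be the union of the open subset $U$ of $F$ consisting of points all of whose neighbourhoods in $F$ are uncountable. Concretely, $F\setminus U$ is the set of points having a countable neighbourhood in $F$. By second countability, $F\setminus U$ is countable; put $F_2=F\setminus U$ and $F_1=U$.

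It remains to see that $U$ is a Baire space. Let $G\subset U$ be nonempty open in $U$, and suppose $G$ is meager in $U$. Then $G$ is meager in $X$, since a nowhere dense subset of $U$ is nowhere dense in $F$ (as $U$ is open in $F$) and hence in $X$ (as $F$ is closed). By the Luzin property, $G$ is countable. On the other hand, $G$ is a nonempty open subset of $F$ contained in $U$, hence uncountable, which is a contradiction. So no nonempty open subset of $U$ is meager, i.e. $U$ is Baire.

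The expected obstacle is only the bookkeeping that nowhere density passes from $U$ to $F$, to $X$, and to $\R$. This holds because $U$ is open in $F$, $F$ is closed in $X$, and $X$ is dense in the interior of its closure in $\R$. With $X$ a separable metrizable $whb$-space, Proposition \ref{p:sp:whb} gives that $K_1(X,\ot)$ is realcompact.
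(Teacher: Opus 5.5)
Your reduction is the same as the paper's: you show that a Luzin set is a $whb$-space via the decomposition $F=(F\setminus c(F))\cup c(F)$, using that meager subsets of $X$ are countable, and then apply Proposition~\ref{p:sp:whb}. Your version is in fact slightly shorter. A nowhere dense subset of a subspace is always nowhere dense in the ambient space, so you do not need the paper's extra countable piece $F'\setminus W$. For the same reason, your appeal to the density of $X$ in $\operatorname{int}\overline{X}$ is also unnecessary.

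There is, however, a false claim that your argument relies on. The set $U$ of points of $F$ all of whose neighbourhoods in $F$ are uncountable is \emph{closed} in $F$, not open. Its complement $c(F)$, the union of all countable relatively open subsets of $F$, is open. For instance, if isolated points $x_n$ of $F$ converge to a point $x\in F$ all of whose neighbourhoods are uncountable, then $x\in U$ but $x\notin\operatorname{int}_F U$.

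So your sentence ``$G$ is a nonempty open subset of $F$ contained in $U$, hence uncountable'' is not justified, since a relatively open $G\subset U$ need not be open in $F$. The conclusion is still true, but it needs the countability of $c(F)$. Write $G=O\cap U$ with $O$ open in $F$. If $G$ were countable, then $O\subset G\cup c(F)$ would be a countable open subset of $F$. Hence $O\subset c(F)$ and $G=\es$, a contradiction.

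The other place where you invoke openness of $U$, passing nowhere density from $U$ to $F$, holds without it. Also delete your first, garbled definition of $F_1$. If $U$ were $\operatorname{int}_F(F\setminus c(F))$, then $F\setminus U$ would contain the boundary of $F\setminus c(F)$ in $F$. Its countability does not follow from second countability and would need another application of the Luzin property. With $U=F\setminus c(F)$ and the repair above, the proof is complete.
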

\begin{proof}
By Proposition \ref{p:sp:whb} it suffices to check that a Luzin set $X\subset\R$ (every nowhere dense subset of $\R$ meets $X$ in a countable set) is a $whb$-space. Let $F$ be closed in $X$. Let $C$ be the union of all countable relatively open subsets of $F$; $C$ is countable and $F'=F\setminus C$ is closed in $F$, and every nonempty relatively open subset of $F'$ is uncountable. Let $W$ be the interior in $\R$ of $\overline{F'}^{\R}$. The set $F'\setminus W$ lies in the boundary of the closed set $\overline{F'}^{\R}$, which is nowhere dense, so it is countable. Put $F_2=C\cup(F'\setminus W)$ and $F_1=F'\cap W$. The set $F_1$ is dense in the open set $W$, so a nowhere dense subset of $F_1$ is nowhere dense in $\R$ and hence countable. Therefore meager subsets of $F_1$ are countable, while nonempty open subsets of $F_1$ are uncountable; so $F_1$ is a Baire space.
\end{proof}

\begin{proposition} \label{p:sp:3}
Let $X$ be a space. If the space $K_1(X,\ot)$ is Lindel\"{o}f then $X$ is a strongly $\rho_2$-space.
\end{proposition}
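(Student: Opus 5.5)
We argue exactly as in the proof of Theorem \ref{t:pr:1}(2), replacing one closed set by a pair of $\om$-$G_\de$-separated sets. Let $A,B\subset X$ be $\om$-$G_\de$-separated. By Lemma \ref{lem2}, applied to countable (hence Lindel\"of) sets, for all countable $A'\subset A$, $B'\subset B$ there is a $CZ$-set $Q$ with $A'\subset Q$ and $B'\cap Q=\es$. For each such pair $(A',B')$ put
\[
\cF_{A',B'}=\{f\in K_1(X,\ot): f(A')\subset\{0\},\ f(B')\subset\{1\}\}.
\]
Each $\cF_{A',B'}$ is closed in $K_1(X,\ot)$, being an intersection of preimages of closed sets under evaluation maps. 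It is nonempty, since the indicator function of $X\setminus Q$ belongs to it.

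Next we check that the family $\{\cF_{A',B'}\}$ has the countable intersection property. Given countably many pairs $(A'_n,B'_n)$, put $A'=\bigcup_n A'_n$ and $B'=\bigcup_n B'_n$. These are still countable subsets of $A$ and $B$. Moreover $\cF_{A',B'}\subset\bigcap_n\cF_{A'_n,B'_n}$, and $\cF_{A',B'}$ is nonempty by the previous paragraph.

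Since $K_1(X,\ot)$ is Lindel\"of, a family of closed sets with the countable intersection property has nonempty intersection. So there is $f\in\bigcap\cF_{A',B'}$. Taking singletons $A'=\{a\}$ and $B'=\{b\}$ shows that $f(A)\subset\{0\}$ and $f(B)\subset\{1\}$. Put $Q=f^{-1}(0)$. Since $f\in K_1(X,\ot)$, the set $Q$ is a $CZ$-set. It contains $A$ and misses $B$, so $A$ and $B$ are $CZ$-separated, and hence $X$ is a strongly $\rho_2$-space.

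The one step needing care is the passage from countable $G_\de$-separation to $CZ$-separation, which is what makes each $\cF_{A',B'}$ nonempty. This is exactly where Lemma \ref{lem2} is used, and it applies because countable sets are Lindel\"of. The rest is the standard Lindel\"of and countable intersection property argument.
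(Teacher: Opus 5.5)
Your proposal is correct and follows the paper's proof essentially verbatim: closed sets indexed by pairs of countable subsets, nonempty by Lemma \ref{lem2}, closed under countable unions of the index pairs, then the Lindel\"of property yields a common element whose fibre is the separating $CZ$-set. The only difference is cosmetic: the paper requires the functions to be $1$ on $A'$ and $0$ on $B'$ and takes $Q=g^{-1}(1)$, while you swap the roles of $0$ and $1$.
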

\begin{proof}

Let $A,B\subset X$ be $\om$-$G_\de$-separated. For countable $A'\subset A$, $B'\subset B$ let $\Phi_{A',B'}=\{g\in K_1(X,\ot): g(A')\subset\{1\},\ g(B')\subset\{0\}\}$. These sets are closed in $K_1(X,\ot)$ and, by Lemma \ref{lem2}, nonempty (countable sets are Lindel\"of). The family $\{\Phi_{A',B'}\}$ has the countable intersection property, since a countable union of countable sets is countable. As $K_1(X,\ot)$ is Lindel\"of, there is $g$ in all $\Phi_{A',B'}$; then $Q=g^{-1}(1)$ is a $CZ$-set with $A\subset Q$ and $B\cap Q=\es$.

\end{proof}

\begin{question}\label{q:rho1}
Is every $\rho_2$-space a strongly $\rho_2$-space?
\end{question}

In a metrizable space $CZ$-sets are exactly the sets which are both $F_\si$ and $G_\de$.

\begin{lemma}\label{l:kernel}
Let $X$ be a separable metrizable space and let $A,B\subset X$ be $\om$-$G_\de$-separated sets which are not $CZ$-separated. Then there is a nonempty closed $F\subset X$ such that $A\cap F$ and $B\cap F$ are dense in $F$ and every nonempty relatively open subset of $F$ is uncountable.
\end{lemma}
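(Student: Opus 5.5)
The plan is to remove the part of $X$ where $A$ and $B$ can be $CZ$-separated locally, and to check that what remains is the required $F$. No transfinite iteration should be needed. Throughout I use that in a metrizable space open and closed sets are $CZ$-sets, so a closed or open subspace inherits the $CZ$-notions compatibly: a $CZ$-subset of an open or closed $Y\subset X$ is a $CZ$-set in $X$.

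\textbf{Step 1 (the good open set).} Let $G$ be the union of all open $U\subset X$ such that $A\cap U$ and $B\cap U$ are $CZ$-separated in $X$. Since $X$ is hereditarily Lindel\"of, $G=\bigcup_n U_n$ with each $U_n$ of this kind; for each $n$ fix a $CZ$-set $Q_n$ separating $A\cap U_n$ from $B\cap U_n$. Put $P_n=U_n\setminus\bigcup_{i<n}U_i$; these are pairwise disjoint $CZ$-sets covering $G$. Set $Q_G=\bigcup_n (Q_n\cap P_n)$. Both $Q_G$ and $G\setminus Q_G=\bigcup_n(P_n\setminus Q_n)$ are countable unions of $CZ$-sets, hence $Zer_\si$. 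The set $X\setminus G$ is closed, hence $Zer_\si$. Therefore $Q_G$ and $X\setminus Q_G$ are both $Zer_\si$, so $Q_G$ is a $CZ$-set. It separates $A\cap G$ from $B\cap G$.

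\textbf{Step 2 (gluing).} Let $F=X\setminus G$, which is closed. I claim: for every open $U$ with $U\cap F\neq\es$, the sets $A\cap U\cap F$ and $B\cap U\cap F$ are not $CZ$-separated. Suppose instead that a $CZ$-set $Q'$ separates them. Then $(Q'\cap F)\cup(Q_G\cap G)$ is a $CZ$-set, by the same gluing as in Lemma \ref{l:glue}, because $F$ and $G$ are complementary $CZ$-sets. This set separates $A\cap U$ from $B\cap U$, so $U\subset G$, a contradiction.

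Two special cases follow. Taking $U=X$ gives $F\neq\es$, since otherwise $Q_G$ would separate $A$ and $B$. Also, $A\cap U\cap F$ and $B\cap U\cap F$ must both be nonempty, because if one were empty, $Q'=X$ or $Q'=\es$ would separate them. Hence $A\cap F$ and $B\cap F$ are dense in $F$.

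\textbf{Step 3 (uncountability).} Suppose some nonempty relatively open set $U\cap F$ is countable. Then $A\cap U\cap F$ and $B\cap U\cap F$ are countable subsets of $A$ and $B$. By $\om$-$G_\de$-separation they are $G_\de$-separated. Being countable, they are Lindel\"of, so Lemma \ref{lem2} makes them $CZ$-separated, contradicting Step 2.

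The only delicate point is Step 2: the local non-separation must be checked inside $F$, not in $X$. That is why the gluing with $Q_G$ on the complement $G$ is done explicitly.
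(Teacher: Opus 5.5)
Your argument is correct, and it takes a genuinely different route from the paper's.

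\textbf{The paper's route.} It runs a Cantor--Bendixson-type transfinite derivation $F_{\xi+1}=(\overline{A\cap F_\xi}\cap\overline{B\cap F_\xi})\setminus c(F_\xi)$, which stabilizes at some $\zeta<\om_1$. It then shows that an empty kernel would let one assemble a $CZ$-separator layer by layer from the differences $F_\xi\setminus F_{\xi+1}$. In that assembly, $\om$-$G_\de$-separation is applied to each countable piece $c(F_\xi)$.

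\textbf{Your route.} You observe that $CZ$-separability of $A\cap U$ and $B\cap U$ for open $U$ is preserved under arbitrary unions of such $U$. This uses hereditary Lindel\"ofness together with the disjointification $P_n=U_n\setminus\bigcup_{i<n}U_i$ into $CZ$-pieces. You then take the largest such open set $G$ and read off every property of $F=X\setminus G$ from maximality. You use $\om$-$G_\de$-separation (via Lemma \ref{lem2}) only once, to exclude countable relatively open pieces.

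\textbf{What each buys.} Your approach avoids all ordinal bookkeeping. It also yields a kernel with a stronger, hereditary property: $A\cap U\cap F$ and $B\cap U\cap F$ are not $CZ$-separated for every open $U$ meeting $F$. The paper's construction instead produces the \emph{largest} closed set with the properties stated in the lemma. Any such set lies in every $F_\xi$, by induction on $\xi$, since its intersection with $c(F_\xi)$ is a countable relatively open subset of it. In particular, your $F$ is contained in the paper's $F_\zeta$.

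\textbf{A wording point.} $F\neq\es$ does not follow from your Step~2 claim, since that claim presupposes $U\cap F\neq\es$. It follows directly from Step~1, exactly as your parenthetical reason states.
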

\begin{proof}
For a closed $E\subset X$ let $c(E)$ be the union of all countable relatively open subsets of $E$; since $X$ is second countable, $c(E)$ is countable and relatively open in $E$. Define closed sets $F_\xi$ by $F_0=X$,
\[
F_{\xi+1}=\big(\overline{A\cap F_\xi}\cap\overline{B\cap F_\xi}\big)\setminus c(F_\xi),
\]
and $F_\la=\bigcap_{\xi<\la}F_\xi$ for limit $\la$. The sequence is decreasing, so $F_\zeta=F_{\zeta+1}$ for some $\zeta<\om_1$. If $F_\zeta\ne\es$, then $c(F_\zeta)=\es$ and $F=F_\zeta$ is as required.

Suppose $F_\zeta=\es$. For $\xi<\zeta$ let $V_\xi=c(F_\xi)$. The countable sets $A\cap V_\xi$ and $B\cap V_\xi$ are $G_\de$-separated: there are disjoint $G_\de$-sets $P_\xi\supset A\cap V_\xi$ and $R_\xi\supset B\cap V_\xi$. Since $V_\xi$ is locally closed, hence $G_\de$, the countable sets $P_\xi\cap V_\xi$ and $V_\xi\setminus P_\xi$ are both $F_\si$ and $G_\de$. Put
\[
Q=\bigcup_{\xi<\zeta}\Big(\big((F_\xi\setminus V_\xi)\setminus\overline{B\cap F_\xi}\big)\cup(P_\xi\cap V_\xi)\Big).
\]
Every point of $X$ lies in exactly one set $F_\xi\setminus F_{\xi+1}$, which is the disjoint union of $V_\xi$, $(F_\xi\setminus V_\xi)\setminus\overline{B\cap F_\xi}$ and $(F_\xi\setminus V_\xi)\cap\overline{B\cap F_\xi}\setminus\overline{A\cap F_\xi}$. A point of $A\cap(F_\xi\setminus V_\xi\setminus F_{\xi+1})$ lies in $\overline{A\cap F_\xi}$, hence not in $\overline{B\cap F_\xi}$; therefore $A\subset Q$, and clearly $Q\cap B=\es$. Both $Q$ and $X\setminus Q$ are countable unions of locally closed sets and countable sets, hence $F_\si$. Thus $Q$ is a $CZ$-set separating $A$ and $B$, a contradiction.
\end{proof}

\begin{lemma}\label{l:whb}
For a separable metrizable space $X$ the following are equivalent:
\begin{enumerate}
\item $X$ is a $whb$-space;
\item for every closed $F\subset X$ the set $F\setminus c(F)$ is a Baire space ($c(F)$ as in the proof of Lemma \ref{l:kernel});
\item every nonempty closed $F\subset X$ without nonempty countable relatively open subsets contains a nonempty relatively open Baire subspace.
\end{enumerate}
\end{lemma}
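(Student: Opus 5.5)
We prove (1)$\Rightarrow$(2)$\Rightarrow$(3)$\Rightarrow$(1). Throughout, $c(F)$ is the union of all countable relatively open subsets of $F$. By second countability $c(F)$ is countable and relatively open in $F$, so $F'=F\setminus c(F)$ is closed in $X$. Moreover, $F'$ has no nonempty countable relatively open subset: if $W\cap F'$ were countable and nonempty for some open $W$, then $W\cap F\subset (W\cap F')\cup c(F)$ would be a countable relatively open subset of $F$, forcing $W\cap F\subset c(F)$ and hence $W\cap F'=\es$.

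(1)$\Rightarrow$(2). Let $F$ be closed. Apply (1) to the closed set $F'$ to get $F'=F_1\cup F_2$ with $F_1$ Baire and $F_2$ countable. Let $U\subset F'$ be nonempty relatively open and suppose $U=\bigcup_n N_n$ with each $N_n$ nowhere dense in $F'$. The point is that $F_1$ is dense in $F'$: otherwise some nonempty relatively open subset of $F'$ lies inside the countable set $F_2$, which is impossible. Hence $U\cap F_1$ is a nonempty open subset of $F_1$, and each $N_n\cap F_1$ is nowhere dense in $F_1$, because nowhere density passes to dense subspaces. Each point of $F_2$ is nowhere dense in $F'$, since $F'$ has no isolated points. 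Write $U\cap F_1=\bigcup_n (N_n\cap F_1)$ and use that $U\cap F_1$ is nonmeager in the Baire space $F_1$, giving a contradiction. I expect this step, the transfer of meagerness between $F'$ and its dense Baire part, to be the only delicate point. Alternatively, argue directly that $U$ is a union of countably many nowhere dense sets in $F'$ together with the countable set $F_2$, all of which are meager, and contradict via $F_1$. Either route shows $F'$ is Baire.

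(2)$\Rightarrow$(3) is immediate. If $F$ has no nonempty countable relatively open subsets, then $c(F)=\es$, so $F$ itself is Baire by (2).

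(3)$\Rightarrow$(1). Let $F$ be closed and reduce to $F'=F\setminus c(F)$, since $c(F)$ is countable. Let $G$ be the union of all relatively open Baire subspaces of $F'$. A union of open Baire subspaces is Baire, by the standard Banach category argument or by choosing a maximal disjoint family, so $G$ is an open Baire subspace of $F'$. Put $E=F'\setminus G$, which is closed. We claim $E\setminus c(E)$ is empty, so that $E=c(E)$ is countable. If $E''=E\setminus c(E)\neq\es$, then (3) applied to $E''$ gives a nonempty open $V\subset E''$ that is Baire. To derive a contradiction we show $V$ meets no point of the closure of $G$ problematically, and then enlarge $G$. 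The cleaner way is transfinite: iterate $F_0=F'$, take $F_{\xi+1}$ to be the kernel of $F_\xi\setminus G_\xi$, and take intersections at limits. Each difference $F_\xi\setminus F_{\xi+1}$ is the union of a relatively open Baire piece and a countable piece. The process stabilizes at a countable ordinal. Finally verify that $F_1:=\bigcup_\xi G_\xi$ is Baire and the countable pieces give $F_2$. Since the $G_\xi$ are not open in $F$, their union need not be Baire, so the real obstacle is the choice of decomposition. I would first try to prove the claim using (3) directly on $\overline{G}\setminus G$ and the kernel $E''$, replacing $G$ by a maximal family of open Baire subsets whose union is dense in $F'$ modulo a countable set.
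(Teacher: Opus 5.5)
Your (1)$\Rightarrow$(2) and (2)$\Rightarrow$(3) are correct. Your (1)$\Rightarrow$(2) is essentially the paper's (1)$\Rightarrow$(3) argument, phrased via meager sets: a space with a dense Baire subspace is Baire. The genuine gap is (3)$\Rightarrow$(1), which you leave unfinished.

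The route you sketch there cannot work as stated. Applying (3) to $E''=E\setminus c(E)$ gives a Baire set $V$ that is open only in $E''$. Since $E=F'\setminus G$ is closed in $F'$ and may well be nowhere dense there, $V$ need not be relatively open in $F'$. So $V$ cannot be added to $G$, and no contradiction follows. The transfinite iteration runs into exactly the obstacle you name, and it is not needed.

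The missing idea is to apply (3) locally, to closures of open subsets of $F'$, rather than to the remainder $E$. Let $V\neq\es$ be relatively open in $F'$. Then $\overline V$ is closed and has no nonempty countable relatively open subsets. Indeed, if $O$ is open and $O\cap\overline V\neq\es$, then $O\cap V$ is a nonempty relatively open subset of $F'$, hence uncountable.

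By (3) there is a nonempty relatively open Baire subspace $U\subset\overline V$. Then $U\cap V$ is nonempty, because $V$ is dense in $\overline V$. It is open in $U$, hence Baire. It is also relatively open in $F'$: writing $U=O\cap\overline V$ gives $U\cap V=O\cap V$. Thus your $G$ meets every nonempty relatively open subset of $F'$, i.e. $G$ is dense in $F'$.

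Being a union of open Baire subspaces, $G$ is Baire. By the same dense-subspace fact you used in (1)$\Rightarrow$(2), $F'$ itself is Baire. Hence $G=F'$, $E=\es$, and $F=F'\cup c(F)$ is the required decomposition. This is exactly the paper's (3)$\Rightarrow$(2), followed by the trivial (2)$\Rightarrow$(1).
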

\begin{proof}
We use the following simple fact: a space containing a dense Baire subspace $D$ is a Baire space (if $U_n$ are open dense, then $U_n\cap D$ are open dense in $D$, so $\bigcap_n U_n\cap D$ is dense in $D$ and hence in the space).

(2)$\Rightarrow$(1): $F=(F\setminus c(F))\cup c(F)$ and $c(F)$ is countable.

(1)$\Rightarrow$(3): let $F=F_1\cup F_2$, $F_1$ Baire, $F_2$ countable, and let every nonempty relatively open subset of $F$ be uncountable. Then $F_1$ is dense in $F$, so $F$ is a Baire space.

(3)$\Rightarrow$(2): let $F'=F\setminus c(F)$. Then $F'$ is closed and has no nonempty countable relatively open subsets (if $F'\cap O$ is countable, $O$ open, then $F\cap O\subset (F'\cap O)\cup c(F)$ is countable, so $F\cap O\subset c(F)$). Let $V\ne\es$ be relatively open in $F'$. The closed set $\overline V$ also has no nonempty countable relatively open subsets, so by (3) it contains a nonempty relatively open Baire subspace $U$; then $U\cap V$ is a nonempty relatively open Baire subspace of $F'$. Hence the union $W$ of all relatively open Baire subspaces of $F'$ is dense in $F'$. The space $W$ is Baire, being locally Baire, and therefore $F'$ is a Baire space.
\end{proof}

\begin{theorem}\label{t:rho2}
Every separable metrizable $whb$-space is a strongly $\rho_2$-space.
\end{theorem}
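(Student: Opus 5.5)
We argue by contradiction, combining Lemma \ref{l:kernel} with Lemma \ref{l:whb}. Let $X$ be a separable metrizable $whb$-space. Suppose $A,B\subset X$ are $\om$-$G_\de$-separated but not $CZ$-separated. Lemma \ref{l:kernel} gives a nonempty closed $F\subset X$ with two properties: $A\cap F$ and $B\cap F$ are both dense in $F$, and every nonempty relatively open subset of $F$ is uncountable. By Lemma \ref{l:whb}(3), $F$ contains a nonempty relatively open Baire subspace $U$. Put $A_0=A\cap U$ and $B_0=B\cap U$. These sets are disjoint and dense in $U$, because $U$ is open in $F$.

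Next we pick countable dense subsets $A'\subset A_0$ and $B'\subset B_0$ of $U$; this is possible since $U$ is separable metrizable. By hypothesis $A'$ and $B'$ are $G_\de$-separated in $X$. So there are disjoint $G_\de$-sets $P\supset A'$ and $R\supset B'$. Then $P\cap U$ and $R\cap U$ are $G_\de$ in $U$, disjoint, and each contains a dense subset of $U$. A dense $G_\de$ subset of a metrizable space is a countable intersection of dense open sets. Hence $P\cap U$ and $R\cap U$ are both comeager in $U$, and their intersection is comeager in $U$. Since $U$ is a nonempty Baire space, that intersection is nonempty. This contradicts $P\cap R=\es$, so $A$ and $B$ are $CZ$-separated, which is what the theorem asserts.

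Most of the work is already done in Lemma \ref{l:kernel}; what remains is the Baire-category collision above. The one point to check carefully is the passage from $X$ to $U$ in that step. $U$ is relatively open in the closed set $F$, so traces of $G_\de$ subsets of $X$ on $U$ are $G_\de$ in $U$. Density of $A'$ and $B'$ in $U$ comes from density of $A\cap F$ and $B\cap F$ in $F$. Apart from this, the argument is routine. The uncountability condition from Lemma \ref{l:kernel} is used only to make Lemma \ref{l:whb}(3) applicable.
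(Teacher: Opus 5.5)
Your proposal is correct and is essentially the paper's proof: Lemma \ref{l:kernel} supplies $F$, and then countable dense $A'\subset A$, $B'\subset B$ give disjoint dense $G_\de$-sets in a nonempty Baire space, which is impossible. The only difference is that you use Lemma \ref{l:whb}(3) to get a relatively open Baire subspace $U\subset F$, while the paper uses Lemma \ref{l:whb}(2) with $c(F)=\es$ to conclude that $F$ itself is Baire; this is immaterial.
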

\begin{proof}
Let $A,B\subset X$ be $\om$-$G_\de$-separated, and suppose they are not $CZ$-separated. Let $F$ be as in Lemma \ref{l:kernel}. By Lemma \ref{l:whb}, $F=F\setminus c(F)$ is a Baire space. Take countable sets $A'\subset A\cap F$ and $B'\subset B\cap F$ dense in $F$. There are disjoint $G_\de$-sets $P\supset A'$ and $R\supset B'$. Then $P\cap F$ and $R\cap F$ are dense $G_\de$-subsets of the Baire space $F$, so they intersect, a contradiction.
\end{proof}

\begin{corollary}\label{c:rho2}
Hereditarily Baire (in particular, Polish) separable metrizable spaces, Luzin sets $X\subset\R$, and countable spaces are strongly $\rho_2$-spaces. In particular, the segment $[0,1]$ is a strongly $\rho_2$-space.
\end{corollary}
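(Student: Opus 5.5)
The plan is to derive everything from Theorem \ref{t:rho2}: for each class in the statement I will check that its members are separable metrizable $whb$-spaces.

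For a hereditarily Baire separable metrizable space $X$, every closed $F\subset X$ is itself a Baire space. So we may take $F_1=F$ and $F_2=\es$, and $X$ is a $whb$-space. A Polish space is hereditarily Baire, because every closed subspace of a Polish space is Polish and hence Baire by the Baire category theorem. The segment $[0,1]$ is Polish, so it is covered by the same argument.

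For a Luzin set $X\subset\R$, the $whb$ property was already verified in the proof of Corollary \ref{c:sp:whb}. That argument splits a closed $F\subset X$ as $F_1\cup F_2$ with $F_1=F'\cap W$ Baire and $F_2=C\cup(F'\setminus W)$ countable, and $X$ is separable metrizable as a subspace of $\R$.

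For a countable space $X$ the statement as written does not say "metrizable", so the definition of strongly $\rho_2$ must be checked directly rather than through Theorem \ref{t:rho2}. Let $A,B\subset X$ be $\om$-$G_\de$-separated. Since $X$ is countable, $A$ and $B$ are countable, so they are themselves $G_\de$-separated. Countable sets are Lindel\"of, so by Lemma \ref{lem2} they are $CZ$-separated. (If $X$ is separable metrizable, one can alternatively note that any closed $F\subset X$ is countable, so $F_1=\es$ and $F_2=F$ works.) The only step needing care is this choice of route for countable spaces; everything else is a direct appeal to earlier results.
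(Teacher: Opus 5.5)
Your proof is correct. For hereditarily Baire spaces (hence Polish spaces and $[0,1]$) and for Luzin sets you argue exactly as the paper does: you verify the $whb$ property and apply Theorem~\ref{t:rho2}. For hereditarily Baire spaces this uses $F_1=F$, $F_2=\es$; for Luzin sets it uses the decomposition from the proof of Corollary~\ref{c:sp:whb}.

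For countable spaces your main argument takes a different route. The paper also goes through Theorem~\ref{t:rho2}, with $F_1=\es$, $F_2=F$, which tacitly reads ``countable spaces'' under the ``separable metrizable'' qualifier. You instead work directly from the definition: $\om$-$G_\de$-separated subsets of a countable space are themselves countable, hence $G_\de$-separated, and Lemma~\ref{lem2} converts this into $CZ$-separation.

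Your route is shorter and avoids Lemmas~\ref{l:kernel} and~\ref{l:whb} entirely. It also proves more: every countable Tychonoff space, metrizable or not, is strongly $\rho_2$, whereas the paper's reduction covers only the metrizable ones. The paper's version gains only uniformity, since all three classes are handled by one theorem. Your parenthetical alternative is precisely the paper's argument.
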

\begin{proof}
A hereditarily Baire space is a $whb$-space (take $F_1=F$, $F_2=\es$), a countable space is a $whb$-space (take $F_1=\es$, $F_2=F$), and a Luzin set is a $whb$-space, as shown in the proof of Corollary \ref{c:sp:whb}. Apply Theorem \ref{t:rho2}.
\end{proof}

\begin{proposition}\label{p:sigma}
Let $X$ be a separable metrizable space and $X=\bigcup_{n\in\om}X_n$, where every $X_n$ is a closed strongly $\rho_2$-subspace of $X$. Then $X$ is a strongly $\rho_2$-space. In particular, every countable union of closed $whb$-subspaces (e.g., every $\si$-compact metrizable space) is a strongly $\rho_2$-space.
\end{proposition}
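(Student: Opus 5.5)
Let $A,B\subset X$ be $\om$-$G_\de$-separated. The idea is to separate $A$ and $B$ by a $CZ$-set on each closed piece, make the pieces disjoint, and glue. Since $X$ is metrizable, the $CZ$-sets of $X$ and of its subspaces are exactly the sets that are simultaneously $F_\si$ and $G_\de$, and we use this description throughout.

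First disjointify the cover. Put $Y_n=X_n\setminus\bigcup_{i<n}X_i$. Each $Y_n$ is the intersection of a closed set with an open set, so it is both $F_\si$ and $G_\de$ in $X$. The sets $Y_n$ partition $X$.

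Next separate on each piece. For each $n$, the sets $A\cap X_n$ and $B\cap X_n$ are $\om$-$G_\de$-separated in the subspace $X_n$. Indeed, take countable $A'\subset A\cap X_n$ and $B'\subset B\cap X_n$. There are disjoint $G_\de$-sets $P,R$ of $X$ containing $A'$ and $B'$, and then $P\cap X_n$ and $R\cap X_n$ are disjoint $G_\de$-sets of $X_n$. Since $X_n$ is a strongly $\rho_2$-space, there is a $CZ$-set $Q_n$ of $X_n$ with $A\cap X_n\subset Q_n$ and $B\cap X_n\subset X_n\setminus Q_n$.

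The one point to check is that $Q_n$ and $X_n\setminus Q_n$ are $F_\si$ and $G_\de$ in $X$, not just in $X_n$. Because $X_n$ is closed in $X$, an $F_\si$ subset of $X_n$ is $F_\si$ in $X$. Because $X_n$ is closed in a metrizable space, it is $G_\de$ in $X$, so a $G_\de$ subset of $X_n$ is $G_\de$ in $X$. Hence $Q_n$ and $X_n\setminus Q_n$ are $CZ$-sets of $X$.

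Now glue. Let $Q=\bigcup_n (Q_n\cap Y_n)$. Then $A\subset Q$ and $B\cap Q=\es$. Moreover $X\setminus Q=\bigcup_n\big((X_n\setminus Q_n)\cap Y_n\big)$, because the $Y_n$ partition $X$. Both $Q$ and $X\setminus Q$ are countable unions of finite intersections of $F_\si$-sets, hence $F_\si$. Each is therefore also $G_\de$, being the complement of the other. So $Q$ is a $CZ$-set of $X$ separating $A$ from $B$, and $X$ is a strongly $\rho_2$-space.

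For the "in particular": a closed subspace of a separable metrizable space is separable metrizable, so each closed $whb$-subspace is strongly $\rho_2$ by Theorem \ref{t:rho2}, and the first part applies. For a $\si$-compact metrizable space, write it as a countable union of compact sets. Compact metrizable spaces are Polish, hence hereditarily Baire, hence strongly $\rho_2$ by Corollary \ref{c:rho2}. A $\si$-compact metrizable space is also separable, so the hypothesis of the proposition is met.
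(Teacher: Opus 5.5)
Your proof is correct and matches the paper's argument: disjointify the cover via $Y_n=X_n\setminus\bigcup_{i<n}X_i$, separate $A\cap X_n$ from $B\cap X_n$ inside each $X_n$ using its strongly $\rho_2$ property, and glue $Q=\bigcup_n(Q_n\cap Y_n)$, noting that $Q$ and $X\setminus Q$ are both $F_\sigma$ in $X$. Your extra check that $Q_n$ is also $G_\delta$ in $X$ is harmless but not needed, since the gluing uses only the $F_\sigma$-ness of $Q_n$ and $X_n\setminus Q_n$.
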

\begin{proof}
Let $A,B\subset X$ be $\om$-$G_\de$-separated. Then $A\cap X_n$ and $B\cap X_n$ are $\om$-$G_\de$-separated in $X_n$, so there is a set $Q_n\subset X_n$ which is $F_\si$ and $G_\de$ in $X_n$ with $A\cap X_n\subset Q_n$ and $Q_n\cap B=\es$. Put $D_n=X_n\setminus\bigcup_{m<n}X_m$ and $Q=\bigcup_n(Q_n\cap D_n)$. Since $X_n$ is closed and $D_n$ is locally closed in $X$, the sets $Q=\bigcup_n(Q_n\cap D_n)$ and $X\setminus Q=\bigcup_n\big((X_n\setminus Q_n)\cap D_n\big)$ are $F_\si$ in $X$. Thus $Q$ is a $CZ$-set with $A\subset Q$ and $Q\cap B=\es$. The last assertion follows from Theorem \ref{t:rho2}, since compact metrizable spaces are Polish.
\end{proof}

For instance, $X=\Q\times\ot^\om$ is a $\si$-compact strongly $\rho_2$-space which is not a $whb$-space: $X$ has no nonempty countable open subsets and is not a Baire space.

Recall that a separable metrizable space $X$ is a {\it $\la$-set} if every countable subset of $X$ is a $G_\de$-set in $X$, and a {\it $Q$-set} if every subset of $X$ is an $F_\si$-set in $X$.

\begin{example}\label{ex:lambda}
Let $X$ be a $\la$-set. Then:
\begin{enumerate}
\item any two disjoint subsets of $X$ are $\om$-$G_\de$-separated;
\item $X$ is a $\rho_2$-space $\iff$ $X$ is a strongly $\rho_2$-space $\iff$ $X$ is a $Q$-set;
\item $K_1(X,\ot)$ is $G_\de$-dense in $\ot^X$, hence pseudocompact, and $K_1(X,\ot)$ is realcompact if and only if $K_1(X,\ot)=\ot^X$, if and only if $X$ is a $Q$-set.
\end{enumerate}
In particular, if there is a $\la$-set which is not a $Q$-set, then there is a separable metrizable space which is not a $\rho_2$-space, and for this space $X$ the space $K_1(X,\ot)$ is pseudocompact but not realcompact.
\end{example}
\begin{proof}
(1) If $A'\subset A$ and $B'\subset B$ are countable, then $A'$ and $B'$ are disjoint $G_\de$-sets in $X$.

(2) By (1), $X$ is $\rho_2$ (strongly $\rho_2$) iff every $A\subset X$ is a $CZ$-set, i.e. both $F_\si$ and $G_\de$. If every subset of $X$ is $F_\si$, then, passing to complements, every subset is also $G_\de$.

(3) A nonempty $G_\de$-subset of $\ot^X$ contains a set $\{g: g\restriction C=\xi\}$, where $C\subset X$ is countable and $\xi\in\ot^C$. The countable set $S=\xi^{-1}(1)$ is $F_\si$ and, since $X$ is a $\la$-set, $G_\de$ in $X$; so the indicator function of $S$ belongs to $K_1(X,\ot)$ and to this set. Thus $K_1(X,\ot)$ is $G_\de$-dense in the compact space $\ot^X$ and, hence, pseudocompact. A pseudocompact realcompact space is compact, so if $K_1(X,\ot)$ is realcompact then it is closed and dense in $\ot^X$, i.e. $K_1(X,\ot)=\ot^X$; the latter means that every subset of $X$ is a $CZ$-set, i.e. $X$ is a $Q$-set (cf. \cite[Proposition 2.14]{GOR26}).
\end{proof}

\begin{proposition}\label{p:lambdaY}
Let $X$ be a $\la$-set and $Y$ a space with $|Y|\ge2$. Then:
\begin{enumerate}
\item $K_1(X,Y)$ is $G_\de$-dense in $Y^X$; in particular, if $Y$ is compact, then $K_1(X,Y)$ is pseudocompact (e.g., $K_1(X,\ot)$ and $K_1(X,[0,1])$ are pseudocompact);
\item $K_1(X,Y)$ is realcompact if and only if $X$ is a $Q$-set and $Y$ is realcompact.
\end{enumerate}
In particular, if $X$ is a $\la$-set which is not a $Q$-set, then $K_1(X,Y)$ is not realcompact for any $Y$ with $|Y|\ge2$; for instance, $B_1(X)=K_1(X,\R)$ is not realcompact.
\end{proposition}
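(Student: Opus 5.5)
For (1), I will imitate the proof of Example \ref{ex:lambda}(3). A nonempty $G_\de$-subset of $Y^X$ contains a set $W=\{g\in Y^X: g\restriction C=\xi\}$ with $C\subset X$ countable and $\xi\in Y^C$. The goal is to find $f\in K_1(X,Y)$ with $f\restriction C=\xi$.

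Enumerate the countable set $\xi(C)=\{y_n:n<N\}$ with $N\le\om$. Put $C_n=\xi^{-1}(y_n)$ and let $f$ equal $y_n$ on $C_n$ for $n\ge1$ and $y_0$ on $X\setminus\bigcup_{n\ge1}C_n$. Every subset of the countable set $C$ is countable. So each $C_n$ is $F_\si$ in the metrizable space $X$, and it is $G_\de$ because $X$ is a $\la$-set. Thus each $C_n$ is a $CZ$-set, and so is $\bigcup_{n\ge1}C_n$: it is a countable set, hence $F_\si$ and (by the $\la$-property) $G_\de$. For a cozero $U\subset Y$, the preimage $f^{-1}(U)$ is a union of some of these countably many $CZ$-pieces, hence a $Zer_\si$-set. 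Therefore $f\in K_1(X,Y)$.

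If $Y$ is compact, then $K_1(X,Y)$ is $G_\de$-dense in a compact space, and a $G_\de$-dense subspace of a compact space is pseudocompact.

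For (2), sufficiency: if $X$ is a $Q$-set, every subset of $X$ is both $F_\si$ and $G_\de$, hence a $CZ$-set. I claim $K_1(X,Y)=Y^X$ once $Y$ is hereditarily Lindel\"of, e.g. separable metrizable, so that $f^{-1}(U)$ is a $Zer_\si$-set for any $U$. But for a general realcompact $Y$, a map $f$ with large image is a problem. I would handle this as follows. Since $X$ is separable metrizable, $|X|\le\mathfrak c$, and a $Q$-set is a $\sigma$-set with every subset relatively $F_\si$. I expect to show $K_1(X,Y)=Y^X$ still holds, via a factorization-type argument: the preimage $f^{-1}(U)$ of a cozero $U$ is some subset of $X$, and every subset of a $Q$-set is $F_\si$, hence $Zer_\si$ since $X$ is metrizable. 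So in fact $f^{-1}(U)\in Zer_\si(X)$ for \emph{every} $U$, giving $K_1(X,Y)=Y^X$ with no hypothesis on $Y$. Then $Y^X$ is realcompact because products of realcompact spaces are realcompact.

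Necessity in (2): $Y$ embeds as a closed subspace of $K_1(X,Y)$ via the constant maps. The constants form the diagonal, which is closed in $Y^X$ since $Y$ is Hausdorff. Closed subspaces of realcompact spaces are realcompact, so $Y$ is realcompact. For the $Q$-set condition, pick $a\ne b$ in $Y$. The set $\{g\in K_1(X,Y): g(X)\subset\{a,b\}\}$ is closed in $K_1(X,Y)$, since $\{a,b\}$ is closed in $Y$. This set is homeomorphic to $K_1(X,\ot)$: a map into $\{a,b\}$ is in $K_1(X,Y)$ iff both fibres are $CZ$-sets, which uses functional separation of $a,b$ by a cozero set, available as $Y$ is Tychonoff. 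Hence $K_1(X,\ot)$ is realcompact, and Example \ref{ex:lambda}(3) gives that $X$ is a $Q$-set. The final sentence of the proposition follows immediately from (2).

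The main obstacle I expect is checking that the fibres of a map into $\{a,b\}$ in $K_1(X,Y)$ are exactly the $CZ$-sets, i.e. that the closed subspace really is a copy of $K_1(X,\ot)$. This reduces to the fact that a cozero set of $Y$ containing $a$ but not $b$ exists.
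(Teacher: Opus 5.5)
Your proposal is correct and follows the paper's proof essentially step for step. It uses the same extension of $\xi$ by a constant (relying on the fact that countable subsets of a $\la$-set are $CZ$-sets), the same observation that for a $Q$-set every map lies in $K_1(X,Y)$ (so your initial worry about a general realcompact $Y$ is unnecessary, as you yourself conclude), and the same reduction of necessity to the closed copies of $Y$ and of $K_1(X,\ot)$ inside $K_1(X,Y)$, followed by Example \ref{ex:lambda}(3).
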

\begin{proof}
(1) A nonempty $G_\de$-subset of $Y^X$ contains a set $\{g: g\restriction C=\xi\}$, where $C\subset X$ is countable and $\xi\in Y^C$. Since $X$ is a $\la$-set, every subset of $C$ is a countable union of points and also a $G_\de$-set in $X$, so it is a $CZ$-set; in particular, $C$ and $X\setminus C$ are $CZ$-sets. Let $\xi(C)=\{y_n:n\in\om\}$, $S_n=\xi^{-1}(y_n)$, fix $y_0\in Y$, and let $g=\xi$ on $C$ and $g=y_0$ on $X\setminus C$. For a cozero set $U\subset Y$ the set $g^{-1}(U)$ is the union of the $S_n$ with $y_n\in U$ and, possibly, of $X\setminus C$; hence it is a $Zer_\si$-set and $g\in K_1(X,Y)$. If $Y$ is compact, then $K_1(X,Y)$ is a $G_\de$-dense subspace of the compact space $Y^X$ and, hence, is pseudocompact.

(2) If $X$ is a $Q$-set, then every subset of $X$ is a $CZ$-set, so $K_1(X,Y)=Y^X$, which is realcompact if $Y$ is. Conversely, let $K_1(X,Y)$ be realcompact. The constant functions form a closed subspace of $K_1(X,Y)$ homeomorphic to $Y$, so $Y$ is realcompact. Fix $y_0\ne y_1$ in $Y$ and cozero sets $U_i\ni y_i$ with $y_{1-i}\notin U_i$. A function $f\colon X\to\{y_0,y_1\}$ belongs to $K_1(X,Y)$ iff $f^{-1}(y_1)$ is a $CZ$-set. Hence $\{f\in K_1(X,Y): f(X)\subset\{y_0,y_1\}\}$ is a closed subspace of $K_1(X,Y)$ homeomorphic to $K_1(X,\ot)$. Therefore $K_1(X,\ot)$ is realcompact and, by Example \ref{ex:lambda}(3), $X$ is a $Q$-set.
\end{proof}

\begin{remark}
Compare Proposition \ref{p:lambdaY} with Corollary \ref{t:rc:1}: an uncountable $\la$-set which is not a $Q$-set is not a $whb$-space (a countable dense subset is a dense $G_\de$-set; see also \cite[Theorem 8.47]{Bukovsky2011}), so there is no contradiction. Note also that for every $\la$-set $X$ the space $B_1(X)$ is a Choquet space \cite[Theorem 1.1(iv)]{GOR26}; thus, if $X$ is a $\la$-set which is not a $Q$-set, then $B_1(X)$ is Choquet but not realcompact.
\end{remark}

\begin{remark}\label{r:lambdaQ}
There is a $\la$-set of cardinality $\mathfrak b$ in ZFC (Rothberger, see \cite[Theorem 5.6]{Miller1984}, \cite[Theorem 8.48, Corollary 8.49]{Bukovsky2011}), and every subset of a $\la$-set is a $\la$-set. A $Q$-set $X$ satisfies $2^{|X|}\le\mathfrak c$, since $X$ has at most $\mathfrak c$ $F_\si$-subsets, in particular, $|X|<\mathfrak c$ \cite[p.~313]{Bukovsky2011}. Hence a $\la$-set which is not a $Q$-set exists in each of the following cases: $2^{\aleph_0}<2^{\aleph_1}$ (take a $\la$-set of cardinality $\aleph_1$); $\mathfrak b=\mathfrak c$ (e.g., under MA); $2^{\mathfrak b}>\mathfrak c$; there are no uncountable $Q$-sets, e.g. in the model of \cite[Theorem 22]{Miller1979}, where every uncountable separable metrizable space has Borel order $\om_1$, so there are no uncountable $\si$-sets. We do not know whether such a $\la$-set exists in ZFC; this question was asked on MathOverflow \cite{MO507641}.
\end{remark}

\begin{question}\label{q:rho2}
Is every separable metrizable space a strongly $\rho_2$-space ($\rho_2$-space) in ZFC? In particular, is there in ZFC a $\la$-set which is not a $Q$-set?
\end{question}

\subsection{Realcompactness of $B_1(X)$}

\begin{lemma}\label{l:b1cz}
Let $X$ be a space and $f\colon X\to\R$. Then $f\in B_1(X)$ if and only if for all rational $r<s$ the sets $\{f\le r\}$ and $\{f\ge s\}$ are $CZ$-separated.
\end{lemma}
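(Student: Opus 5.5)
We prove the two implications separately, using the characterization $B_1(X)=K_1(X,\R)$ recalled in the preliminaries: $f\in B_1(X)$ if and only if $f^{-1}(U)\in Zer_\si(X)$ for every open $U\subset\R$.

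\emph{Necessity.} Let $f\in B_1(X)$ and let $r<s$ be rationals. Pick $t$ with $r<t<s$ and put
\[
Q=\{f<t\}=f^{-1}((-\infty,t)).
\]
This is a $Zer_\si$-set, being the preimage of an open set. Its complement $\{f\ge t\}=\bigcap_n f^{-1}((-\infty,t-1/n)\cup(t,\infty))$ is a countable intersection of $Zer_\si$-sets. That does not immediately give $Coz_\de$, so we argue more directly. The set $\{f\ge t\}$ is the complement of the $Zer_\si$-set $\{f<t\}$, hence it is $Coz_\de$. Symmetrically, $\{f\le t\}$ is the complement of $\{f>t\}\in Zer_\si$, hence it is $Coz_\de$. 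Choosing $t$ irrational does not resolve the issue either, since $\{f<t\}$ is still not obviously $Coz_\de$.

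The cleaner route is to take
\[
Q=f^{-1}\big((-\infty,\tfrac{r+s}{2}]\big).
\]
This set is $Coz_\de$, because it is the complement of $f^{-1}((\tfrac{r+s}2,\infty))\in Zer_\si$. It is not obviously $Zer_\si$, so we refine it as follows. Write $(-\infty,s)$ as the union $\bigcup_n(-\infty,s-1/n]$. Each $Z_n=\{f\le s-1/n\}$ is $Coz_\de$, and each open preimage is $Zer_\si$. This suggests applying Lemma \ref{l:sp:1}-style partitioning to a countable cover by $Coz_\de$ and $Zer_\si$ pieces, and that is awkward.

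Instead we use the equivalent description of Baire-one functions as pointwise limits $f=\lim f_n$ of continuous functions. Put
\[
Q=\bigcup_{N}\bigcap_{n\ge N}\{f_n\le t\},\qquad r<t<s.
\]
Each $\{f_n\le t\}$ is a zero set, so $Q$ is $Zer_\si$. We have $\{f<t\}\subset Q\subset\{f\le t\}$. The complement of $Q$ is $\bigcap_N\bigcup_{n\ge N}\{f_n>t\}$, which is $Coz_\de$; hence $Q$ is $Coz_\de$ as well, i.e.\ $Q$ is a $CZ$-set. It contains $\{f\le r\}$ and misses $\{f\ge s\}$, which is the required $CZ$-separation.

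\emph{Sufficiency.} Suppose that for all rationals $r<s$ there is a $CZ$-set $Q_{r,s}$ with $\{f\le r\}\subset Q_{r,s}$ and $Q_{r,s}\cap\{f\ge s\}=\es$. We show that $f^{-1}((-\infty,a))\in Zer_\si$ for every real $a$. We claim
\[
\{f<a\}=\bigcup_{r<s<a,\ r,s\in\Q} Q_{r,s}.
\]
For $\subset$: if $f(x)<a$, choose rationals $f(x)\le r<s<a$; then $x\in\{f\le r\}\subset Q_{r,s}$. For $\supset$: if $x\in Q_{r,s}$ then $f(x)<s<a$. This is a countable union of $Zer_\si$-sets, hence it is $Zer_\si$. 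Symmetrically, using the complements $X\setminus Q_{r,s}$, which are also $CZ$-sets, we get
\[
\{f>a\}=\bigcup_{a<r<s}(X\setminus Q_{r,s})\in Zer_\si .
\]
Every open $U\subset\R$ is a countable union of intervals $(a,b)=(-\infty,b)\cap(a,\infty)$. Since $Zer_\si$ is closed under finite intersections and countable unions, $f^{-1}(U)\in Zer_\si$, and therefore $f\in B_1(X)$.

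The main point is in the necessity direction: the obvious choice $\{f<t\}$ is only $Zer_\si$, not $CZ$. This is resolved by building the separating set $Q$ from an approximating sequence of continuous functions, which makes $Q$ simultaneously $Zer_\si$ and $Coz_\de$.
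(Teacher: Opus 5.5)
Your sufficiency direction is correct and coincides with the paper's: you express $\{f<a\}$ and $\{f>a\}$ as countable unions of the separating sets $Q_{r,s}$ and of their complements. The necessity direction has a genuine gap. From $X\setminus Q=\bigcap_N\bigcup_{n\ge N}\{f_n>t\}\in Coz_\de$ you infer $Q\in Coz_\de$, but a $Coz_\de$ complement only says again that $Q\in Zer_\si$. To get $Q\in Coz_\de$ you need $X\setminus Q\in Zer_\si$, and this is false in general. Take $X=[0,1]$, let $(q_k)_{k\ge1}$ enumerate $\Q\cap[0,1]$, and let $f_n(x)=\frac1n\min\{1,\operatorname{dist}(x,\{q_1,\dots,q_n\})\}$. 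These continuous functions converge uniformly to $f\equiv0$, and for $t=0$ your set is $Q=\bigcup_N\{q_1,\dots,q_N\}=\Q\cap[0,1]$, which is not $G_\de$. Nor does a better choice of $t$ help. On $[0,1]^2$, take $f(x,y)=x$ and $f_n(x,y)=x+\frac1n\min\{1,\operatorname{dist}(y,\{q_1,\dots,q_n\})\}$. Then for every $t\in(0,1)$ your set $Q_t$ meets the segment $\{t\}\times[0,1]$ in $\{t\}\times(\Q\cap[0,1])$, so $Q_t$ is not $G_\de$.

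Your distrust of the choice $\{f<t\}$ was justified. The paper's own one-line necessity argument has exactly this defect: it asserts that $\{f<s\}=\bigcap_n\{f\le s-\frac1n\}$ is $Coz_\de$, but the correct identity has a union, and $\{f<s\}$ need not be $Coz_\de$. For instance, the lower semicontinuous $g$ on $[0,1]$ with $g(q_k)=-1/k$ and $g=0$ at irrationals has $\{g<0\}=\Q\cap[0,1]$.

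The missing idea is the reduction step you dismissed as awkward, applied to zero sets rather than to $Coz_\de$ pieces. Write $\{f<s\}=\bigcup_n A_n$ and $\{f>r\}=\bigcup_n B_n$ with zero sets $A_n,B_n$. Since $r<s$, these countably many zero sets cover $X$, so Lemma \ref{l:sp:1} yields a countable partition of $X$ into $CZ$-sets refining $\{A_n,B_n:n\in\om\}$. Let $Q$ be the union of the pieces contained in some $A_n$; every other piece lies in some $B_n$. Then $Q$ and $X\setminus Q$ are countable unions of $CZ$-sets, hence $Zer_\si$, so $Q$ is a $CZ$-set. Finally, $Q\subset\{f<s\}$ and $X\setminus Q\subset\{f>r\}$ give $\{f\le r\}\subset Q$ and $Q\cap\{f\ge s\}=\es$. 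This is the argument from the proof of Lemma \ref{lem2}, and it needs no approximating sequence.
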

\begin{proof}
If $f\in B_1(X)$, then $\{f<s\}$ is a $Zer_\si$-set and $\{f<s\}=\bigcap_n\{f\le s-\frac1n\}$ is a $Coz_\de$-set, so it separates $\{f\le r\}$ and $\{f\ge s\}$. Conversely, let $H_{r,r'}$ be a $CZ$-set with $\{f\le r\}\subset H_{r,r'}$ and $H_{r,r'}\cap\{f\ge r'\}=\es$. Then $\{f<s\}=\bigcup\{H_{r,r'}:r<r'<s\}$ is a $Zer_\si$-set; similarly $\{f>s\}$ is a $Zer_\si$-set. Hence $f\in B_1(X)$.
\end{proof}

\begin{theorem}\label{t:rc:A}
If $X$ is a strongly $\rho_2$-space, then $B_1(X)$ is realcompact.
\end{theorem}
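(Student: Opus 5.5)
We prove realcompactness through the criterion of Proposition \ref{p:dense}, applied with $Y=\R$ and $Z=B_1(X)$. First, $B_1(X)$ is dense in $\R^X$. Indeed, $C(X)\subset B_1(X)$, and since $X$ is Tychonoff, $C(X)$ already interpolates arbitrary real values on any finite set. By Proposition \ref{p:dense} (equivalently Remark \ref{r:os1}), it therefore suffices to prove the following. Let $f\colon X\to\R$ be such that for every countable $S\subset X$ there is $h_S\in B_1(X)$ with $h_S\restriction S=f\restriction S$. Then $f\in B_1(X)$.

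To verify $f\in B_1(X)$ we use Lemma \ref{l:b1cz}. Fix rationals $r<s$ and put $A=\{f\le r\}$ and $B=\{f\ge s\}$; these sets are disjoint. We must show that $A$ and $B$ are $CZ$-separated. Since $X$ is a strongly $\rho_2$-space, it is enough to show that $A$ and $B$ are $\om$-$G_\de$-separated. So take countable sets $A'\subset A$ and $B'\subset B$, and let $S=A'\cup B'$, which is countable.

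Choose $h=h_S\in B_1(X)$ with $h\restriction S=f\restriction S$. Then $A'\subset\{h\le r\}$ and $B'\subset\{h\ge s\}$. Since $h\in B_1(X)$, the first half of Lemma \ref{l:b1cz} gives a $CZ$-set $Q$ with $\{h\le r\}\subset Q$ and $Q\cap\{h\ge s\}=\es$. For instance, $Q=\{h<s\}$ works. Both $Q$ and $X\setminus Q$ are $Coz_\de$-sets, hence $G_\de$. Thus $A'$ and $B'$ are $G_\de$-separated, by the $(\larr)$ direction of Lemma \ref{lem2}. Hence $A$ and $B$ are $\om$-$G_\de$-separated.

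The strongly $\rho_2$ property now yields a $CZ$-set separating $A$ from $B$. As this holds for all rationals $r<s$, Lemma \ref{l:b1cz} gives $f\in B_1(X)$. Proposition \ref{p:dense} then shows that $B_1(X)$ is realcompact.

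The argument is short once the right reduction is made, so the only real care is in the bookkeeping around Proposition \ref{p:dense}. One must check that $B_1(X)$ is dense in $\R^X$. One must also check that the $\aleph_0$-placedness condition is exactly "agreement with some Baire-one function on every countable set." Both of these are recorded in Remark \ref{r:os1}.

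The key point is that the strong form of $\rho_2$ is needed here. The plain $\rho_2$ property only handles pairs $A$, $X\setminus A$. Here, however, $A=\{f\le r\}$ and $B=\{f\ge s\}$ are not complementary, because of the gap $r<s$. This is exactly why the hypothesis is "strongly $\rho_2$".
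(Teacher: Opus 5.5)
Your proof is correct and is the paper's argument in contrapositive form. The paper takes $f\notin B_1(X)$ and uses Lemma \ref{l:b1cz} together with the strongly $\rho_2$ property to find countable $A\subset\{f\le r\}$ and $B\subset\{f\ge s\}$ that are not $G_\de$-separated; it then observes, exactly as in your step with $h_S$, that no Baire-one function agrees with $f$ on $A\cup B$, and writes $B_1(X)$ directly as an intersection of cozero subsets of $\R^X$ instead of citing Proposition \ref{p:dense}, so it needs no density check.
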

\begin{proof}
Let $f\in\R^X\setminus B_1(X)$. By Lemma \ref{l:b1cz}, there are $r<s$ such that $\{f\le r\}$ and $\{f\ge s\}$ are not $CZ$-separated; since $X$ is a strongly $\rho_2$-space, they are not $\om$-$G_\de$-separated. So there are countable $A\subset\{f\le r\}$ and $B\subset\{f\ge s\}$ which are not $G_\de$-separated. The set
$W_f=\{g\in\R^X: g\restriction(A\cup B)=f\restriction(A\cup B)\}$ is a zero set of $\R^X$ containing $f$. If $g\in W_f\cap B_1(X)$, then $\{g\le r\}\supset A$ and $\{g\ge s\}\supset B$ are disjoint $Coz_\de$-sets, hence $G_\de$-sets, a contradiction. Thus $W_f\cap B_1(X)=\es$, and $B_1(X)=\bigcap\{\R^X\setminus W_f: f\in\R^X\setminus B_1(X)\}$ is an intersection of cozero subsets of the realcompact space $\R^X$. Cozero subsets of realcompact spaces are realcompact, and intersections of realcompact subspaces are realcompact \cite{Eng}. Hence $B_1(X)$ is realcompact.
\end{proof}

\begin{corollary}\label{t:rc:1}
Let $X$ be a separable metrizable $whb$-space (e.g., a hereditarily Baire separable metrizable space, or a Luzin set). Then $B_1(X)$ and $K_1(X,\ot)$ are realcompact.
\end{corollary}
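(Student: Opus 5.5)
The corollary is obtained by chaining three results already in the paper: Theorem \ref{t:rho2}, Theorem \ref{t:rc:A}, and the fact that $K_1(X,\ot)$ is a closed subspace of $B_1(X)$.

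First, Theorem \ref{t:rho2} says that every separable metrizable $whb$-space is a strongly $\rho_2$-space, so $X$ is strongly $\rho_2$. Theorem \ref{t:rc:A} then gives that $B_1(X)$ is realcompact.

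Next, recall from the Preliminaries that $K_1(X,\ot)=\{f\in B_1(X): f(X)\subset\ot\}$. This is a closed subspace of $B_1(X)$, since it is the intersection of $B_1(X)$ with the closed set $\ot^X\subset\R^X$. Closed subspaces of realcompact spaces are realcompact \cite{Eng}, so $K_1(X,\ot)$ is realcompact as well. Alternatively, one can argue directly: strongly $\rho_2$ implies $\rho_2$ (take $B=X\setminus A$), and then Proposition \ref{p:sp:2} applies.

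For the two examples in parentheses, we only need that they are $whb$-spaces. A hereditarily Baire space is $whb$ via the decomposition $F_1=F$, $F_2=\es$. A Luzin set is $whb$ by the argument in the proof of Corollary \ref{c:sp:whb}; alternatively, Corollary \ref{c:rho2} states directly that both classes are strongly $\rho_2$.

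None of these steps is a genuine obstacle, since all the substantive work sits in Lemma \ref{l:kernel} and Theorem \ref{t:rho2}. The only point needing care is circularity. Proposition \ref{p:sp:whb} and Corollary \ref{c:sp:whb} cite this corollary, so the Luzin-set example must be handled through the independent $whb$ argument given in the proof of Corollary \ref{c:sp:whb}, not through the conclusion of that corollary.
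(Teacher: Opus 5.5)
Your proposal is correct and is the paper's own argument: Theorem~\ref{t:rho2} makes $X$ strongly $\rho_2$, Theorem~\ref{t:rc:A} then gives that $B_1(X)$ is realcompact, and $K_1(X,\ot)=B_1(X)\cap\ot^X$ is realcompact as a closed subspace. Your circularity remark is also apt: the $whb$ property of Luzin sets should be taken from the argument inside the proof of Corollary~\ref{c:sp:whb}, which does not depend on this corollary, and Corollary~\ref{c:rho2} uses it the same way.
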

\begin{proof}
This follows from Theorems \ref{t:rho2} and \ref{t:rc:A}, since $K_1(X,\ot)$ is closed in $B_1(X)$.
\end{proof}

\begin{remark}
If $B_1(X)$ is realcompact, then so is its closed subspace $K_1(X,\ot)$, and, by Proposition \ref{p:sp:2}, $X$ is a $\rho_2$-space. Thus
strongly $\rho_2$ $\Rightarrow$ $B_1(X)$ is realcompact $\Rightarrow$ $K_1(X,\ot)$ is realcompact $\iff$ $\rho_2$.
\end{remark}

By Proposition \ref{p:sigma} and Theorem \ref{t:rc:A}, $B_1(X)$ and $K_1(X,\ot)$ are also realcompact if $X$ is a countable union of closed $whb$-subspaces, e.g., if $X$ is a $\si$-compact metrizable space.

By Proposition \ref{p:dense}, $B_1(X)$ is realcompact if and only if every function $f\colon X\to\R$ which coincides on every countable subset of $X$ with some Baire-one function is itself Baire-one; by Lemma \ref{l:b1cz} this concerns the pairs of sets $\{f\le r\}$, $\{f\ge s\}$, whereas realcompactness of $K_1(X,\ot)$ concerns only the pairs $A$, $X\setminus A$. For separable metrizable $whb$-spaces both spaces are realcompact (Corollary \ref{t:rc:1}), and for $\la$-sets both are realcompact if and only if $X$ is a $Q$-set (Example \ref{ex:lambda} and Proposition \ref{p:lambdaY}). A positive answer to Question \ref{q:rho1} would imply a positive answer to the following question.

\begin{question}\label{q:K1B1}
Let $X$ be a space such that $K_1(X,\ot)$ is realcompact, i.e., $X$ is a $\rho_2$-space. Is $B_1(X)$ realcompact?
\end{question}

All separable metrizable spaces $X$ for which we know that $B_1(X)$ (or $K_1(X,\ot)$) is not realcompact are $\la$-sets which are not $Q$-sets (Proposition \ref{p:lambdaY}); their existence is known only under additional set-theoretic assumptions (Remark \ref{r:lambdaQ}). On the other hand, a counterexample cannot be a $whb$-space or a countable union of closed $whb$-subspaces (Corollary \ref{t:rc:1}, Proposition \ref{p:sigma}).

\begin{question}\label{q:ZFC}
Is there in ZFC a separable metrizable space $X$ such that $B_1(X)$ is not realcompact? such that $K_1(X,\ot)$ is not realcompact (i.e., $X$ is not a $\rho_2$-space, cf. Question \ref{q:rho2})?
\end{question}

Analytic spaces are natural candidates for a positive result. If $X$ is an analytic space and $A,B\subset X$ are $\om$-$G_\de$-separated but not $CZ$-separated, then the set $F$ of Lemma \ref{l:kernel} is meager in its closure $K$ in a completion of $X$. Indeed, take countable sets $A'\subset A\cap F$ and $B'\subset B\cap F$ dense in $F$ and disjoint $G_\de$-sets $P\supset A'$, $R\supset B'$ of $X$; extending them to $G_\de$-sets $\tilde P$, $\tilde R$ of $K$, we get a dense $G_\de$-subset $\tilde P\cap\tilde R$ of $K$ disjoint from $F$. Since $F$ is analytic, it has the Baire property in $K$, but we do not know whether this leads to a contradiction.

\begin{question}\label{q:analytic}
Let $X$ be a Suslin (analytic) separable metrizable space. Is $B_1(X)$ realcompact? Is $X$ a strongly $\rho_2$-space?
\end{question}

\begin{remark}
For hereditarily Baire separable metrizable spaces Corollary \ref{t:rc:1} can also be proved using fragmentability; we sketch this argument.
\end{remark}

Recall \cite{JR} that for some $\varepsilon>0$ a function $f: X\rightarrow (Y,d)$ from a topological space to a metric one is
said to be $\varepsilon$-{\it fragmented}, if for every closed nonempty set $F\subseteq X$ there exists a nonempty relatively open
set $U\subseteq F$ such that $diam f(U)<\varepsilon$. If $f$ is $\varepsilon$-fragmented for every $\varepsilon>0$, then it is called fragmented.
If $X$ is hereditarily Baire and metrizable, then every Baire-one map $f:X \rightarrow (Y, d)$ is barely continuous (i.e., for
every nonempty closed set $F\subseteq X$ the restriction $f\upharpoonright F$ has a point of continuity) and, hence, is fragmented
(see \cite{Kur}, 31.X). If $X$ is a paracompact space in which every closed set is $G_{\delta}$, then every fragmented map $f:X\rightarrow (Y,d)$ is Baire-one in the case either $\dim X=0$, or $Y$ is a contractible locally path-connected space \cite{KM}. Sketch of the alternative proof for a hereditarily Baire separable metrizable $X$: let $f\in \mathbb{R}^X\setminus B_1(X)$. By \cite{KM}, $f$ is not fragmented, so there are $\varepsilon>0$ and a closed nonempty $F\subseteq X$ such that $\operatorname{diam} f(U)\geq\varepsilon$ for any nonempty relatively open $U\subseteq F$. Using that $F$ is a Baire space one finds rationals $r<s$ and a nonempty relatively open $W\subset F$ such that $\{x\in W: f(x)<r\}$ and $\{x\in W: f(x)>s\}$ are dense in $W$; countable dense subsets $A$, $B$ of these sets give a zero set $W_f\ni f$ disjoint from $B_1(X)$, since a Baire-one function on the Baire space $\overline W$ has a point of continuity.

\begin{question}\label{q:whb}
Let $X$ be a (non-separable) $whb$-space. Is it true that the space $B_1(X)$ is realcompact? Is $K_1(X,\ot)$ realcompact, i.e., is $X$ a $\rho_2$-space? Is $X$ a strongly $\rho_2$-space?
\end{question}

\section{$nCM$-spaces}\label{sec:ncm}

A topological space $X$ is
an $nCM$-space if no continuous mapping from $X$ into $[0, 1]$ is a surjection, i.e., if
$X$ cannot be continuously mapped onto $[0, 1]$ (see p.304 in \cite{Bukovsky2011}). By Remark \ref{r:cantor}, $X$ is an $nCM$-space if and only if $f(X)$ contains no uncountable compact set for every $f\in C(X)$, i.e. iff condition (3) of Proposition \ref{pr1} holds.

\medskip

The following statement is obvious.

\begin{proposition}\label{p:ncm:1}
A continuous image of an $nCM$-space is an $nCM$-space.
\end{proposition}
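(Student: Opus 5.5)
The proof goes by contraposition, using only that continuous maps compose. Let $X$ be an $nCM$-space and let $g\colon X\to Y$ be a continuous surjection onto a space $Y$. We must show that $Y$ is an $nCM$-space. Suppose, to the contrary, that there is a continuous surjection $q\colon Y\to[0,1]$.

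Consider the composition $q\circ g\colon X\to[0,1]$. It is continuous, being a composition of continuous maps. It is also onto: for every $t\in[0,1]$ pick $y\in Y$ with $q(y)=t$, and then $x\in X$ with $g(x)=y$; this uses that $g$ is surjective. So $X$ maps continuously onto $[0,1]$, which contradicts the assumption that $X$ is an $nCM$-space. Hence no continuous surjection $Y\to[0,1]$ exists, and $Y$ is an $nCM$-space.

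There is no real obstacle. The only point to keep in mind is that "image" means the full image $g(X)$, so $g$ is onto $Y$; this is exactly what makes $q\circ g$ onto $[0,1]$.

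One could also argue through the equivalent formulation in Remark \ref{r:cantor}: if $q\in C(Y)$ had an uncountable compact set inside $q(Y)$, then $q\circ g\in C(X)$ would have the same compact set inside $(q\circ g)(X)=q(Y)$. The direct argument above is shorter.
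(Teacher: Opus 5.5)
Your proof is correct: composing a hypothetical continuous surjection $q\colon Y\to[0,1]$ with the surjection $g$ gives a continuous surjection $q\circ g\colon X\to[0,1]$, contradicting that $X$ is an $nCM$-space. The paper gives no written proof and simply calls the statement obvious, so this composition argument is presumably exactly what it has in mind.
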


\begin{proposition}[{\cite[Theorem 8.22]{Bukovsky2011}}]\label{p:ncm:2}
Any Tychonoff $nCM$-space $X$ has a clopen base, i.e., $X$ is zero-dimensional. If $X$ is a normal $nCM$-space then $\operatorname{Ind} X=0$, i.e., for any disjoint closed
sets $A,B\subseteq X$ there exists a clopen set $U$ such that $A\subseteq U$ and $U\cap B=\emptyset.$
\end{proposition}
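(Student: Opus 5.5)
The first assertion uses the same trick as in the proof of Theorem \ref{t:pr:1}(1). Let $x\in X$ and let $F$ be a closed set with $x\notin F$. Since $X$ is Tychonoff, there is a continuous $f\colon X\to[0,1]$ with $f(x)=0$ and $f(F)\subset\{1\}$. Because $X$ is an $nCM$-space, $f$ is not onto $[0,1]$. Since $0$ and $1$ are values of $f$, there is $a\in(0,1)\setminus f(X)$. Then
\[
W=f^{-1}([0,a))=f^{-1}([0,a])
\]
is both open and closed, and $x\in W\subset X\setminus F$. Hence the clopen sets form a base, so $X$ is zero-dimensional.

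For the second assertion, let $X$ be normal and let $A,B\subseteq X$ be disjoint closed sets. We may assume both are nonempty; otherwise take $U=X$ or $U=\es$. By Urysohn's lemma there is a continuous $f\colon X\to[0,1]$ with $f(A)\subset\{0\}$ and $f(B)\subset\{1\}$. As before, $f(X)\ne[0,1]$ and $0,1\in f(X)$, so some $a\in(0,1)$ is omitted by $f$. Put $U=f^{-1}([0,a))$. This set is clopen, contains $A$, and misses $B$.

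This separation property for all pairs of disjoint closed sets is the standard characterization of $\operatorname{Ind}X=0$ for normal spaces. Indeed, given a closed set $A$ inside an open set $V$, separate $A$ from $X\setminus V$ by a clopen $U$. Then $A\subset U\subset V$ and $\operatorname{Fr}U=\es$.

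The only point needing care is that the omitted value lies strictly between the values taken on the two sets, which we ensured by choosing it in $(0,1)$. No real obstacle is expected; the whole argument rests on Urysohn-type functions combined with non-surjectivity.
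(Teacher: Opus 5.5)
Your proof is correct and is essentially the paper's argument: the paper cites this proposition without proof, but establishes the clopen-base part in Theorem~\ref{t:pr:1}(1) by exactly this trick of taking a value omitted by an Urysohn function $f$ and using $f^{-1}([0,a))$. The only cosmetic point is that in the first part you should set aside $F=\es$ (take $W=X$), as you do in the second part, since otherwise $1$ need not be a value of $f$.
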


\begin{theorem}\label{t:ncm:1}
Let $K$ be a non-scattered compact space and $X\subset K$ be an $nCM$-space. Then there is a crowded compact subset $M$ of $K\setminus X$.
\end{theorem}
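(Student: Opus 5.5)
Since $K$ is a non-scattered compact space, there is a continuous surjection $q\colon K\to[0,1]$. We would first reduce to the metrizable case, and then build the crowded compact set by a Cantor-scheme argument that avoids the image of $X$.

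\emph{Step 1: reduction.} By Proposition \ref{p:ncm:1}, $Y=q(X)\subset[0,1]$ is an $nCM$-space. By Remark \ref{r:cantor}, $Y$ therefore contains no uncountable compact set (apply it to the identity map of $Y$). So $Y$ is a subset of $[0,1]$ containing no Cantor set. Suppose we find a crowded compact set $C\subset[0,1]\setminus Y$, say a Cantor set. Then $q^{-1}(C)$ is a compact subset of $K$ disjoint from $X$, since $q(X)=Y$. It need not be crowded, because fibres may carry isolated points of $q^{-1}(C)$. To fix this, apply Zorn's lemma to get a closed $M\subset q^{-1}(C)$ that is minimal with $q(M)=C$. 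Such an $M$ has no isolated points. Indeed, if $m\in M$ were isolated, then $M\setminus\{m\}$ is closed and maps onto $C\setminus\{q(m)\}$. That set is dense in $C$ because $C$ is crowded, and the image is compact, so it equals $C$. This contradicts minimality. Hence $M$ is a crowded compact subset of $K\setminus X$.

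\emph{Step 2: the main obstacle.} We must show that $[0,1]\setminus Y$ contains a Cantor set when $Y$ contains none. This is false in general: a Bernstein set $Y$ contains no Cantor set, and neither does its complement. So pulling back through $[0,1]$ alone loses too much information. Instead we will use the whole family of maps. The plan is to work with $\mathcal{C}=C(K,[0,1])$ and the maps $f\restriction X$, each with $f(X)$ containing no Cantor set, and to construct a Cantor scheme directly in $K$.

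\emph{Step 3: the Cantor scheme.} Start from a closed crowded (perfect) set $P\subset K$; for instance, take a minimal closed subset mapping onto $[0,1]$, which is crowded by the argument of Step 1. Build closed sets $P_s$, $s\in\ot^{<\om}$, each with nonempty interior in $P$. The children $P_{s0},P_{s1}$ should be disjoint subsets of $P_s$, and each step should be arranged to remove points of $X$. Concretely, at stage $n$ choose $f_n\in C(K)$ separating the pieces. Use the fact that $f_n(X)$ has empty interior in $\R$: it is zero-dimensional and contains no interval. This lets us pick cut levels $a\notin f_n(X)$, so that $f_n^{-1}(a)\cap X=\es$, and set $M=\bigcap_n\bigcup_{|s|=n}P_s$. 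Keeping the whole intersection disjoint from $X$ is the delicate part. My first attempt would be to make each level $f_n^{-1}(a_n)$ a full piece: take $P_{s i}\subset f_n^{-1}(a_n)$, using levels of several functions to obtain perfect subsets. If that fails, I would use that $X\cap P$ is an $nCM$-subspace and argue by contradiction. Assume every crowded compact subset of $K$ meets $X$. Then, by a Cantor–Bendixson/Bernstein-type diagonalization against the at most $|C(K)|$ many compact sets, produce some $f$ with $f(X)\supset$ a Cantor set, contradicting $nCM$.
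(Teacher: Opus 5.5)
Your Step 1 is sound. The minimal-closed-set argument does turn a compact set disjoint from $X$ that maps onto a crowded compact set into a crowded compact subset of $K\setminus X$. So the reduction to finding a Cantor set in $[0,1]\setminus q(X)$ is legitimate.

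The genuine gap is everything after that. In Step 2 you replace the hypothesis on $Y=q(X)$ by the weaker statement that $Y$ contains no Cantor set. You correctly note that this weaker statement does not force $[0,1]\setminus Y$ to contain a Cantor set, and then you abandon the reduction. But $Y$ is an $nCM$-space, which is strictly stronger: a Bernstein set is not $nCM$ (Corollary \ref{c:ncm:1}). You never use the $nCM$ property for any map other than the identity.

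Step 3 does not fill this hole; it is a list of tentative strategies ("my first attempt \dots\ if that fails \dots"), not an argument. A level set $f_n^{-1}(a_n)$ with $a_n\notin f_n(X)$ does miss $X$. However, nothing in your construction guarantees it is more than a single point, or even nonempty. You give no way to choose $f_n$ so that such a level set is non-scattered. Once one such level set is non-scattered, the Cantor scheme is superfluous, so this is the whole difficulty. The fallback "Bernstein-type diagonalization" is unspecified and points the wrong way. Diagonalizing against the compact subsets of $K$ is how one builds sets meeting every perfect set. It gives no mechanism for producing a continuous $f$ with $f(X)$ containing a Cantor set.

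The missing idea is to pick one function all of whose fibres are large, so that a single application of the $nCM$ property yields a large fibre avoiding $X$. The paper does this with a Peano surjection $p\colon[0,1]\to[0,1]^2$. Take $g\colon K\to[0,1]$ onto and let $\pi$ be the first projection. Since $X$ is $nCM$, there is $t\in[0,1]\setminus\pi(p(g(X)))$. Then $(p\circ g)^{-1}(\{t\}\times[0,1])$ is a compact subset of $K\setminus X$ that the second coordinate of $p\circ g$ maps onto $[0,1]$. Hence it is non-scattered and contains a crowded compact set; your Step 1 minimality argument produces one directly.

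The same trick inside $[0,1]$ would also have rescued your own reduction. Since $Y$ is $nCM$, some $t\notin\pi(p(Y))$ exists. Then $p^{-1}(\{t\}\times[0,1])$ is an uncountable compact subset of $[0,1]\setminus Y$, so it contains a Cantor set $C$, to which your Step 1 applies.
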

\begin{proof}
Since $K$ is non-scattered, there is a continuous map $g: K\rightarrow [0,1]$ from $K$ onto $[0,1]$.
Let $p\colon[0,1]\to[0,1]^2$ be a Peano map (a continuous surjection). Then $f=p\circ g\colon K\to[0,1]^2$ is a continuous surjection. Let $\pi$ be the projection of $[0,1]^2$ onto the first factor. Since $X$ is an $nCM$-space, we have $\pi(f(X))\neq [0,1]$. Let $t\in [0,1]\setminus \pi(f(X))$. Then $\{t\}\times [0,1]\subset [0,1]^2\setminus f(X)$. The compact set $f^{-1}(\{t\}\times [0,1])\subset K\setminus X$ maps onto $[0,1]$, so it is not scattered and therefore contains a nonempty compact set without isolated points.
\end{proof}

\begin{corollary}\label{c:ncm:1}
If $X\subset\R$ is a Bernstein set then it is not an $nCM$-space. Hence,  it can be continuously mapped onto $[0,1]$.
\end{corollary}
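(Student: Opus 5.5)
We argue by contradiction, using Theorem \ref{t:ncm:1} with $K=[0,1]$ (or a compact interval $[a,b]\subset\R$). Suppose a Bernstein set $X\subset\R$ were an $nCM$-space. Recall that $X$ is Bernstein if both $X$ and $\R\setminus X$ meet every uncountable closed subset of $\R$. Fix a compact interval $K=[a,b]$ with $a<b$, and put $X'=X\cap K$. The set $X'$ is a subspace of $X$. We need it to be $nCM$, and for this we check that the $nCM$ property passes to the relevant subspace. Since $X$ is zero-dimensional and $X'$ is closed in $X$, a continuous surjection of $X'$ onto $[0,1]$ would extend to all of $X$. To arrange this cleanly, we instead choose $a,b\notin X$, which is possible since $\R\setminus X$ is dense, being uncountable in every interval. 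Then $X'=X\cap(a,b)$ is clopen in $X$. A continuous map $X'\to[0,1]$ onto $[0,1]$ can be extended by the constant $0$ on $X\setminus X'$, and the result is still continuous and onto. Hence $X'$ is an $nCM$-space whenever $X$ is.

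Next we apply Theorem \ref{t:ncm:1} to the non-scattered compact space $K=[a,b]$ and the $nCM$-subspace $X'\subset K$. It yields a nonempty crowded compact set $M\subset K\setminus X'$. Since $M$ is a nonempty compact subset of $\R$ without isolated points, it is uncountable (it is perfect, so it contains a Cantor set). Moreover $M\cap X\subset M\cap X'\cup\{a,b\}\cap X=\es$, because $M\subset K$ and $a,b\notin X$. Thus $M$ is an uncountable closed subset of $\R$ disjoint from $X$, which contradicts the Bernstein property. Hence $X$ is not an $nCM$-space.

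The second sentence of the statement is just the definition of an $nCM$-space negated, so $X$ can be continuously mapped onto $[0,1]$. The only delicate step is ensuring that the $nCM$ property passes to the subspace $X\cap K$. We handle it by choosing the endpoints outside $X$, so that $X\cap K$ is clopen in $X$. Alternatively, one could embed $X$ in the compact space $[-\infty,+\infty]\cong[0,1]$ directly and avoid this step altogether, since the extended real line minus $X$ still cannot contain an uncountable closed set beyond the two added points.
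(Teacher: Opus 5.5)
Your proof is correct and is essentially the paper's argument: apply Theorem \ref{t:ncm:1} to a compact space containing (part of) $X$, and note that the resulting nonempty crowded compact set yields an uncountable closed subset of $\R$ disjoint from $X$. The paper takes $K=[-\infty,+\infty]$ directly and discards the two ideal points from $M$ (your final alternative), which avoids your clopen-restriction step; incidentally, your abandoned remark that maps extend from closed subsets ``since $X$ is zero-dimensional'' gives the wrong reason (the correct one is Tietze's theorem, as $X$ is metrizable), but nothing in your argument depends on it.
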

\begin{proof}
Apply Theorem \ref{t:ncm:1} to $X\subset K=[-\infty,+\infty]$: a crowded compact $M\subset K\setminus X$ is uncountable, so $M\cap\R$ contains an uncountable compact subset of $\R$ disjoint from $X$, which is impossible for a Bernstein set.
\end{proof}

\begin{theorem}\label{t:ncm:2} A countable union of $nCM$-spaces is an $nCM$-space.
\end{theorem}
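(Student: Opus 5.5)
Argue by contradiction, reducing to a diagonal argument inside a Cantor set, in the spirit of the Peano-map trick in the proof of Theorem \ref{t:ncm:1}. Let $X=\bigcup_{n\in\om}X_n$, where each subspace $X_n$ is an $nCM$-space. Suppose $f\colon X\to[0,1]$ is a continuous surjection. Put $Y_n=f(X_n)\subset[0,1]$. By Proposition \ref{p:ncm:1}, each $Y_n$ (with the topology inherited from $[0,1]$) is an $nCM$-space, and $\bigcup_n Y_n=[0,1]$. So it suffices to show that $[0,1]$ is not a countable union of subspaces which are $nCM$-spaces. From here on $X$ and $f$ play no further role.

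Fix a Cantor set $C\subset[0,1]$ and a homeomorphism $h\colon C\to(\ot^\om)^\om$. Let $\pi_n\colon(\ot^\om)^\om\to\ot^\om$ be the $n$-th coordinate projection, and put $A_n=Y_n\cap C$, a closed subset of $Y_n$. The key claim is that $\pi_n(h(A_n))\neq\ot^\om$ for every $n$.

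To prove the claim, suppose $\pi_n\circ h$ maps $A_n$ onto $\ot^\om$. Compose with the Cantor function $c\colon\ot^\om\to[0,1]$, viewing $\ot^\om$ as the standard Cantor set, as in Remark \ref{r:cantor}. This gives a continuous surjection $c\circ\pi_n\circ h\restriction A_n\colon A_n\to[0,1]$. Since $Y_n$ is metrizable, hence normal, and $A_n$ is closed in $Y_n$, the Tietze extension theorem yields a continuous map $Y_n\to[0,1]$ extending it. This extension is still onto $[0,1]$, contradicting that $Y_n$ is an $nCM$-space.

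Now choose $t_n\in\ot^\om\setminus\pi_n(h(A_n))$, and let $t=(t_n)_n\in(\ot^\om)^\om$ and $x=h^{-1}(t)\in C$. For each $n$ we have $\pi_n(h(x))=t_n\notin\pi_n(h(A_n))$, so $x\notin A_n$, i.e. $x\notin Y_n$. Hence $x\notin\bigcup_nY_n=[0,1]$, which is absurd.

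The only step needing care is the claim, specifically passing from a surjection defined on the closed set $A_n$ to one defined on all of $Y_n$; metrizability of $Y_n\subset[0,1]$ and the Tietze extension theorem handle it. The rest is the diagonal choice made possible by $C\cong(\ot^\om)^\om$.
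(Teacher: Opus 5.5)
Your proof is correct, but it takes a different route from the paper. Where the paper builds a nested sequence of crowded compacta by induction using Theorem \ref{t:ncm:1}, you make a single diagonal choice in $C\cong(\ot^\om)^\om$.

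The paper works with $M_i=f(X_i)$ and constructs a decreasing sequence of nonempty crowded compacta $K_n\subset[0,1]$ with $K_n\cap M_n=\es$. At stage $n$ it notes that $K_{n-1}\cap M_n$ is closed in $M_n$, hence $nCM$ by Tietze. It then applies Theorem \ref{t:ncm:1} (Peano map onto $[0,1]^2$ followed by a projection) inside the non-scattered compact set $K_{n-1}$. A point of $\bigcap_n K_n$, nonempty by compactness, lies outside $f(X)$.

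You share the two essential ingredients: reduction to subspaces of $[0,1]$, and the Tietze argument that closed subspaces of metrizable $nCM$-spaces are $nCM$. You replace both the Peano-curve lemma and the induction by the decomposition $C\cong(\ot^\om)^\om$. Each $nCM$ piece $A_n$ misses a whole fibre $\pi_n^{-1}(t_n)$. Because the coordinates are independent, all choices can be made at once, and the fibres meet in the explicit point $h^{-1}((t_n)_n)$. So no nested sequence and no compactness argument are needed.

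What each buys: your argument is shorter and more self-contained. It uses neither the fact that non-scattered compacta map onto $[0,1]$ nor Peano maps. The paper's version routes through Theorem \ref{t:ncm:1}, a reusable statement about arbitrary non-scattered compacta, which it also applies to Bernstein sets in Corollary \ref{c:ncm:1}.
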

\begin{proof}
Let $X=\bigcup_{i<\om}X_i$ where $X_i$ is an  $nCM$-space for each $i<\om$.
Let $f\: X\to [0,1]$ be a continuous mapping from $X$ to $[0,1]$. By Proposition \ref{p:ncm:1}, $M_i=f(X_i)$ is an $nCM$-space for every $i<\om$.

By induction we construct a decreasing sequence $(K_n)_{n\ge -1}$ of nonempty compact subsets of $[0,1]$ without isolated points such that $K_n\cap M_n=\es$ for $n\ge 0$. Let $K_{-1}=[0,1]$. Suppose $K_{n-1}$ has been constructed. The set $S=K_{n-1}\cap M_n$ is closed in $M_n$. A closed subspace of a metrizable $nCM$-space is an $nCM$-space (by the Tietze--Urysohn theorem, as in Proposition \ref{p:ncm:1+1}), so $S$ is an $nCM$-space. Since $K_{n-1}$ is a nonempty crowded compact set, it is not scattered, and by Theorem \ref{t:ncm:1} there is a nonempty crowded compact $K_{n}\subset K_{n-1}\setminus S=K_{n-1}\setminus M_{n}$.

The set $K=\bigcap_{n<\om}K_n$ is not empty and $K\subset [0,1]\setminus f(X)$. Consequently, $f(X)\neq[0,1]$.
\end{proof}

\begin{proposition}\label{p:ncm:1+1}
Let $X$ be a normal $nCM$-space and $Y\subset X$ be an $F_\si$-set.
Then $Y$ is an $nCM$-space.
\end{proposition}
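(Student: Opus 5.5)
First I treat the case where $Y$ is closed in $X$, and then pass to $F_\si$-sets by Theorem \ref{t:ncm:2}. Write $Y=\bigcup_{n<\om}Y_n$ with each $Y_n$ closed in $X$. If every closed subspace of a normal $nCM$-space is an $nCM$-space, then each $Y_n$ is $nCM$, and Theorem \ref{t:ncm:2} shows that $Y$ is $nCM$. So it suffices to prove that a closed subset $F$ of a normal $nCM$-space $X$ is an $nCM$-space.

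Suppose, towards a contradiction, that $g\colon F\to[0,1]$ is a continuous surjection. Since $X$ is normal and $F$ is closed, the Tietze--Urysohn theorem gives a continuous extension $G\colon X\to[0,1]$ of $g$. Then $G(X)\supset g(F)=[0,1]$, so $G$ maps $X$ onto $[0,1]$. This contradicts the assumption that $X$ is $nCM$. Hence $F$ is $nCM$.

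The same extension argument also justifies the parenthetical claim used in the proof of Theorem \ref{t:ncm:2}: metrizable spaces are normal, so it applies there too. The only point where the hypotheses are really used is the extension step, since normality is exactly what allows continuous maps on closed sets to be extended. I see no serious obstacle; the rest is bookkeeping with the countable union.
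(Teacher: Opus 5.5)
Your proposal is correct and follows essentially the same route as the paper: show that each closed piece of the $F_\si$-decomposition is $nCM$ by extending a surjection onto $[0,1]$ via Tietze--Urysohn, then conclude with Theorem \ref{t:ncm:2}. You are also right that the closed-subspace step is independent of Theorem \ref{t:ncm:2}, so the cross-reference between the two results involves no real circularity.
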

\begin{proof}
Let $Y$$=\bigcup_n F_n$, where $F_n$ is closed in $X$. We show that $F_n$ is an $nCM$-space. Assume the contrary.
Then there exists a continuous surjective map $f\: F_n\to [0,1]$. It follows from the Tietze-Urysohn theorem that there exists a continuous extension $\hat f\: X\to [0,1]$ of $f$. This contradicts the fact that $X$ is an $nCM$-space. Therefore, every $F_n$ is an $nCM$-space. It follows from Theorem \ref{t:ncm:2} that $Y$ is an $nCM$-space.
\end{proof}

Let us give examples of $nCM$-spaces. The definition of the classes of spaces mentioned in the next sentence can be found in \cite{Bukovsky2011}.

\begin{proposition}[{\cite[Corollary 8.59]{Bukovsky2011}}]\label{p:ncm:3}
Every  space $X$ with any of the properties: $X$ is a $\ga$-space, $X$ is a $QN$-space, $X$ is a $wQN$-space, $X$ is a $\Si$-space, is an $nCM$-space.
\end{proposition}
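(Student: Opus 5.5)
Proposition \ref{p:ncm:3} is cited from \cite[Corollary 8.59]{Bukovsky2011}. I would nevertheless organize a proof around one reduction: each of the four classes is preserved under continuous images into $[0,1]$, while the whole segment $[0,1]$ belongs to none of them. Concretely, suppose $X$ has one of the listed properties and $f\colon X\to[0,1]$ is a continuous surjection. I would show that $f(X)=[0,1]$ inherits the property and then derive a contradiction. So the plan has two layers: a preservation lemma for each class, and a non-membership lemma for $[0,1]$.

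For preservation, each class is defined by a combinatorial selection or convergence condition on covers or on sequences of functions, and such conditions pull back along continuous maps.
\begin{enumerate}
\item For $\ga$-spaces, an $\om$-cover $\{U_n\}$ of $f(X)$ pulls back to an $\om$-cover $\{f^{-1}(U_n)\}$ of $X$. A $\ga$-subcover of the preimages maps back to a $\ga$-subcover of $f(X)$, using surjectivity of $f$.
\item For $QN$- and $wQN$-spaces, a sequence $g_n\in C(f(X))$ converging pointwise to $0$ gives $g_n\circ f\to 0$ in $C(X)$. Quasi-normal convergence of $g_n\circ f$, or of a subsequence in the $wQN$ case, transfers back to $(g_n)$ with the same control sequence $\varepsilon_n\to0$. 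Here surjectivity guarantees that every point of $f(X)$ is covered.
\item For $\Si$-spaces, defined via $\Si$-convergence of sequences of continuous functions, the same composition argument applies.
\end{enumerate}

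For non-membership, I would show that $[0,1]$ fails each property. It suffices to handle the weakest classes, using the known implications $\ga\Rightarrow QN\Rightarrow wQN$ and $QN\Rightarrow\Si\Rightarrow wQN$ from \cite{Bukovsky2011}. Then it is enough to prove that $[0,1]$ is not $wQN$ and, if $\Si$ is not below $wQN$ in the version being used, also that $[0,1]$ is not $\Si$. To show $[0,1]$ is not $wQN$, I would take a standard sequence of continuous functions converging pointwise to $0$ with no quasinormally convergent subsequence. A natural candidate is built from tent functions on finer and finer dyadic partitions.

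The argument for that candidate goes as follows. Suppose some subsequence converged quasinormally with control $\varepsilon_k\to0$. The sets $\{x:|g_{n_k}(x)|\ge\varepsilon_k\}$ would then form a sequence of closed sets such that each point lies in only finitely many of them. By the Baire category theorem, some interval would have to avoid all but finitely many of these sets. This is impossible if the tents are arranged to revisit every interval infinitely often, with heights not tending to $0$ uniformly on any open set. Equivalently, $[0,1]$ is not a $wQN$-space because it is an uncountable Polish space, and $wQN$-spaces contain no perfect sets.

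I expect the main obstacle to be the non-membership step, especially for $\Si$, where the definition is less transparent. My first attempt would be to reduce everything to the implication chain above, so that only $[0,1]\notin wQN$ needs a direct Baire-category proof.
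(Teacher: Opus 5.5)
The paper does not prove this statement; it only quotes \cite[Corollary 8.59]{Bukovsky2011}. Your reduction is the right one, and the preservation half is fine: pulling back $\omega$-covers, and composing sequences $g_n\to0$ with the surjection $f$, does transfer the $\gamma$, $QN$ and $wQN$ properties to $f(X)=[0,1]$. The gap is in the half that carries all the content, namely that $[0,1]$ is not $wQN$. As sketched, that argument does not go through.

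\textbf{The Baire step is misstated.} Closed sets $E_k$ with every point in only finitely many of them need not leave any interval almost untouched: take $E_k=\{q_k\}$ for an enumeration of $\mathbb{Q}\cap[0,1]$. The correct use of quasinormal convergence with control $(\varepsilon_k)$ is that $[0,1]=\bigcup_m\bigcap_{k\ge m}\{x:|g_{n_k}(x)|\le\varepsilon_k\}$ is a countable union of closed sets. Hence $g_{n_k}\to0$ uniformly on some interval $I$.

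\textbf{The required property of the sequence is wrong.} To refute this for \emph{every} subsequence, you need that no subsequence converges uniformly on any interval, i.e. $\liminf_n\sup_I|g_n|>0$ for every interval $I$. ``Revisiting every interval infinitely often'' is not enough, because a subsequence can skip the revisits.

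\textbf{The dyadic-tent candidate cannot work.} Suppose the tent heights are bounded below by a fixed $c>0$ and $\sup_I|g_n|\ge c$ for infinitely many $n$ on every interval $I$. Then each $\bigcup_{n\ge m}\{|g_n|>c/2\}$ is open and dense. So $|g_n(x)|>c/2$ infinitely often on a comeager set, and $g_n\not\to0$. The lower bound has to shrink with the interval; this is the idea missing from your sketch.

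\textbf{A sequence that works.} Fix a dense set $\{d_j:j\ge1\}\subset[0,1]$ and put $r_n=\frac{1}{4n(n+1)}$. Let $\tau_{n,j}$ be the tent of height $1$ and radius $r_n$ centred at $d_j+\frac1n$, and set $g_n=\max_{j\le n}2^{-j}\tau_{n,j}$.
\begin{itemize}
\item For fixed $j$ the supports of the $\tau_{n,j}$ are pairwise disjoint in $n$. So each point meets a tent of height $\ge\varepsilon$ only finitely often, and $g_n(x)\to0$.
\item If $d_j$ is interior to $I$, then $\sup_I g_n\ge2^{-j}$ for all large $n$.
\end{itemize}
With the corrected Baire step, this shows $[0,1]$ is not $wQN$. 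Your alternative remark that ``$wQN$-spaces contain no perfect sets'' is a citation of the same fact, not a proof of it.

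With this repair you get $[0,1]\notin wQN$, hence $[0,1]\notin QN$. You also get that $[0,1]$ is not a $\gamma$-space, either via $\gamma\Rightarrow QN$ or directly: an open $\omega$-cover by sets of Lebesgue measure $<\frac12$ has no $\gamma$-subcover, since by Fatou's lemma $\lambda(\liminf_n U_n)\le\frac12$.

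\textbf{The $\Sigma$ case remains open.} You neither fix the definition of a $\Sigma$-space nor prove the implication $\Sigma\Rightarrow wQN$ that your reduction relies on. Note also that the sequence above satisfies $\sum_n g_n(x)\le1$ for every $x$. So if $\Sigma$ is a summability-type condition, this example does not refute it, and a separate argument would be needed. For that class your proposal still rests on the citation, just as the paper does.
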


A space $X$ is called a {\em Rothberger} space if for any sequence $(\ga_n)_n$ of open covers of $X$ there exist $U_n\in\ga_n$ such that $X=\bigcup_n U_n$.

\begin{proposition}\label{p:ncm:4}
If $X$ is a Rothberger space then $X$ is an $nCM$-space.
\end{proposition}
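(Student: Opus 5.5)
We argue by contradiction: suppose $X$ is Rothberger and $f\colon X\to[0,1]$ is a continuous surjection. The Rothberger property passes to continuous images. Indeed, given open covers $(\ga_n)_n$ of $f(X)$, their preimages $f^{-1}(\ga_n)$ are open covers of $X$. Choosing $U_n\in\ga_n$ with $X=\bigcup_n f^{-1}(U_n)$ then gives $f(X)=\bigcup_n U_n$. Hence $[0,1]$ would be a Rothberger space. So the statement reduces to showing that $[0,1]$ is not Rothberger.

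To see that $[0,1]$ fails the Rothberger property, we use a measure argument. For each $n$ let $\ga_n$ be the open cover of $[0,1]$ by all relatively open intervals of length less than $2^{-n-2}$. Any selection $U_n\in\ga_n$ has total Lebesgue measure at most $\sum_n 2^{-n-2}=\tfrac12<1$. Therefore $\bigcup_n U_n\ne[0,1]$, contradicting the Rothberger property. This completes the proof.

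Alternatively, one could argue that a Rothberger space is a continuous preimage only of Rothberger metrizable spaces, and that Rothberger separable metrizable spaces have strong measure zero. An uncountable compact set does not have strong measure zero, so $f(X)$ contains no uncountable compact set, and Remark \ref{r:cantor} applies. The direct route above is simpler, though.

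The only point needing care is that preimages of open covers are open covers, which holds by continuity and surjectivity. There is no genuine obstacle.
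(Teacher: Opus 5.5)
Your proof is correct and follows the same route as the paper: the Rothberger property is preserved by continuous images, and $[0,1]$ is not Rothberger. The only difference is that you verify the second fact directly with the Lebesgue measure argument (the selected intervals have total length at most $\tfrac12$), whereas the paper cites it from Bukovsk\'y's book (Corollary 8.103).
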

\begin{proof}
The Rothberger property is preserved by continuous mappings and $[0,1]$ is not a Rothberger space \cite[Corollary 8.103]{Bukovsky2011}; see also \cite[Corollary 8.104]{Bukovsky2011}.
\end{proof}

\begin{proposition}\label{p:ncm:5}
{\rm $\mathrm{(CH)}$} There is a non-$nCM$-space $X\subset \R\setminus \Q$ that is concentrated on the set $\Q$. The space $Y=X\cup \Q$ is  Rothberger and hence an $nCM$-space.
\end{proposition}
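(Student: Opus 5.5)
The plan is to construct $X$ under CH by a transfinite recursion of length $\om_1$. The set $X\subset\R\setminus\Q$ will be concentrated on $\Q$ and will still admit a continuous surjection onto $[0,1]$. Recall that $X$ is concentrated on $\Q$ if every open $U\supset\Q$ satisfies $|X\setminus U|\le\aleph_0$.

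The key observation is that the identity map is useless here, since $X\subset\R$ has empty interior complement issues. Instead I use a continuous map defined on $\R\setminus\Q$ that is not continuous on $\R$. Identify $\R\setminus\Q$ with the Baire space $\om^\om$. Fix a continuous surjection $\phi\colon\om^\om\to[0,1]$ with the following property: for each $t\in[0,1]$, the fibre $\phi^{-1}(t)$ is not contained in any $\si$-compact subset of $\om^\om$. A concrete choice is $\phi=\psi\circ\pi_1$, where $\om^\om\cong\ot^\om\times\om^\om$, $\pi_1$ is the first projection, and $\psi$ is the Cantor-function surjection of $\ot^\om$ onto $[0,1]$. Then each fibre contains a copy of $\{c\}\times\om^\om$, which is closed and not $\si$-compact.

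Next I use the fact that the open sets $U\supset\Q$ correspond, via complements in $\R\setminus\Q$, to the closed subsets of $\R$ inside $\R\setminus\Q$. These are exactly the compact subsets of $\om^\om$, up to the boundedness caveat, which is handled by working in $[0,1]$ or by $\si$-compactness. So $X$ is concentrated on $\Q$ iff $X\cap C$ is countable for every compact $C\subset\R\setminus\Q$, that is, iff $X\cap C$ is countable for every $\si$-compact $C\subset\om^\om$. Under CH, enumerate all $\si$-compact subsets of $\om^\om$ as $\{C_\al:\al<\om_1\}$ and $[0,1]$ as $\{t_\al:\al<\om_1\}$. At stage $\al$, choose
\[
x_\al\in\phi^{-1}(t_\al)\setminus\bigcup_{\be<\al}C_\be .
\]
Such a point exists because $\bigcup_{\be<\al}C_\be$ is a countable union of $\si$-compact sets, hence $\si$-compact, and it cannot cover the fibre $\phi^{-1}(t_\al)$. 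Put $X=\{x_\al:\al<\om_1\}$. Then $X\cap C_\be\subset\{x_\al:\al\le\be\}$ is countable, so $X$ is concentrated on $\Q$. Also $\phi\restriction X$ maps $X$ continuously onto $[0,1]$, so $X$ is not an $nCM$-space.

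For the second assertion I would argue as follows. The set $Y=X\cup\Q$ is concentrated on its countable subset $\Q$, and a space concentrated on a countable set is Rothberger. To see this, given covers $\ga_n$, use the even indices to cover the points $q_k$ of $\Q$ one at a time. The union $U$ of these chosen sets is open and contains $\Q$, so $Y\setminus U$ is countable. The odd indices then cover the remaining points one at a time. Proposition \ref{p:ncm:4} then gives that $Y$ is $nCM$.

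The main obstacle is the claim that each fibre of $\phi$ is not covered by countably many compact sets, together with the correct translation of "concentrated" into avoidance of $\si$-compact sets. Both rest on the standard facts that $\om^\om$ is not $\si$-compact and that closed subsets of $\R$ disjoint from $\Q$ are $\si$-compact in $\R\setminus\Q$.
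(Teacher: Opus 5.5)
Your proof is correct. The paper proves nothing here and relies entirely on citations. It takes from Miller's Theorem 9.4 a CH set $X\subset\R\setminus\Q$ concentrated on $\Q$ that maps continuously onto $\ot^\omega$, hence onto $[0,1]$. It takes from Bukovsk\'y's Theorem 8.105 the fact that a space concentrated on a countable subset is Rothberger.

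You prove both ingredients yourself:
\begin{itemize}
\item Your reduction of ``concentrated on $\Q$'' to ``$X\cap C$ is countable for every compact $C\subset\R\setminus\Q$'' is right. For open $U\supseteq\Q$, the set $\R\setminus U$ is closed in $\R$ and misses $\Q$, so it is the union of the compact sets $(\R\setminus U)\cap[-n,n]\subset\R\setminus\Q$. Compactness is transported by the homeomorphism $\R\setminus\Q\cong\omega^\omega$.
\item The diagonalization goes through as written. Each fibre of $\phi$ contains a closed copy of $\omega^\omega$, so it is not covered by the $\sigma$-compact set $\bigcup_{\beta<\alpha}C_\beta$. Hence $X\cap C_\beta\subset\{x_\alpha:\alpha\le\beta\}$ is countable.
\item The even/odd bookkeeping for the Rothberger property is the standard argument and is fine. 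An open subset of $Y$ containing $\Q$ is the trace of an open subset of $\R$ containing $\Q$, so its complement in $Y$ is countable.
\end{itemize}

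What your version buys is transparency. It shows exactly where CH enters: both enumerations must have length $\omega_1$ so that the initial segments $\{x_\alpha:\alpha\le\beta\}$ are countable. It also maps $X$ onto $[0,1]$ directly rather than through $\ot^\omega$. The paper's version buys only brevity.

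One cosmetic point: your sentence about the identity map is garbled. What you presumably mean is that $\phi\restriction X$ cannot extend continuously to $Y=X\cup\Q$, since otherwise $Y$ would map onto $[0,1]$ and fail to be $nCM$. That is why $\phi$ has to be continuous on $\R\setminus\Q$ but not on $\R$, and that is exactly what the identification with $\omega^\omega$ provides.
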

\begin{proof}
Assuming the continuum hypothesis there exists a set $X\subset \R\setminus \Q$ concentrated on the rationals which can be mapped continuously onto $\ot^\om$ (\cite{R} or \cite[Theorem 9.4, p. 225]{Miller1984}), and hence onto $[0,1]$.
The space $Y=X\cup\Q$ is concentrated on $\Q=\{q_n:n\in\om\}$, so it is Rothberger  \cite[Theorem 8.105]{Bukovsky2011}.
\end{proof}

\begin{corollary}\label{c:ncm:2}
{\rm $\mathrm{(CH)}$} In the class of separable metrizable spaces, the property of being an $nCM$-space is not inherited by subspaces, even by $G_\de$-subspaces (in Proposition \ref{p:ncm:5}, $X=Y\setminus\Q$ is a $G_\de$-set in $Y$).
\end{corollary}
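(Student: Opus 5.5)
The corollary needs two facts about the spaces of Proposition \ref{p:ncm:5}, so I would take both from that proposition and its proof. Work under CH and let $X\subset\R\setminus\Q$ and $Y=X\cup\Q$ be as there. Then $Y$ is a separable metrizable $nCM$-space, while its subspace $X$ is not an $nCM$-space, since it maps continuously onto $\ot^\om$ and hence onto $[0,1]$. So being an $nCM$-space is not inherited by subspaces of separable metrizable spaces. What remains is to show that this particular subspace is a $G_\de$-subset of $Y$.

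For the $G_\de$ part, write $\Q=\{q_n:n\in\om\}$. Then
\[
X=Y\setminus\Q=\bigcap_{n\in\om}\big(Y\setminus\{q_n\}\big),
\]
and each $Y\setminus\{q_n\}$ is open in $Y$ because points are closed in the metrizable space $Y$. Hence $X$ is a countable intersection of open subsets of $Y$, i.e. a $G_\de$-set in $Y$. The equality $X=Y\setminus\Q$ holds because $X\cap\Q=\es$.

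It is also worth noting why the example must be of this kind. By Proposition \ref{p:ncm:1+1}, $F_\si$-subsets of a normal $nCM$-space, and in particular closed subsets, are again $nCM$-spaces. So $G_\de$ is the natural next Borel class in which heredity can fail, and the example shows that it does fail there.

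The only substantive input is the set-theoretic construction already cited in Proposition \ref{p:ncm:5}, namely a set concentrated on $\Q$ that maps onto $\ot^\om$, together with the fact that concentrated sets are Rothberger. Given that proposition, I expect no real obstacle; the corollary is a direct repackaging of it.
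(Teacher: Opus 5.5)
Your proposal is correct and follows the paper's argument: the corollary is read off from Proposition~\ref{p:ncm:5} ($Y$ is Rothberger, hence $nCM$, while $X$ maps onto $\ot^\om$ and hence onto $[0,1]$), and $X=Y\setminus\Q=\bigcap_n(Y\setminus\{q_n\})$ is $G_\de$ in $Y$ because $\Q$ is countable and points are closed. Your side remark also holds: by Proposition~\ref{p:ncm:1+1}, $F_\si$-subspaces of the normal $nCM$-space $Y$ remain $nCM$, which is why $G_\de$ is the natural class in which heredity fails.
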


\begin{question}
Is there in ZFC a separable metrizable $nCM$-space containing a subspace which is not an $nCM$-space?
\end{question}

\begin{question}
Let $X$ be a space which is not an $nCM$-space. Is $e(K_1(X,\ot))\ge\mathfrak{c}$?
\end{question}

\begin{remark}\label{r:luzin}
Let $X\subset\R$ be an uncountable Luzin set (such sets exist, e.g., under CH). Then $X$ satisfies all necessary conditions for the Lindel\"of property of $B_1(X)$ known to us:
\begin{enumerate}
\item $X$ is zero-dimensional (it contains no interval), perfect, every closed subset of $X$ is a $CZ$-set, and $s(X)=\aleph_0$, cf. Theorem \ref{t:pr:1};
\item $X$ is a Rothberger space \cite[Corollary 8.106]{Bukovsky2011} (it is concentrated on every countable dense subset of itself, see also \cite[Theorem 3.1(i) and \S 9]{Miller1984}), and hence an $nCM$-space (Proposition \ref{p:ncm:4}); in particular, every continuous image of $X$ is $k$-scattered and Lemma \ref{l:cantor:2} gives no uncountable closed discrete subsets of $B_1(X)$;
\item $X$ is not a $\si$-set \cite[Corollary 8.44]{Bukovsky2011} (in fact, $\mathrm{ord}(X)=3$ \cite[Theorem 15]{Miller1979}); this agrees with Remark \ref{r:gor};
\item $B_1(X)$ and $K_1(X,\ot)$ are realcompact (Corollary \ref{t:rc:1}).
\end{enumerate}
On the other hand, $X$ is not a $\la$-set (see the proof of \cite[Theorem 15]{Miller1979}), so $B_1(X)$ is not a Choquet space \cite[Theorem 1.1(iv)]{GOR26}.
\end{remark}

\begin{question}\label{q:luzinLind}
Can $B_1(X)$ be Lindel\"of (normal, or of countable extent) for an uncountable Luzin set $X\subset\R$?
\end{question}


\section{Extent and normality}\label{sec:en}

A topological space $X$ is called {\it weakly normal over a class $\mathcal{P}$ of spaces}
if for any pair of disjoint closed subsets $A,B\subseteq X$ there exist a space $P_{A,B}\in \mathcal{P}$ and a
continuous map $f_{A,B} : X \rightarrow P_{A,B}$ such that $f_{A,B}(X)=P_{A,B}$ and $f_{A,B}(A)\cap f_{A,B}(B)=\emptyset$. If $\mathcal{P}$
 is the class consisting of all subspaces of a space $Y$ then we say that $X$ is {\it weakly
normal over} the space $Y$. In the case $Y=\mathbb{R}^{\omega}$, the space $X$ is simply called {\it weakly
normal} (Def. 1.1 in \cite{Ya}; the notion is due to Arhangel'skii, see \cite{ArhCleav}). Every normal space is weakly normal: an Urysohn function $f\colon X\to[0,1]$ separating $A$ and $B$ will do.


\begin{remark}
If in this definition one requires $\overline{f(A)}\cap\overline{f(B)}=\es$, then the notion coincides with normality (compose $f$ with a distance function). So in the proof of Theorem \ref{t:en:1} one must obtain exactly $f(F)\cap f(G)\ne\es$.
\end{remark}

\begin{theorem}\label{t:en:1}
Let $X$ be a space and $Y$ be a separable metrizable space.
If a space $K_1(X,Y)$ is weakly normal, then $e(K_1(X,Y))\leq\om$.
\end{theorem}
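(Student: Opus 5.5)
The plan is to argue by contradiction: assume that $K=K_1(X,Y)$ is weakly normal and contains an uncountable closed discrete set. We then produce two disjoint closed sets $F,G\subset K$ such that every continuous $f\colon K\to\R^\om$ satisfies $f(F)\cap f(G)\ne\es$.

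\textbf{Reductions.} We may assume that $|Y|\ge2$ and that $X$ is uncountable. Otherwise either $K$ is a single point, or $K\subset Y^X$ is second countable and so has countable extent. Let $D=\{d_\al:\al<\om_1\}$ be an uncountable closed discrete subset of $K$, with the $d_\al$ distinct.

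\textbf{Factorization through countably many coordinates.} First, $K$ is dense in $Y^X$. Indeed, for a finite set $E\subset X$ and a prescribed $\xi\in Y^E$, each point of $E$ is separated from the others by zero/cozero sets, so the fibres $\xi^{-1}(y)$ can be enlarged to a finite partition of $X$ into $CZ$-sets. Gluing constants along this partition (Lemma \ref{l:glue}) gives an element of $K$ extending $\xi$. Next, by the factorization lemma for continuous maps on dense subspaces of products of separable metrizable spaces \cite[0.2.3]{arch}, used already in the proof of Proposition \ref{p:dense}, every continuous $f\colon K\to\R^\om$ has the form $f=g\circ p_S\restriction K$ for some countable $S\subset X$, where $p_S$ is the projection onto $Y^S$. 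Hence it suffices to build disjoint closed $F,G\subset K$ such that for every countable $S$ there are $\varphi\in F$ and $\psi\in G$ with $\varphi\restriction S=\psi\restriction S$.

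\textbf{Construction of $G$.} By Proposition \ref{p:en:1} there is a disjoint family $\{Z_\al:\al<\om_1\}$ of nonempty zero sets of $X$. For each $\al$, pick $z_\al\in Z_\al$ and $y_\al\in Y$ with $y_\al\ne d_\al(z_\al)$. Let $g_\al$ equal $y_\al$ on $Z_\al$ and $d_\al$ on $X\setminus Z_\al$. Since $Z_\al$ is a $CZ$-set, Lemma \ref{l:glue} gives $g_\al\in K$, and $g_\al\ne d_\al$.

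We claim that $D\cup\{g_\al:\al<\om_1\}$ is closed discrete in $K$. Let $(g_{\al_\la})$ be a net in which no index repeats cofinally. Each $x\in X$ lies in at most one $Z_\al$, so $g_{\al_\la}(x)=d_{\al_\la}(x)$ eventually. Consequently any pointwise limit of $(g_{\al_\la})$ in $K$ is also a limit of $(d_{\al_\la})$, which is impossible because $D$ is closed discrete. Nets mixing the two families are handled the same way.

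Now take a countable $S$. It meets only countably many $Z_\al$, so $g_\al\restriction S=d_\al\restriction S$ for uncountably many $\al$. This gives the required coincidence of $S$-projections.

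\textbf{The main obstacle: disjointness of $F$ and $G$.} The difficulty is that some $g_\al$ may coincide with some $d_\beta$. Since $D\cup\{g_\al\}$ is closed discrete, every one of its subsets is closed in $K$. So it is enough to find two disjoint subsets $F$ and $G$ of $D\cup\{g_\al\}$ such that, for every countable $S$, some point of $F$ and some point of $G$ have equal $S$-projections.

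For this, consider the graph on $D\cup\{g_\al\}$ whose edges join $d_\al$ and $g_\al$. A coincidence $g_\al=d_\beta$ merges two vertices, and since the $g_\al$ are built from distinct $d_\al$, each vertex has countable degree. I would try to $2$-colour this graph so that uncountably many edges $\{d_\al,g_\al\}$ join differently coloured vertices, for instance by choosing a suitable uncountable set of indices and colouring along the resulting paths. Then $F$ and $G$ are the two colour classes.

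A cleaner alternative is to choose each $y_\al$ so that $g_\al\notin D$. This needs $|Y|\ge 3$, or a choice of $z_\al$ ranging over an infinite part of $Z_\al$. If that can be arranged, one takes $F=D$ and $G=\{g_\al:\al<\om_1\}$ directly.

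Once $F$ and $G$ are in place, any continuous $f\colon K\to\R^\om$ factors as $g\circ p_S$, and the points of $F$ and $G$ with equal $S$-projections show $f(F)\cap f(G)\ne\es$. This contradicts weak normality.
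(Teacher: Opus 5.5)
Your reduction is correct, and it differs from the paper in one useful way. Instead of the paper's $\Delta$-system argument (Claim~1), you use the fact that continuous maps on the dense subspace $K\subset Y^X$ factor through a countable set $S$ of coordinates. This is the lemma the paper itself uses in Proposition~\ref{p:dense}, and it gives at once that every continuous $h\colon K\to\R^\om$ satisfies $h(d_\al)=h(g_\al)$ whenever $Z_\al\cap S=\es$. Your construction of the $g_\al$ and your pointwise-limit argument are the paper's Claim~2.

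The gap is the step you single out yourself. You never produce an uncountable $A\subset\om_1$ with $g_\al\neq d_\beta$ for all $\al,\beta\in A$; both routes you sketch are left as intentions.

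The colouring route rests on ``each vertex has countable degree''. The reason you give does not prove this: distinctness of the $d_\al$ gives no control over how many $g_\al$ coincide with a given point. The missing fact, which is the paper's Claim~3, is that $\{\al:g_\al=v\}$ is finite for every $v\in K$. Otherwise take a sequence of distinct such indices $\al_n$. Then $g_{\al_n}=v$ is a constant net with no index repeated, and your own argument gives $d_{\al_n}\to v$ with pairwise distinct $d_{\al_n}$, so $v$ accumulates $D$.

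With this fact the paper closes the gap as follows. Put $\psi(\al)=\beta$ if $g_\al=d_\beta$, and $\psi(\al)=\om_1$ otherwise. Then $\psi(\al)\neq\al$, and $\psi^{-1}(\beta)$ is finite for $\beta<\om_1$. Choose recursively $\al_\xi\notin\{\al_\eta,\psi(\al_\eta):\eta<\xi\}\cup\psi^{-1}(\{\al_\eta:\eta<\xi\})$; this gives the required $A$, which is an uncountable matching in your graph. Then $F=\{d_\al:\al\in A\}$ and $G=\{g_\al:\al\in A\}$ are disjoint closed sets. Your colouring can also be completed with no degree bound: $2$-colour a spanning forest of the graph, which has uncountably many edges. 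But some such argument has to be given.

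Your ``cleaner alternative'' does not work as stated. Your $g_\al$ equals $y_\al$ on all of $Z_\al$, so varying $z_\al$ produces no new candidates. For finite $Y$, nothing in your argument prevents every admissible modification of $d_\al$ from lying in $D$. In particular it says nothing for $Y=\ot$, the case behind Corollary~\ref{c:enba:1}.

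It could be repaired by splitting the disjoint zero sets into countable blocks $Z_{\al,n}$, $n\in\om$, and modifying $d_\al$ on $Z_{\al,n}$. These modifications converge pointwise to $d_\al$ and differ from it, so all but finitely many avoid $D$. This repair is not in your proposal.
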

\begin{proof}
 If $X$ is countable, then $K_1(X,Y)\subset Y^X$ is second countable and $e(K_1(X,Y))\le\om$; if $|Y|\le1$ there is nothing to prove. So let $X$ be uncountable and $|Y|\ge2$.
Assume the contrary. Let $\{f_\al:\al<\om_1\}$ be a discrete closed subset of $K_1(X,Y)$, $f_\al\ne f_\beta$ for $\al\ne\beta$.
By Proposition  \ref{p:en:1}, there is a disjoint uncountable family of zero-subsets $\{ F_\al:\al<\om_1\}$ of $X$. By Lemma \ref{l:fact}, there is a nonempty zero-set $G_\al\subset F_\al$ such that $\restr{f_\al}{G_\al}$ is constant. Let $g_\al$ be a function on $X$ such that $\restr {f_\al}{(X\setminus G_\al)}=\restr {g_\al}{(X\setminus G_\al)}$,
$\restr{g_\al}{G_\al}$ is a constant function and  $\restr{g_\al}{G_\al}\neq\restr{f_\al}{G_\al}$. By Lemma \ref{l:glue}, $g_\al\in K_1(X,Y)$.

Fix a metric $d$ on $Y$ and, for $p\in Y^X$, a finite $K\subset X$ and $\de>0$, let $[p,K,\de]=\{q\in Y^X: d(q(x),p(x))<\de$ for $x\in K\}$. Note that $g_\al\restriction(X\setminus G_\al)=f_\al\restriction(X\setminus G_\al)$, $g_\al\ne f_\al$, and every finite subset of $X$ meets only finitely many $G_\al$.

We will use the following simple remark: for every finite $E\subset X$ and every $\xi\in Y^E$ there is $u\in K_1(X,Y)$ with $u\restriction E=\xi$. Indeed, take pairwise disjoint cozero sets $V_e\ni e$, $e\in E$, and put $u=\xi(e)$ on $V_e$ and $u=y_0$ on $X\setminus\bigcup_e V_e$ for some $y_0\in Y$; by Lemma \ref{l:glue}, $u\in K_1(X,Y)$.

{\bf Claim 1.} For every continuous map $h$ of $K_1(X,Y)$ into a separable metrizable space $M$ the set $\{\al<\om_1: h(f_\al)\neq h(g_\al)\}$ is countable.

\begin{proof}[Proof of Claim 1]
Fix a metric $\rho$ on $M$. Assume the contrary; then there are $\varepsilon>0$ and an uncountable $B_0\subset\om_1$ such that $\rho(h(f_\al),h(g_\al))\ge\varepsilon$ for $\al\in B_0$. For $\al\in B_0$ choose finite $K_\al,L_\al\subset X$ and $\de_\al>0$ such that $\rho(h(u),h(f_\al))<\varepsilon/3$ for $u\in[f_\al,K_\al,\de_\al]\cap K_1(X,Y)$ and $\rho(h(u),h(g_\al))<\varepsilon/3$ for $u\in[g_\al,L_\al,\de_\al]\cap K_1(X,Y)$.
By the $\De$-system lemma there are an uncountable $B_1\subset B_0$, a finite $R\subset X$ and $\de>0$ such that $\{K_\al\cup L_\al:\al\in B_1\}$ is a $\De$-system with root $R$ and $\de_\al\ge\de$ for $\al\in B_1$. Since $R$ meets only finitely many $G_\al$, we may assume that $R\cap G_\al=\es$, so $g_\al\restriction R=f_\al\restriction R$, for $\al\in B_1$. Since $M$ and $Y^R$ are separable metrizable, there is an uncountable $B\subset B_1$ such that the sets $\{h(f_\al):\al\in B\}$ and $\{h(g_\al):\al\in B\}$ have diameter $<\varepsilon/6$ and $d(f_\al(x),f_\beta(x))<\de$ for $\al,\beta\in B$ and $x\in R$.

Take $\al\ne\beta$ in $B$. Since $K_\al\cap L_\beta\subset R$, there is $u\in K_1(X,Y)$ with $u\restriction K_\al=f_\al\restriction K_\al$ and $u\restriction(L_\beta\setminus K_\al)=g_\beta\restriction(L_\beta\setminus K_\al)$. For $x\in K_\al\cap L_\beta\subset R$ we have $d(u(x),g_\beta(x))=d(f_\al(x),f_\beta(x))<\de\le\de_\beta$. Hence $u\in[f_\al,K_\al,\de_\al]\cap[g_\beta,L_\beta,\de_\beta]$, and
\[
\rho(h(f_\beta),h(g_\beta))\le \rho(h(f_\beta),h(f_\al))+\rho(h(f_\al),h(u))+\rho(h(u),h(g_\beta))<\varepsilon/6+\varepsilon/3+\varepsilon/3<\varepsilon,
\]
a contradiction.
\end{proof}

We say that $p\in Y^X$ is a {\it limit point} of an indexed family $\{p_\al:\al<\om_1\}\subset Y^X$ if every neighbourhood of $p$ contains $p_\al$ for infinitely many $\al$.

{\bf Claim 2.} The families $\{f_\al:\al<\om_1\}$ and $\{g_\al:\al<\om_1\}$ have the same limit points in~$Y^X$.

\begin{proof}[Proof of Claim 2]
Let $p\in Y^X$ and $W=[p,K,\de]$. The finite set $K$ meets $G_\al$ only for $\al$ in a finite set $S$, and for $\al\notin S$ we have $f_\al\restriction K=g_\al\restriction K$, so $f_\al\in W\iff g_\al\in W$. Hence $W$ contains $f_\al$ for infinitely many $\al$ iff it contains $g_\al$ for infinitely many $\al$.
\end{proof}

Since $\{f_\al\}$ is a closed discrete set of pairwise distinct functions, no point of $K_1(X,Y)$ is a limit point of $\{f_\al\}$. By Claim 2, no point of $K_1(X,Y)$ is a limit point of $\{g_\al\}$. Consequently, $\{g_\al:\al<\om_1\}$ is a closed discrete subset of $K_1(X,Y)$, and for every $q\in K_1(X,Y)$ the set $\{\al: g_\al=q\}$ is finite (otherwise $q$ would be a limit point of $\{g_\al\}$). Define
\[
\psi\colon\om_1\to\om_1+1,\quad \psi(\al)=\begin{cases}
\beta, & \text{if } g_\al=f_\beta \text{ for some } \beta\in\om_1,\\
\om_1, & \text{otherwise.}
\end{cases}
\]
This is well defined, since the $f_\beta$ are pairwise distinct, and $\psi(\al)\ne\al$, since $g_\al\ne f_\al$.

{\bf Claim 3.} $\psi^{-1}(\beta)$ is finite for every $\beta\in\om_1$.

\begin{proof}[Proof of Claim 3]
$\psi^{-1}(\beta)=\{\al: g_\al=f_\beta\}$, which is finite by the above.
\end{proof}

By Claim 3, the preimage under $\psi$ of a countable subset of $\om_1$ is countable. Choose by induction $\al_\xi<\om_1$, $\xi<\om_1$, such that
\[
\al_\xi\notin\{\al_\eta:\eta<\xi\}\cup\{\psi(\al_\eta):\eta<\xi\}\cup\psi^{-1}(\{\al_\eta:\eta<\xi\});
\]
this is possible since the excluded set is countable. Let $A=\{\al_\xi:\xi<\om_1\}$. Then $\psi(\al)\notin A$ for $\al\in A$. Hence the sets $F=\{f_\al:\al\in A\}$ and $G=\{g_\al:\al\in A\}$ are disjoint; they are closed in $K_1(X,Y)$ as subsets of closed discrete sets.

Since $K_1(X,Y)$ is weakly normal, there is a continuous map $h$ of $K_1(X,Y)$ onto a subspace $M$ of $\R^\om$ such that $h(F)\cap h(G)=\es$. In particular, $h(f_\al)\ne h(g_\al)$ for all $\al\in A$, which contradicts Claim 1, since $A$ is uncountable.

\end{proof}

\begin{corollary} \label{c:enba:1}
If $K_1(X,\ot)$ is normal then $e(K_1(X,\ot))\leq \om$.
\end{corollary}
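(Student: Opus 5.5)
This follows directly from Theorem \ref{t:en:1} applied with $Y=\ot$. The two-point discrete space $\ot$ is separable metrizable. So once we know that $K_1(X,\ot)$ is weakly normal, the theorem gives $e(K_1(X,\ot))\le\om$.

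The only step is to check that normality implies weak normality in the sense of the definition above, that is, over the class of subspaces of $\R^\om$. We do this as the paper already indicates. Let $A,B$ be disjoint closed subsets of the normal space $K_1(X,\ot)$. By Urysohn's lemma there is a continuous $f\colon K_1(X,\ot)\to[0,1]$ with $f(A)\subset\{0\}$ and $f(B)\subset\{1\}$. Put $P_{A,B}=f(K_1(X,\ot))$, which is a subspace of $[0,1]\subset\R\subset\R^\om$ (for instance, via $t\mapsto(t,0,0,\dots)$). Then $f$ maps $K_1(X,\ot)$ onto $P_{A,B}$ and $f(A)\cap f(B)=\es$. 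Hence $K_1(X,\ot)$ is weakly normal.

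There is no real obstacle here: all of the combinatorial work (the $\De$-system argument, the construction of the $g_\al$, and the choice of the uncountable set $A$) is contained in Theorem \ref{t:en:1}. The only points to confirm are that $\ot$ meets the hypothesis on $Y$ and that the Urysohn function realises the definition of weak normality. Both are immediate. Note also that the trivial cases where $X$ is countable are already handled inside the proof of Theorem \ref{t:en:1}, so no case split is needed.
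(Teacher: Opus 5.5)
Your proof is correct and takes the same route as the paper. There the corollary is left as the special case $Y=\ot$ of Theorem \ref{t:en:1}, with normality implying weak normality via an Urysohn function, as the paper notes right after the definition of weak normality.
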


\begin{proposition} \label{c:enba:2}
If $B_1(X)$ is normal then $e(B_1(X))\leq \om$.
\end{proposition}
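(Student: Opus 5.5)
This is a direct specialization of Theorem \ref{t:en:1}. The paper already notes that $B_1(X)=B_1(X,\R)=K_1(X,\R)$, since a real-valued function is Baire-one iff preimages of open sets are $Zer_\si$. Moreover, $\R$ is a separable metrizable space. So the plan is to apply Theorem \ref{t:en:1} with $Y=\R$.

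First, I check the hypothesis of Theorem \ref{t:en:1}, namely that $K_1(X,\R)$ is weakly normal. This follows from normality of $B_1(X)$, as observed right after the definition of weak normality. Given disjoint closed $A,B\subset B_1(X)$, Urysohn's lemma gives a continuous $f\colon B_1(X)\to[0,1]$ with $f(A)\subset\{0\}$ and $f(B)\subset\{1\}$. Its image $P=f(B_1(X))$ is a subspace of $[0,1]\subset\R\subset\R^\om$. The map $f\colon B_1(X)\to P$ is onto and satisfies $f(A)\cap f(B)=\es$. Hence $B_1(X)$ is weakly normal over $\R^\om$.

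Then Theorem \ref{t:en:1} yields $e(K_1(X,\R))\le\om$, that is, $e(B_1(X))\le\om$.

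There is no real obstacle here. The only points requiring care are the identification $B_1(X)=K_1(X,\R)$ as topological spaces (both carry the topology of pointwise convergence inherited from $\R^X$) and the trivial observation that normal implies weakly normal. All the substantive work is contained in Theorem \ref{t:en:1}.
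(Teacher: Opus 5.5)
Your argument is correct, but it is not the route the paper takes for this proposition. The paper deliberately proves it independently of Theorem \ref{t:en:1}, following the classical $C_p$-argument (Theorem I.5.12 in Arhangel'skii's book). There, $B_1(X)$ is homeomorphic to $B_1(X,(0,1))$, which is a dense convex subset of $[0,1]^X$, so Reznichenko's theorem upgrades normality of $B_1(X)$ to collectionwise normality. Density in $\R^X$ gives countable Souslin number. A collectionwise normal space with countable Souslin number has countable extent, since an uncountable closed discrete set would yield an uncountable discrete, hence disjoint, family of nonempty open sets.

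You instead specialize Theorem \ref{t:en:1} to $Y=\R$, using the identification $B_1(X)=K_1(X,\R)$ and the trivial implication normal $\Rightarrow$ weakly normal; all the substance then sits in the $\Delta$-system argument of that theorem (Claims 1--3). Your route stays inside the paper and needs only weak normality. It also does not use convexity, so the same reasoning covers $K_1(X,Y)$ for any separable metrizable $Y$. This includes $K_1(X,\ot)$ (Corollary \ref{c:enba:1}), where Reznichenko's theorem does not apply.

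The paper's route relies on external machinery, but it gives the stronger by-product that a normal $B_1(X)$ is collectionwise normal. It also keeps this proposition independent of the more delicate Theorem \ref{t:en:1}.
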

\begin{proof} Let $B_1(X)$ be a normal space. Since $B_1(X)$ is homeomorphic to $B_1(X,(0,1))$ and $B_1(X,(0,1))$ is a convex everywhere dense subset of $[0,1]^X$, $B_1(X)$ is collectionwise normal by Reznichenko's theorem \cite{rezn90} (see also \cite[Theorem I.5.8]{arch}).
Since $B_1(X)$ is dense in $\R^X$, its Souslin number is countable, and by Lemma I.5.11(b) in \cite{arch}, $e(B_1(X))\leq \om$.
\end{proof}
Note that Proposition \ref{c:enba:2} does not depend on Theorem \ref{t:en:1}; its proof is the same as that of Theorem I.5.12 in \cite{arch} for $C_p(X)$.

\begin{proposition}\label{p:en:3}
If $e(B_1(X))< \mathfrak c$ then $X$ is an $nCM$-space.
\end{proposition}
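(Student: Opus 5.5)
We prove the contrapositive: if $X$ is not an $nCM$-space, then $e(B_1(X))\ge\mathfrak c$. The whole argument is assembling earlier results; essentially no new construction is needed.

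Assume $X$ is not an $nCM$-space, so there is a continuous surjection $g\colon X\to[0,1]$. By Remark \ref{r:cantor}, implication (a)$\rarr$(b), this is equivalent to the existence of some $f\in C(X)$ whose image $f(X)$ contains a subset $H$ homeomorphic to the Cantor set $\ot^\om$. Concretely, we can take $f=g$ and $H$ the standard middle-thirds Cantor set inside $[0,1]=g(X)$.

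Next we apply Lemma \ref{l:cantor:2} to this $f$. It gives a closed discrete subset of $B_1(X)$ of cardinality $\mathfrak c$. Recall how that subset arises. The family $D=\{f_y: y\in\ot^\om\}$ of Lemma \ref{l:cantor:1} is transported via the embedding $T$, built from $Z=f^{-1}(H)$ and $p=c\circ(f\restriction Z)$. The image $T(D)$ is closed and discrete already in $\R^X$. Since $T(D)$ lies inside $B_1(X)$, it is closed and discrete in $B_1(X)$ as well.

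Therefore $e(B_1(X))\ge|T(D)|=\mathfrak c$, contradicting the hypothesis $e(B_1(X))<\mathfrak c$. Hence $X$ is an $nCM$-space.

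There is no substantial obstacle here. The only point worth checking is that closed discreteness in the ambient space $\R^X$ passes to the subspace $B_1(X)$, which is immediate, and that the equivalences in Remark \ref{r:cantor} are being used in the correct direction.
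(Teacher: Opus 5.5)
Your proof is correct and follows essentially the same route as the paper: a continuous surjection $X\to[0,1]$ yields a Cantor set inside $f(X)$, and Lemma~\ref{l:cantor:2} then supplies a closed discrete subset of $B_1(X)$ of cardinality $\mathfrak c$. One cosmetic point: the final step is a contradiction with the hypothesis $e(B_1(X))<\mathfrak c$, so the argument is by contradiction rather than a proof of the contrapositive as you announced. This does not affect its validity.
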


\begin{proof} If $X$ is not an $nCM$-space, then $f(X)=[0,1]$ for some $f\in C(X)$, and $e(B_1(X))\ge\mathfrak c$ by Lemma \ref{l:cantor:2}.
\end{proof}

\begin{corollary}\label{c:norm:kscat}
If $B_1(X)$ is normal, then $X$ is an $nCM$-space (by Propositions \ref{c:enba:2} and \ref{p:en:3}); in particular every continuous image of $X$ is $k$-scattered {\rm (see the proof of Theorem \ref{t:pr:1}(4))}.
\end{corollary}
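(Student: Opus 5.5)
The corollary has two parts, and the plan is to chain earlier results for each. \emph{Step 1 (countable extent).} Assume $B_1(X)$ is normal. Proposition \ref{c:enba:2} then gives $e(B_1(X))\le\om$. \emph{Step 2 ($nCM$).} Since $\om<\mathfrak c$, we have $e(B_1(X))<\mathfrak c$, so Proposition \ref{p:en:3} shows that $X$ is an $nCM$-space. In contrapositive form: if $X$ mapped continuously onto $[0,1]$, Lemma \ref{l:cantor:2} would give a closed discrete subset of $B_1(X)$ of size $\mathfrak c$, contradicting Step 1.

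\emph{Step 3 ($k$-scattered images).} Let $g\colon X\to Y$ be a continuous surjection, and suppose $K\subset Y$ is a compact set that is not scattered. As in the proof of Theorem \ref{t:pr:1}(4), a non-scattered compact space maps continuously onto $[0,1]$. To extend this map to all of $Y$, use that $K$ is compact, hence $C^*$-embedded in $\beta Y$ by Tietze applied in the normal space $\beta Y$, and therefore in $Y$. This produces a continuous $q\colon Y\to[0,1]$ with $q(K)=[0,1]$. Then $q\circ g$ maps $X$ onto $[0,1]$, contradicting Step 2. Hence every continuous image of $X$ is $k$-scattered.

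All the real work is in the earlier results; here the only point needing care is Step 3. There one must check two things: that a non-scattered compactum maps onto $[0,1]$ (it contains a crowded compact subset that maps onto $\ot^\om$, which in turn maps onto $[0,1]$), and that the map extends from $K$ to $Y$, as justified above.
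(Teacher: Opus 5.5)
Your proposal is correct and follows the paper's route: normality gives $e(B_1(X))\le\om$ by Proposition \ref{c:enba:2}, so $X$ is an $nCM$-space by Proposition \ref{p:en:3}, and the $k$-scatteredness of continuous images comes from the same extend-to-$Y$-and-compose argument as in Theorem \ref{t:pr:1}(4). The fact that a non-scattered compactum maps onto $[0,1]$ is standard, and the paper also uses it without proof; note only that the crowded kernel itself need not map onto $\ot^\om$ (take $[0,1]$), so your parenthetical must be read as producing some compact subset that does, for example via a Cantor scheme.
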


For a cardinal $\tau$, let $L(\tau)$ denote the one-point Lindel\"ofication of a discrete set of cardinality $\tau$. We have $L(\tau)=\tau+1=\tau\cup\{\tau\}$, $\tau$ is the only non-isolated point of $L(\tau)$, and neighborhoods of $\tau$ are the sets of the form $L(\tau)\setminus C$, where $C\subset \tau$, $|C|\leq\om$.

\begin{example}\label{exa:norm:1}
Let $X$ be the set of all ordinal numbers $\leq \omega_1$ with the order topology. Then $B_1(X)$ is not Lindel\"{o}f, but it is a normal space and $e(B_1(X))\leq \om$.
\end{example}
\begin{proof}
Note that $B_1(X)=C_p(L({\omega_1}))$ (Corollary 1.3 in \cite{ggk}).
It is widely known that $C_p(L({\omega_1}))$ is homeomorphic to the $\Sigma$-product $\Sigma=\{f\in \R^{\omega_1}: |\omega_1\setminus f^{-1}(0)|\leq\om\}$ in $\R^{\omega_1}$: the mapping $C_p(L({\omega_1}))\to \Sigma\times \R$, $f\mapsto ((f-f(\omega_1))\restriction\omega_1,f(\omega_1))$ is a homeomorphism and, obviously, $\Sigma\times \R$ is homeomorphic to $\Sigma$.
The $\Sigma$-product $\Sigma$ is normal and is not Lindel\"{o}f (see \cite{arch,corson}).
That the extent of $\Sigma$ is countable is well known; it also follows from Proposition \ref{c:enba:2}.
\end{proof}

The following statement was proved by V.V.~Uspenskii in his dissertation \cite{Usp-disser} and was not published in print.

\begin{example}\label{exa:norm:2}
Let $X$ be the set of all countable ordinal numbers with the order topology. Then $B_1(X)$ is not normal and $e(B_1(X))\leq \om$.
\end{example}
\begin{proof}
Every Baire-one function on $\om_1$ or on $\om_1+1$ is eventually constant (a $Zer_\si$-set is either bounded or contains a final segment), and conversely. Hence
$B_1(X)=C_p(\omega_1|L({\omega_1}))$, where $C_p(\om_1|L(\om_1))=\{f\restriction\om_1: f\in C_p(L(\om_1))\}\subset\R^{\om_1}$.
The restriction map $B_1(\om_1+1)\to B_1(\om_1)$ is a continuous bijection, and the preimage of a closed discrete set under a continuous bijection is closed and discrete. Hence, by Example \ref{exa:norm:1}, $e(B_1(X))\le e(B_1(\om_1+1))\leq \om$.

Consider a closed subset $Y=C_p(\omega_1|L({\omega_1}),\ot)$ of $C_p(\omega_1|L({\omega_1}))$. We claim that $Y$ is not normal.
Note that $Y$ is homeomorphic to $\Sigma(0)\cup \Sigma(1)$ where $\Sigma(i)$ is the $\Sigma$-product of $\{D_{\alpha}: D_{\alpha}=\ot, \alpha<\omega_1\}$ about the point ${\bf i}\in \ot^{\omega_1}$. Let $\omega_1=X_1\sqcup X_2$ be  a partition of $\omega_1$ such that $|X_1|=|X_2|=\omega_1$. Consider $F_1=\{f\in Y: f(x)=0$ for any $x\in X_1\}$ and $F_2=\{f\in Y: f(x)=1$ for any $x\in X_2\}$.
Then $F_1\subset \Sigma(0)$ and $F_2\subset \Sigma(1)$ and $F_1$ and $F_2$ are disjoint closed subsets of $Y$.

Suppose that $\varphi\colon Y\to[0,1]$ is continuous, $\varphi(F_1)=\{0\}$ and $\varphi(F_2)=\{1\}$. Every continuous function on the $\Si$-product $\Si(0)$ depends on countably many coordinates (\cite{Eng}, 2.7.14 (a)): there is a countable $C\subset\om_1$ such that $\varphi(x)=\varphi(x_C)$ for $x\in\Si(0)$, where $x_C=x$ on $C$ and $x_C=0$ on $\om_1\setminus C$.
Let $v\in\ot^{\om_1}$, $v=0$ on $C\cap X_1$ and $v=1$ elsewhere. Then $v\in F_2$. For finite $K\subset\om_1$ let $v_K=v$ on $K$ and $v_K=0$ off $K$; then $v_K\in\Si(0)$ and $v_K\to v$, so $\varphi(v)=\lim_K\varphi(v_K)=\lim_K\varphi((v_K)_C)=\varphi(v_C)$, since $(v_K)_C\to v_C$ in $\Si(0)$. But $u=v_C$ vanishes on $X_1$, so $u\in F_1$ and $\varphi(u)=0\ne1=\varphi(v)$, a contradiction. Thus $F_1$ and $F_2$ are not functionally separated, and $Y$, hence $B_1(X)$, is not normal.
\end{proof}

\begin{remark}\label{r:teng}
Let $X$ be a fortissimo space of cardinality $\om_1$ with the particular point $p$ and $Y=X\setminus\{p\}$. Then $X$ is homeomorphic to $L(\om_1)$ and $C_p(Y|X)=C_p(\om_1|L(\om_1))=B_1(\om_1)$, which is not normal by Example \ref{exa:norm:2}. This answers Teng's question (L41) in the negative.
The $\Sigma$-product of any family of real lines is normal \cite{corson} (see also \cite{KomMal}). Since $C_p(\omega_1|L({\omega_1}))$ is not normal, it is not homeomorphic to the $\Sigma$-product of $\om_1$-many real lines. Thus the answer to Teng's question (L42) is also negative.
The same argument, with $\om_1$ replaced by an arbitrary uncountable cardinal $\kappa$, shows that $C_p(Y|X)$ is not normal for a fortissimo space $X$ of any uncountable cardinality: $C_p(Y|X)$ consists of the functions on $\kappa$ that are constant outside a countable set, its closed subspace of $\ot$-valued functions is $\Si(0)\cup\Si(1)\subset\ot^\kappa$, and the proof of Example \ref{exa:norm:2} goes through verbatim.
\end{remark}

By Corollary \ref{c:norm:kscat}, if $B_1(X)$ is normal then $X$ is $k$-scattered.

Recall that a space $X$ is {\it $K$-analytic} if $X$ is a continuous image of a Lindel\"{o}f \v{C}ech-complete space.

By \cite{BBT}, a (Tychonoff) $K$-analytic space $X$ is $k$-scattered if and only if no continuous map $f:X\rightarrow [0,1]$ is surjective if and only if $X$ is functionally countable, i.e. the image $f(X)$ is countable for every $f\in C(X,\mathbb{R})$.

By Pestryakov's result (Proposition \ref{pr1} (5)) we get the following

\begin{corollary}
For a $K$-analytic space $X$ the space $B_1(X)$ is Lindel\"{o}f if and only if $X$ is countable.
\end{corollary}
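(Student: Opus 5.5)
The statement has two directions, and the reverse one is immediate. If $X$ is countable, then $\R^X$ is second countable. Hence its subspace $B_1(X)$ is second countable and regular, and so it is Lindel\"of. Nothing about $K$-analyticity is used here.

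For the forward direction, assume $X$ is $K$-analytic and $B_1(X)$ is Lindel\"of. The plan is to chain three earlier facts.

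\begin{enumerate}
\item By Proposition \ref{pr1}(3), for every $f\in C(X)$ the image $f(X)$ contains no uncountable compact subspace. By Remark \ref{r:cantor} this means $X$ is an $nCM$-space: no continuous $f\colon X\to[0,1]$ is surjective. Alternatively, Theorem \ref{t:pr:1}(4) gives directly that $X$ itself is $k$-scattered.
\item Now invoke the cited result of \cite{BBT}. For a Tychonoff $K$-analytic space, being $k$-scattered, admitting no continuous surjection onto $[0,1]$, and being functionally countable are equivalent. Either form of the conclusion of step 1 therefore gives that $X$ is functionally countable.
\item Proposition \ref{pr1}(5) then applies: a functionally countable $X$ with $B_1(X)$ Lindel\"of is countable. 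This completes the forward direction.
\end{enumerate}

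The only nontrivial ingredient is the \cite{BBT} equivalence, which is assumed. The one point to check is that the hypotheses match. The paper assumes all spaces are Tychonoff, so $X$ is a Tychonoff $K$-analytic space. The property produced in step 1, being $nCM$ or $k$-scattered, is exactly one of the equivalent conditions in \cite{BBT}. No further obstacle is expected.
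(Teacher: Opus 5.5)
Your proposal is correct and is essentially the paper's proof. The forward direction is the same chain: Proposition~\ref{pr1}(3) makes $X$ an $nCM$-space, the cited equivalence for $K$-analytic spaces makes it functionally countable, and Proposition~\ref{pr1}(5) makes it countable. The only harmless difference is in the converse, where you use that $\R^X$ is second countable for countable $X$, while the paper observes directly that $B_1(X)=\R^X$.
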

\begin{proof}
If $X$ is countable, then every point is a zero set, every subset of $X$ is $Zer_\si$, and $B_1(X)=\R^X$ is Lindel\"of. If $B_1(X)$ is Lindel\"of, then $X$ is an $nCM$-space by Proposition \ref{pr1}(3), hence functionally countable by \cite{BBT}, hence countable by Proposition \ref{pr1}(5).
\end{proof}

\begin{theorem}\label{t:sp:n:1}
Let $K$ be a Cantor discontinuum. Then $e(K_1(K, \ot))=\mathfrak{c}$.
\end{theorem}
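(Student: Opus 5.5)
We need both inequalities: $e(K_1(K,\ot))\le\mathfrak c$ and $e(K_1(K,\ot))\ge\mathfrak c$.

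The upper bound is a cardinality count. An element of $K_1(K,\ot)$ is the indicator of a $CZ$-subset of $K$. Every $CZ$-set in the compact metrizable $K$ is in particular $F_\si$. Since $K$ has only $\mathfrak c$ closed subsets, it has at most $\mathfrak c$ $F_\si$-subsets. Hence $|K_1(K,\ot)|\le\mathfrak c$, and so every closed discrete subset has cardinality at most $\mathfrak c$.

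For the lower bound I would mimic Lemma \ref{l:cantor:1}, replacing the $\om$-valued functions $f_y$ by $\ot$-valued ones. The function $f_y$ separated points by the value $\De(x,y)+1$. That value cannot be coded into a single $\ot$-valued function on $\ot^\om$, so I would code it inside a product.

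Identify $K$ with $\ot^\om\times\ot^\om$, which is again a Cantor set. For $y\in\ot^\om$ define $A_y\subset K$ by
\[
(x,z)\in A_y \iff x\neq y \text{ and } z\in C_{\De(x,y)},
\]
where the sets $C_n\subset\ot^\om$ are fixed pairwise disjoint nonempty clopen sets (for example $C_n=\{z: z(0)=\dots=z(n-1)=1,\ z(n)=0\}$). Let $g_y=\chi_{A_y}$ and $D=\{g_y:y\in\ot^\om\}$.

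I would check the following three properties, in this order.

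\emph{$g_y\in K_1(K,\ot)$.} Each set $\{x:\De(x,y)=n\}\times C_n$ is clopen, so $A_y$ is a countable union of clopen sets. Its complement is $\{y\}\times\ot^\om$ together with a countable union of clopen sets. Thus both $A_y$ and its complement are $F_\si$, so $A_y$ is a $CZ$-set.

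\emph{$D$ is discrete.} This needs a finite set of coordinates on which $g_y$ differs from every $g_{y'}$ with $y'\neq y$. I do not expect a single point to work. Instead I would use points of $\{y\}\times\ot^\om$, where $g_y=0$. For $y'\ne y$ with $\De(y,y')=n$, we have $g_{y'}(y,z)=1$ exactly for $z\in C_n$, and there are infinitely many values of $n$. So a finite set can rule out only finitely many levels $n$.

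\emph{Closedness.} A limit of $g_{y_\la}$ with $y_\la\to y^*$, $y_\la\neq y^*$, should be forced outside $K_1(K,\ot)$, or should not exist.

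Both the discreteness and the closedness checks look problematic with this coding. This is the main obstacle. A zero set $W$ of $\ot^K$ of the form $\{g: g\restriction S=\chi\restriction S\}$ is not open, so it cannot replace the neighbourhoods used in Lemma \ref{l:cantor:1}. Since $\ot^K$ is compact, $D$ must instead be arranged so that all of its accumulation points in $\ot^K$ lie outside $K_1(K,\ot)$.

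So I would modify the construction so that the coding itself forces this. As $y_\la\to y^*$, the functions $g_{y_\la}$ should converge on $\{y^*\}\times\ot^\om$ to a pattern that is not $F_\si\cap G_\de$, for instance alternating so that the limit set has dense and codense parts on a perfect set, which is impossible for a $CZ$-set in a Baire space (Theorem \ref{t:rho2} style). Concretely, I would try $C_n$ ranging over a sequence of clopen sets that converges pointwise to the indicator of a set like $\{z: z \text{ eventually } 0\}$. That set is dense and codense in $\ot^\om$, hence not $CZ$ by the Baire category theorem. For discreteness I would choose, for each $y$, a single point $(y,z_y)$ with $z_y$ outside all $C_n$ and arrange via a second coordinate that $g_{y'}(y,z_y)=1$ for all $y'\neq y$.

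Balancing these two requirements is the step I expect to be hardest.
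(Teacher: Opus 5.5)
Your upper bound is fine and matches the paper: a $CZ$-set is $F_\si$ (indeed Borel), so $|K_1(K,\ot)|\le\mathfrak c$. The lower bound, however, is not established, and the repair you sketch cannot work.

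With pairwise disjoint $C_n$, every $z$ lies in at most one $C_n$, so $\chi_{C_n}\to 0$ pointwise. Take $y_k$ with $\De(y^*,y_k)=k$. Off $\{y^*\}\times\ot^\om$ the values of $g_{y_k}$ stabilize to those of $g_{y^*}$. On $\{y^*\}\times\ot^\om$ we have $g_{y_k}(y^*,z)=\chi_{C_k}(z)\to 0$. Hence $g_{y_k}\to g_{y^*}$, and $D$ is not discrete.

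Your proposed fix is to choose clopen $C_n$ with $\chi_{C_n}$ converging pointwise to the indicator of a non-$CZ$ set such as $\{z: z\text{ eventually }0\}$. This is impossible: a pointwise limit of continuous $\ot$-valued functions is Baire-one, so its support is automatically $F_\si$ and $G_\de$. Worse, suppose $\chi_{C_n}\to\chi_E$ pointwise for any $E$. The same computation shows that $g_{y_k}$ converges to the function equal to $g_{y^*}$ off $\{y^*\}\times\ot^\om$ and to $\chi_E$ on it. This limit lies in $K_1(K,\ot)$ by Lemma \ref{l:glue}. It differs from every $g_{y_k}$: it equals $1$ on $\{y_k\}\times C_k$, where $g_{y_k}$ vanishes. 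So $D$ is never closed discrete. Thus no coding in which $(\chi_{C_n})$ converges can succeed; the bad accumulation points can only arise along nets. Your discreteness device is also backwards: if $z_y\notin\bigcup_n C_n$, then $g_{y'}(y,z_y)=0=g_y(y,z_y)$ for every $y'$.

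\emph{The missing idea.} Take the coordinate sets $C_n=\{t\in\ot^\om: t(n)=1\}$, which are neither disjoint nor convergent. This is the paper's choice: $G_y(x,t)=t(\De(x,y))$ for $x\neq y$, and $G_y=0$ on $\{y\}\times\ot^\om$.

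Discreteness follows at once from the common point $\bar 1\in\bigcap_n C_n$: $G_y(y,\bar 1)=0$, while $G_{y'}(y,\bar 1)=1$ for every $y'\neq y$.

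For closedness, let $G_{y_\lambda}\to g\in K_1(K,\ot)$ and pass to a subnet with $y_\lambda\to y^*$. Either $y_\lambda=y^*$ cofinally and $g=G_{y^*}$, or $n_\lambda=\De(y^*,y_\lambda)\to\infty$. In the second case $g(y^*,t)=\lim_\lambda t(n_\lambda)$ for every $t$. Then $\mathcal{U}=\{A\subset\om: n_\lambda\in A\text{ eventually}\}$ is a nonprincipal ultrafilter, and $t\mapsto g(y^*,t)$ is its indicator. The restriction of $g$ to the closed set $\{y^*\}\times\ot^\om$ is still in $K_1$, so $\mathcal{U}$ would be a Borel subset of $\ot^\om$. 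This contradicts Sierpi\'nski's theorem that nonprincipal ultrafilters lack the Baire property.

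The independence of the sets $C_n$ is exactly what forces every cluster point of $(\chi_{C_n})$ at infinity to be an ultrafilter, hence non-Borel.
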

\begin{proof}

Since every $CZ$-subset of $K$ is Borel, $|K_1(K,\ot)|\le\mathfrak c$. We identify $K$ with $\ot^\om\times\ot^\om$. For $y\in\ot^\om$ define $G_y\colon K\to\ot$ by $G_y(y,t)=0$ and $G_y(x,t)=t(\De(x,y))$ for $x\ne y$. Then $G_y^{-1}(1)=\bigcup_n\{x:\De(x,y)=n\}\times\{t:t(n)=1\}$ is open, so $G_y\in K_1(K,\ot)$. Let $\bar 1\in\ot^\om$ be the constant $1$. Since $G_y(y,\bar 1)=0$ and $G_z(y,\bar1)=1$ for $z\ne y$, the set $D=\{G_y:y\in\ot^\om\}$ is discrete and $|D|=\mathfrak c$.

Let $g\in K_1(K,\ot)$ and $G_{y_\la}\to g$. We may assume $y_\la\to y^*$. If $y_\la=y^*$ cofinally, then $g=G_{y^*}$. Otherwise, passing to a subnet, $y_\la\ne y^*$ and $n_\la=\De(y^*,y_\la)\to\infty$, and $g(y^*,t)=\lim_\la t(n_\la)$ for all $t$. Then $\U=\{A\subset\om:n_\la\in A$ eventually$\}$ is a nonprincipal ultrafilter, and $t\mapsto g(y^*,t)$ is the indicator of $\U\subset\ot^\om$ (subsets of $\om$ identified with their indicators). As $g\restriction(\{y^*\}\times\ot^\om)$ is Baire-one, $\U$ would be a Borel subset of $\ot^\om$; but a nonprincipal ultrafilter does not have the Baire property (Sierpi\'nski, see \cite{Kechris}). Hence $D$ is closed in $K_1(K,\ot)$, and $e(K_1(K,\ot))=\mathfrak c$.

\end{proof}

\begin{theorem}\label{t:bern}
Let $X$ be a Bernstein set. Then $e(B_1(X))=\mathfrak c$ and $B_1(X)$ is not normal.
\end{theorem}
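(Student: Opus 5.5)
The plan has two parts, one for the extent and one for non-normality.

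\emph{Extent.} By Corollary \ref{c:ncm:1}, a Bernstein set $X\subset\R$ is not an $nCM$-space. So there is a continuous $f\colon X\to[0,1]$ with $f(X)=[0,1]$, and $f(X)$ contains a Cantor set. Lemma \ref{l:cantor:2} then gives a closed discrete subset of $B_1(X)$ of cardinality $\mathfrak c$, hence $e(B_1(X))\ge\mathfrak c$.

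For the reverse inequality I will bound the cardinality of $B_1(X)$. Since $X$ is separable metrizable, $|C(X)|\le\mathfrak c$, so there are at most $\mathfrak c$ zero sets. Hence there are at most $\mathfrak c$ $Zer_\si$-sets. A Baire-one function is determined by the preimages of the countably many rational intervals, so $|B_1(X)|\le\mathfrak c^{\aleph_0}=\mathfrak c$. This gives $e(B_1(X))=\mathfrak c$.

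\emph{Non-normality.} I argue by contraposition. If $B_1(X)$ were normal, Proposition \ref{c:enba:2} would give $e(B_1(X))\le\om$, contradicting $e(B_1(X))=\mathfrak c>\om$. Equivalently, Corollary \ref{c:norm:kscat} says normality forces $X$ to be an $nCM$-space, which fails by Corollary \ref{c:ncm:1}. Either route finishes the proof.

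\emph{Main obstacle.} The only nontrivial input is Corollary \ref{c:ncm:1}, that a Bernstein set maps continuously onto $[0,1]$. It is already established in the paper via Theorem \ref{t:ncm:1}, so every remaining step is a direct citation.
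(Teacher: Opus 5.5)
Your proof is correct and follows the paper's argument. The lower bound $e(B_1(X))\ge\mathfrak c$ comes from Corollary \ref{c:ncm:1} together with Lemma \ref{l:cantor:2}, which is exactly the content of Proposition \ref{p:en:3}. The upper bound is the cardinality count $|B_1(X)|\le\mathfrak c$: you count $Zer_\sigma$-sets via $|C(X)|\le\mathfrak c$ where the paper counts $F_\sigma$-sets, which is the same thing in a metrizable space. Non-normality then follows from Proposition \ref{c:enba:2}.
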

\begin{proof}
Corollary \ref{c:ncm:1} implies that $X$ is not an $nCM$-space.
Proposition \ref{p:en:3} implies that $e(B_1(X))\geq \mathfrak c$. On the other hand, $X$ is separable metrizable, so $X$ has at most $\mathfrak c$ open sets and hence at most $\mathfrak c$ $F_\si$-subsets; a Baire-one function is determined by the preimages of rational open intervals, which are $F_\si$-sets, so $|B_1(X)|\le\mathfrak c$ and $e(B_1(X))\le\mathfrak c$.
Proposition \ref{c:enba:2} implies that $B_1(X)$ is not normal.
\end{proof}

Since any Bernstein set of the real line is a hereditarily Baire space   (a non-hereditarily Baire separable metrizable space contains a closed copy $Q$ of the rationals (Hurewicz); then $\overline Q^{\R}\setminus Q$ is an uncountable $G_\de$-subset of $\R$ disjoint from $X$, which is impossible), we get the following corollary.

\begin{corollary}\label{c:bern}
For any  Bernstein set $X\subset \R$ the space $B_1(X)$ is realcompact, but not normal.
\end{corollary}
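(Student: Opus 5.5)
The corollary combines two facts: realcompactness comes from Corollary \ref{t:rc:1}, and non-normality comes from Theorem \ref{t:bern}. The latter already gives, for every Bernstein set $X$, that $e(B_1(X))=\mathfrak c$ and that $B_1(X)$ is not normal. So the only remaining task is realcompactness. For that I will show that a Bernstein set $X\subset\R$ is a hereditarily Baire separable metrizable space. Such a space is a $whb$-space (take $F_1=F$, $F_2=\es$), so Corollary \ref{t:rc:1} applies.

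To prove that $X$ is hereditarily Baire, I would argue by contradiction using Hurewicz's theorem. A separable metrizable space that is not hereditarily Baire contains a closed subset homeomorphic to $\Q$. Let $Q\subset X$ be such a closed copy of the rationals. Since $Q$ is closed in $X$, we have $\overline Q^{\R}\cap X=Q$, so the set $G=\overline Q^{\R}\setminus Q$ is disjoint from $X$. Moreover $G$ is $G_\de$ in $\R$, because it is a closed set minus a countable set.

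The main step is checking that $G$ is uncountable. The set $C=\overline Q^{\R}$ is a closed subset of $\R$, hence Polish. The set $Q$ is countable and dense in $C$ and has no isolated points (it is homeomorphic to $\Q$), so $C$ is crowded. If $G$ were countable, then $C=Q\cup G$ would be a countable crowded Polish space. This contradicts the Baire category theorem, since $C$ would be a countable union of nowhere dense singletons.

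An uncountable $G_\de$ subset of $\R$ is Polish and therefore contains a Cantor set. That Cantor set lies in $\R\setminus X$, which is impossible for a Bernstein set. Hence $X$ is hereditarily Baire, and Corollary \ref{t:rc:1} gives that $B_1(X)$ is realcompact. Together with Theorem \ref{t:bern}, this proves the corollary.
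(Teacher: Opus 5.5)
Your proposal is correct and follows essentially the same route as the paper. You prove that $X$ is hereditarily Baire via Hurewicz's closed copy $Q$ of the rationals and the uncountable $G_\de$-set $\overline Q^{\R}\setminus Q$ disjoint from $X$, then apply Corollary~\ref{t:rc:1} for realcompactness and Theorem~\ref{t:bern} for non-normality; your Baire-category argument for the uncountability of $\overline Q^{\R}\setminus Q$ fills in a step the paper states without proof.
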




\end{document}